\documentclass[dvipdfmx,English,11pt]{amsart}

\usepackage{tikz}
\usepackage[dvipdfmx]{hyperref}
\usepackage{amsmath,amsthm,amssymb,amsfonts}
\usepackage{bm}
\usepackage{latexsym}
\usepackage[all]{xy}
\usepackage{enumerate}
\usepackage{tcolorbox}
\usepackage{xcolor}
\usepackage{mathrsfs}
\usepackage{docmute}
\usepackage{comment}
\usepackage{nameref}
\usepackage{zref-xr}
\usepackage{graphicx}
\usepackage{here}
\usepackage{adjustbox}
\usepackage{dynkin-diagrams}
\usepackage{mathdots}
\hypersetup{
   colorlinks=true,
   citecolor=blue,
   linkcolor=blue,
   urlcolor=blue,
}
\tcbuselibrary{breakable, skins, theorems}
\usetikzlibrary{cd}

\theoremstyle{plain}
\newtheorem{thm}{Theorem}[section]
\newtheorem{prop}[thm]{Proposition}
\newtheorem{cor}[thm]{Corollary}
\newtheorem{lem}[thm]{Lemma}

\theoremstyle{definition}
\newtheorem{dfn}[thm]{Definition}

\theoremstyle{remark}
\newtheorem{rem}[thm]{Remark}
\newtheorem{ex}[thm]{Example}

\DeclareMathOperator{\Hom}{Hom}
\DeclareMathOperator{\End}{End}

\DeclareMathOperator{\gll}{\mathfrak{gl}}

\DeclareMathOperator{\qq}{\mathfrak{q}}
\DeclareMathOperator{\sqq}{\mathfrak{sq}}

\DeclareMathOperator{\diag}{diag}

\let\Im\relax
\DeclareMathOperator{\Im}{Im}

\DeclareMathOperator{\Ind}{Ind}
\DeclareMathOperator{\Res}{Res}
\DeclareMathOperator{\Coind}{Coind}
\DeclareMathOperator{\ch}{ch}

\DeclareMathOperator{\id}{id}
\DeclareMathOperator{\Ob}{Ob}

\DeclareMathOperator{\Par}{par}
\DeclareMathOperator{\HOM}{HOM}
\DeclareMathOperator{\END}{END}

\DeclareMathOperator{\hd}{hd}
\DeclareMathOperator{\soc}{soc}
\DeclareMathOperator{\height}{ht}

\newcommand{\gMod}{\text{-}\mathsf{gMod}}
\newcommand{\gmod}{\text{-}\mathsf{gmod}}

\newcommand{\sMod}{\text{-}\mathsf{sMod}}
\newcommand{\smod}{\text{-}\mathsf{smod}}
\newcommand{\Mods}{\mathsf{sMod}\text{-}}
\newcommand{\mods}{\mathsf{smod}\text{-}}
\newcommand{\sgMod}{\text{-}\mathsf{sgMod}}
\newcommand{\sgmod}{\text{-}\mathsf{sgmod}}
\newcommand{\sVect}{\text{-}\mathsf{sVect}}

\newcommand{\sAlg}{\text{-}\mathsf{sAlg}}

\renewcommand{\c}{\mathsf{c}}

\newcommand{\Triv}{\mathsf{triv}}

\begin{document}

\title{Quiver Hecke--Clifford superalgebras and R-matrices}
\author{Koreto Endo}
\address{Department of Mathematics, Institute of Science Tokyo, 2-12-1 Ookayama, Meguro-ku, Tokyo, 152-8551, Japan}
\email{endo.k.d6a6@m.isct.ac.jp}
\date{\today}
\maketitle

\begin{abstract}
   In this paper, we develop the foundations of the representation theory of quiver Hecke--Clifford superalgebras.
   We further construct a Schur--Weyl duality between quantum affine analogues of the queer Lie superalgebra and the quiver Hecke--Clifford superalgebra of type $A_{\infty}$, based on the construction due to Kwon--Lee.
\end{abstract}

\section{Introduction}

\subsection{Lie superalgebras and quantum groups of type \(Q\)}

The queer Lie superalgebra \(\qq_n\) is a distinctive object in the theory of Lie superalgebras over \(\mathbb{C}\).
According to Kac's classification \cite{K2} of simple Lie superalgebras, the subquotients of \(\qq_n\) yield the family of simple Lie superalgebras of type \(Q\).
A special feature of the representation theory of \(\qq_n\) arises from the fact that its Cartan subalgebra \(\mathfrak{h} \subset \qq_n\) contains odd elements.
Despite not belonging to the family of Kac--Moody Lie superalgebras, simple Lie superalgebras of type \(Q\) have a representation theory that bears a strong resemblance to that of the general linear Lie superalgebra \(\gll_{m|n}\).
The finite-dimensional irreducible representations of \(\qq_n\) are weight modules with respect to the even part \(\mathfrak{h}_{\overline{0}}\) of the Cartan subalgebra, and admit a classification via highest weight theory (cf. \cite{CW}).

Our focus here is on the Schur--Weyl duality for \(\qq_n\) discovered by Sergeev \cite{S}.
It is obtained from the Schur--Weyl duality for \(\gll_{n|n}\) by adjoining an odd involution on the vector representation of \(\qq_n\).
More precisely, \(\qq_n\) admits a Schur--Weyl duality with the \textit{Sergeev superalgebra}.
This superalgebra is obtained from the group algebra \(\mathbb{C}[\mathfrak{S}_d]\) of the symmetric group by adjoining Clifford generators, and it is always semisimple.
The corresponding Schur--Weyl duality fits naturally into the framework of the double supercentralizer theorem.
This theory was subsequently quantized by Olshanski \cite{O}.
He introduced the quantum group \(U_q(\qq_n)\) of \(\qq_n\) and the Hecke--Clifford superalgebra as a quantization of the Sergeev superalgebra, and established a Schur--Weyl duality between them.
The fact that the classical limit of \(U_{q}(\qq_n)\) coincides with \(U(\qq_n)\) was later proved in \cite{GJKK}.

According to \cite{GS}, there are two affine analogues of the Lie superalgebra of type \(Q\).
The first is the \textit{untwisted affine Lie superalgebra} \(\qq_n^{(1)}\), constructed from the loop Lie superalgebra by
\[\qq_n^{(1)} = \mathfrak{sq}_n \otimes \mathbb{C}[t, t^{-1}] \oplus \mathbb{C}D,\]
where \(\mathfrak{sq}_n = [\qq_n,\qq_n]\).
Since its Cartan subalgebra still contains odd elements, \(\qq_n^{(1)}\) is not a Kac--Moody Lie superalgebra.
The \textit{twisted affine Lie superalgebra} \(\qq_n^{(2)}\) is obtained as a subquotient of \(\qq_n^{(1)}\) via an automorphism of \(\mathfrak{sq}_n\).
Remarkably, \(\qq_n^{(2)}\) is a (non-symmetrizable) Kac--Moody Lie superalgebra (cf. \cite{HS}).
Its construction was motivated by the classification of finite-growth Lie superalgebras, and \(\qq_n^{(2)}\) provides one of the rare examples of a non-symmetrizable Kac--Moody Lie superalgebra of finite growth.
By Mathieu \cite{M1}, the only simple \(\mathbb{Z}\)-graded Lie algebras of finite growth that are related to Kac--Moody Lie algebras are simple Lie algebras and loop Lie algebras.
The existence of \(\qq_n^{(2)}\) is therefore of considerable interest.
A ``quantum affine superalgebra'' \(U_{q}(\widehat{\qq}_n^{\mathsf{tw}})\) associated with the Lie superalgebra of type \(Q\) was further introduced by Chen--Guay \cite{CG}.
The classical limit of \(U_{q}(\widehat{\qq}_n^{\mathsf{tw}})\) does not coincide with \(U(\qq_n^{(2)})\).
Nonetheless, an evaluation homomorphism to \(U_{q}(\qq_n)\) exists \cite{LMZ}.
Moreover, a Schur--Weyl duality with the affine Hecke--Clifford superalgebra has been established, providing an affine analogue of Olshanski's result.

\subsection{Quiver Hecke(--Clifford) superalgebras}

The quiver Hecke algebra (or Khovanov--Lauda--Rouquier algebra) was introduced independently by Khovanov--Lauda \cite{KL1,KL2} and Rouquier \cite{R}.
It categorifies the negative half of the quantum group of a symmetrizable Kac--Moody Lie algebra.
The Schur--Weyl duality between an arbitrary quantum affine algebra and a symmetric quiver Hecke algerbas was established by Kang--Kashiwara--Kim \cite{KKK}.

Kang--Kashiwara--Tsuchioka \cite{KKT} introduced two super analogues of the quiver Hecke algebra.
The more direct generalization is the quiver Hecke superalgebra, obtained by equipping generators of the same type as those of the quiver Hecke algebra with a non-trivial \(\mathbb{Z}/2\mathbb{Z}\)-grading and modifying the defining relations accordingly.
This was subsequently employed in \cite{KKO1,KKO2} for the categorification of quantum affine (super)algebras.
The other is the quiver Hecke--Clifford superalgebra, obtained from the quiver Hecke algebra by adjoining Clifford generators in the spirit of the construction of Sergeev algebra.
In analogy with Brundan--Kleshchev's isomorphism \cite{BK} between the cyclotomic quiver Hecke algebra and the cyclotomic affine Hecke algebra, Kang--Kashiwara--Tsuchioka further established an isomorphism between the cyclotomic quiver Hecke--Clifford superalgebra and the cyclotomic affine Hecke--Clifford superalgebra.
By analogy with the result of Chen--Guay, this suggests the existence of a Schur--Weyl duality between a ``quantum affine superalgebra of type \(Q\)'' and the quiver Hecke--Clifford superalgebra.

\subsection{Main results}

The aim of this paper is to merge the constructions of Kang--Kashiwara--Kim and Sergeev so as to build a Schur--Weyl duality between ``quantum affine analogues of \(\qq_n\)'' and a symmetric quiver Hecke--Clifford superalgebra.
To this end, we first develop the foundations of the representation theory of symmetric quiver Hecke--Clifford superalgebras \(RC_{\beta}\).

For \(M_i \in RC_{\beta_i} \sgMod\), we define the \textit{convolution product} \(M_1 \circ M_2 \in RC_{\beta_1 + \beta_2} \sgMod\) by
   \[M_1 \circ M_2 = RC_{\beta_1 + \beta_2}e(\beta_1,\beta_2) \otimes_{(RC_{\beta_1} \otimes RC_{\beta_2})}(M_1 \otimes M_2).\]
As in the case of the quiver Hecke algebra, we introduce R-matrices between convolution products.
Specifically, for \(1 \leq a < |\beta|\), we define the \textit{intertwiner} \(\varphi_a \in RC_{\beta}\) and use it to construct the \textit{R-matrix} \(R_{M,N} \colon M \circ N \to N \circ M\) for \(M \in RC_{\beta_1} \sgMod\) and \(N \in RC_{\beta_2} \sgMod\) (see Section \ref{subsec:intertwiners}).
In the symmetric case, one can further define the \textit{affinization} and the \textit{renormalized R-matrix} of a module.
The following proposition describes the direct relationship between the quiver Hecke algebra and the quiver Hecke--Clifford superalgebra arising from the same symmetric Cartan superdatum.

\begin{prop}[Proposition \ref{prop:morita equivalence for quiver Hecke--Clifford superalgebra}]
   Let \(R_{\beta}\) and \(RC_{\beta}\) be the symmetric quiver Hecke algebra and the symmetric quiver Hecke--Clifford superalgebra constructed from the same Cartan superdatum.

   Let \(F_{\beta} = e^{\dagger}(\beta).- \colon RC_{\beta} \sgMod \to R_{\beta} \sgMod\) denote the equivalence of categories from \cite{KKT}.
   For any \(M_i \in RC_{\beta_i} \sgMod\) (\(i = 1,2\)), there is an isomorphism
   \[F_{\beta_1 + \beta_2}(M_1 \circ M_2) \simeq F_{\beta_1}(M_1) \circ F_{\beta_2}(M_2).\]
   Equivalently, \(F \coloneqq \bigoplus_{\beta \in \mathsf{Q}^+} F_{\beta}\) is a monoidal functor.
\end{prop}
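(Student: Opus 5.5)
The plan is to make the Morita equivalence of \cite{KKT} explicit and push it through the induction functor. Recall the setup of the equivalence. In the symmetric setting, $RC_\beta$ contains the Clifford superalgebra $C_n$ ($n=|\beta|$), generated by odd elements $c_1,\dots,c_n$. There is an idempotent $e^{\dagger}(\beta)\in RC_\beta$, built as a product of idempotents $e^\dagger_a$ in the rank-one Clifford pieces $\langle c_a\rangle$ for the odd positions (suitably paired). It has three properties we will use:
\begin{itemize}
\item $e^\dagger(\beta)RC_\beta e^\dagger(\beta)\simeq R_\beta$, possibly up to a Clifford tensor factor absorbed by the super conventions;
\item $RC_\beta e^\dagger(\beta)RC_\beta=RC_\beta$;
\item $F_\beta(M)=e^\dagger(\beta)M$.
\end{itemize}
The first step is to check compatibility of these idempotents with the embedding
\[
\iota\colon RC_{\beta_1}\otimes RC_{\beta_2}\to e(\beta_1,\beta_2)RC_{\beta_1+\beta_2}e(\beta_1,\beta_2).
\]
We want $\iota\bigl(e^\dagger(\beta_1)\otimes e^\dagger(\beta_2)\bigr)$ and $e^\dagger(\beta_1+\beta_2)e(\beta_1,\beta_2)$ to agree, or at least to be conjugate by a unit of the Clifford subalgebra. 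Since the Clifford generators of the second factor are shifted by $|\beta_1|$, and the $e^\dagger$'s are products of local idempotents, this should reduce to a direct check on the Clifford part. Parity pairing may cause a mismatch when both $\beta_i$ have an odd number of odd positions. I expect this to be the main obstacle: one needs the super tensor convention (the $\otimes$ of supermodules possibly yielding $Q$-type doubling) to match. If necessary, I would first handle it by showing that $e^\dagger(\beta_1)\otimes e^\dagger(\beta_2)$ and $e^\dagger(\beta_1+\beta_2)e(\beta_1,\beta_2)$ are equivalent idempotents, so that they give isomorphic functors.

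With this in hand, the computation runs as follows. Write $e^\dagger=e^\dagger(\beta_1+\beta_2)$ and $e'=e^\dagger(\beta_1)\otimes e^\dagger(\beta_2)$. Then
\begin{align*}
F(M_1\circ M_2)&=e^\dagger RC_{\beta_1+\beta_2}e(\beta_1,\beta_2)\otimes_{RC_{\beta_1}\otimes RC_{\beta_2}}(M_1\otimes M_2)\\
&\simeq e^\dagger RC\, e(\beta_1,\beta_2)\,e'\otimes_{e'(RC_{\beta_1}\otimes RC_{\beta_2})e'} e'(M_1\otimes M_2).
\end{align*}
The second line uses the standard Morita fact $A\otimes_B M\simeq Ae\otimes_{eBe}eM$ whenever $BeB=B$, which holds here because $RC_{\beta_i}e^\dagger(\beta_i)RC_{\beta_i}=RC_{\beta_i}$. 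The factor $e'(M_1\otimes M_2)$ equals $F(M_1)\otimes F(M_2)$, and $e'(RC_{\beta_1}\otimes RC_{\beta_2})e'\simeq R_{\beta_1}\otimes R_{\beta_2}$.

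It then remains to identify the bimodule $e^\dagger RC_{\beta_1+\beta_2}e(\beta_1,\beta_2)e'$ with $R_{\beta_1+\beta_2}e(\beta_1,\beta_2)$, compatibly with the isomorphisms $e^\dagger RCe^\dagger\simeq R$ and $e'(RC\otimes RC)e'\simeq R\otimes R$. By the first step we may take $e'=e^\dagger e(\beta_1,\beta_2)$, which reduces this to $e^\dagger RC e^\dagger e(\beta_1,\beta_2)\simeq R_{\beta_1+\beta_2}e(\beta_1,\beta_2)$. This is immediate once we check that the isomorphism of \cite{KKT} sends the idempotents $e(\nu)$ to $e(\nu)$, and that it restricts on the parabolic subalgebra to the tensor product of the two smaller isomorphisms. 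I would verify the latter on generators: $x_a$, $\tau_a$ and $e(\nu)$ are mapped by explicit formulas that involve only the adjacent Clifford generators, so their images are local. Finally, naturality in $M_1$ and $M_2$ gives the monoidal structure, and the associativity and unit coherence follow from the same identities applied to triple products.
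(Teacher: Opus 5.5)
\textbf{Verdict.} Your argument is correct once the right idempotent is used. It is a more careful version of the paper's computation, but your description of $e^{\dagger}(\beta)$ is wrong for the symmetric case, and the obstacle you anticipate does not occur.

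\textbf{Comparison with the paper.} The paper argues by a chain of equalities. It writes $e^{\dagger}(\beta_1+\beta_2)RC_{\beta_1+\beta_2}$ as $R_{\beta_1+\beta_2}\,e^{\dagger}(\beta_1+\beta_2)\,\mathcal{C}_{n_1+n_2}$. Since $\mathcal{C}_{n_1+n_2}$ lies in the image of $RC_{\beta_1}\otimes RC_{\beta_2}$, it moves the Clifford factor across the tensor sign into $M_1\otimes M_2$. What remains is $R_{\beta_1+\beta_2}\,e^{\dagger}(\beta_1+\beta_2)e(\beta_1,\beta_2)\otimes(M_1\otimes M_2)$, which it then reads as $F_{\beta_1}(M_1)\circ F_{\beta_2}(M_2)$. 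That last step silently replaces a tensor product over $RC_{\beta_1}\otimes RC_{\beta_2}$ by one over $R_{\beta_1}\otimes R_{\beta_2}$. Your lemma $X\otimes_B M\simeq Xe'\otimes_{e'Be'}e'M$, for the full even idempotent $e'$ of $B=RC_{\beta_1}\otimes RC_{\beta_2}$, is exactly what justifies it. So your packaging makes the identification rigorous. The price is checking two compatibilities: of the idempotents, and of the isomorphism $R_\beta\simeq RC^{\dagger}_\beta$ with the parabolic embeddings. Both are immediate.

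\textbf{Correction: the form of $e^{\dagger}(\beta)$.} In the symmetric case $I_{\mathsf{odd}}=\varnothing$; the paper proves this just before the statement. Hence $J=I\times\{0,1\}$ and there are no odd positions to pair. The idempotent $e^{\dagger}(\beta)$ is not built from Clifford idempotents. It is the sum of the weight idempotents $e(\nu)$ over $\nu\in J^{\beta}$ with $\pi_2(\nu_i)=0$ for all $i$. Moreover $RC^{\dagger}_{\beta}\simeq R_{\beta}$ holds exactly, with no Clifford tensor factor, via
\[
e(\nu)\mapsto e(\nu^{\dagger}),\qquad x_pe(\nu)\mapsto x_pe(\nu^{\dagger}),\qquad \sigma_ae(\nu)\mapsto \sigma_ae(\nu^{\dagger}).
\]

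\textbf{Consequences for your steps.}
\begin{itemize}
\item Your first step is an exact identity: $\iota(e^{\dagger}(\beta_1)\otimes e^{\dagger}(\beta_2))=\sum e(\nu*\eta)=e^{\dagger}(\beta_1+\beta_2)e(\beta_1,\beta_2)$. This holds because $\nu*\eta$ has all second coordinates $0$ exactly when $\nu$ and $\eta$ both do.
\item No parity pairing, $Q$-type doubling, or fallback to ``equivalent idempotents'' is needed.
\item Your final check is trivial: the formulas above involve no Clifford generators, so the isomorphism visibly restricts on $e^{\dagger}(\beta_1+\beta_2)e(\beta_1,\beta_2)RC_{\beta_1+\beta_2}e^{\dagger}(\beta_1+\beta_2)e(\beta_1,\beta_2)$ to the tensor product of the two smaller isomorphisms.
\item Your hedge that a Clifford factor might be ``absorbed by the super conventions'' would not be harmless in general. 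It simply never arises here.
\end{itemize}
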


By means of this equivalence, much of the the results in representation theory of symmetric quiver Hecke--Clifford superalgebras can be deduced from that of the corresponding quiver Hecke algebras.
For example, the following holds.

\begin{cor}[{Corollary \ref{cor:real simple module of quiver Hecke--Clifford superalgebra}}]
   Let \(RC_{\beta_i}\) (\(i= 1,2\)) be symmetric quiver Hecke--Clifford superalgebras, and let \(M_i \in RC_{\beta_i} \sgmod\) be simple modules, at least one of which is real simple.
   \begin{enumerate}[(i)]
      \item If \(\hd M \circ N \simeq \soc N \circ M\), then \(M \circ N\) is simple and \(M \circ N \simeq N \circ M\).
      \item \(M \circ N \simeq N \circ M\) if and only if \(M \circ N\) is simple.
   \end{enumerate}
\end{cor}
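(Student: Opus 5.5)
The plan is to transport the statement to the quiver Hecke algebra side through the monoidal equivalence $F=\bigoplus_\beta F_\beta$ of the preceding proposition, and there invoke the corresponding results for symmetric quiver Hecke algebras due to Kang--Kashiwara--Kim--Oh. The relevant facts are: for simple modules $M,N$ with one of them real, $M\circ N$ has simple head and simple socle; if $\hd(M\circ N)\simeq \soc(N\circ M)$ then $M\circ N$ is simple; and $M\circ N\simeq N\circ M$ holds exactly when $M\circ N$ is simple.

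\textbf{Step 1: $F$ preserves all the notions involved.} Since each $F_\beta$ is an equivalence of abelian categories, it preserves simplicity, heads and socles, and isomorphism classes. We need to check that it is compatible with grading shifts and parity shifts in the appropriate sense; this should follow from the construction in \cite{KKT}, because $e^\dagger(\beta)$ is an even homogeneous idempotent.

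\textbf{Step 2: $F$ preserves reality.} Recall that a simple $M$ is real when $M\circ M$ is simple. By the proposition, $F(M\circ M)\simeq F(M)\circ F(M)$. So $M$ is real simple if and only if $F(M)$ is real simple. Likewise, $F(M\circ N)\simeq F(M)\circ F(N)$ and $F(N\circ M)\simeq F(N)\circ F(M)$.

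\textbf{Step 3: apply the quiver Hecke results.} For (i), the hypothesis $\hd(M\circ N)\simeq \soc(N\circ M)$ becomes $\hd(F(M)\circ F(N))\simeq \soc(F(N)\circ F(M))$. The quiver Hecke algebra result then shows that $F(M)\circ F(N)$ is simple and isomorphic to $F(N)\circ F(M)$. Applying $F^{-1}$ gives (i). For (ii), in the same way $M\circ N\simeq N\circ M$ holds if and only if $F(M)\circ F(N)\simeq F(N)\circ F(M)$. By the known result, this is equivalent to $F(M)\circ F(N)$ being simple, which in turn is equivalent to $M\circ N$ being simple.

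\textbf{Main obstacle.} The delicate point is the super setting. In $RC_\beta\sgMod$ a simple module may be of type Q, and then $M\circ N$ might a priori differ from its parity shift, or "simple" might need to be read in the supercategory sense. One must verify that the equivalence $F$ from \cite{KKT} identifies simple objects of $RC_\beta\sgMod$ with simple objects of $R_\beta\sgMod$, and that the isomorphisms allow shifts in the same way on both sides. If $F$ turns out to be an equivalence of supercategories only up to the parity-shift functor, I would handle this by stating the isomorphisms up to grading and parity shift on both sides and checking that $F$ intertwines these shifts.
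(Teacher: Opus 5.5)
Your reduction follows the route the paper intends. The paper omits the proof and says the corollary is deduced from the quiver Hecke case via the monoidal equivalence $F$ of Proposition~\ref{prop:morita equivalence for quiver Hecke--Clifford superalgebra} together with \cite{KKKO1}. Steps~1--2 are fine, and your ``main obstacle'' goes away:
\begin{enumerate}[(a)]
\item $F_\beta$ is multiplication by an even idempotent, so it is a genuine equivalence of supercategories, odd morphisms included.
\item Since $R_\beta$ is purely even, $M_{\overline{0}}$ and $M_{\overline{1}}$ are submodules of any object of $R_\beta\sgmod$. So the simple objects there are ordinary simple graded $R_\beta$-modules placed in a single parity. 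No type~$Q$ simples occur on either side, and heads, socles and reality are the classical notions.
\item $R_\beta$ is symmetric in the sense of \cite{KKK}, since condition (iii) with $\pi_2=0$ gives $\mathcal{Q}_{ij}(u,v)\in\Bbbk[u-v]$, so \cite{KKKO1} applies.
\end{enumerate}

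The genuine gap is the quiver Hecke input you invoke for (i). The claim ``if $\hd(M\circ N)\simeq\soc(N\circ M)$ then $M\circ N$ is simple'' is not a theorem, and it is false. For simple $M,N$ with one of them real, the main theorem of \cite{KKKO1} says that $\Im r_{M,N}$ is at once $\hd(M\circ N)$ and $\soc(N\circ M)$. So the hypothesis holds for \emph{every} such pair, up to the grading shift by $\Lambda(M,N)$. For example, $L(1)\circ L(2)$ in type $A_\infty$ satisfies it but is a non-split extension of two one-dimensional simples, so it is not simple.

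Requiring a degree-preserving isomorphism only adds the condition $\Lambda(M,N)=0$, which still does not force $\mathfrak{d}(M,N)=0$. Take $M=L(1)$ and $N=\hd(L(0)\circ L(2)\circ L(1))$, the two-dimensional simple module supported on the words $(0,2,1)$ and $(2,0,1)$. A direct computation with the intertwiners gives $\Lambda(M,N)=0$ but $\Lambda(N,M)=2$. Via $F^{-1}$ these examples transport to $RC$.

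So item (i) as printed is a misprint (it appears in the paper too), and your Step~3 cannot be carried out for it. The correct statement has hypothesis $\hd(M\circ N)\simeq\soc(M\circ N)$. It follows from the fact that $\hd(M\circ N)$ occurs with multiplicity one in $M\circ N$, and your transport argument then proves it verbatim.

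Likewise, (ii) holds only up to a grading shift. Let $L(12)$ be the one-dimensional module on the word $(1,2)$. Then $L(1)\circ L(12)$ is simple, but it is isomorphic to $L(12)\circ L(1)$ only after a shift by $q^{\pm1}$. With these two corrections your argument is complete.
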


As an application, we construct a Schur--Weyl duality via quiver Hecke--Clifford superalgebras.
As an analogue of Sergeev's method, our aim is to produce such a duality by adding an odd involution to an existing Schur--Weyl duality for a ``quantum affine superalgebra of type \(A\)'' realized through a symmetric quiver Hecke algebra.
For this purpose, we draw on the Schur--Weyl duality due to Kwon--Lee \cite{KL4} between the generalized quantum group \(\mathcal{U}(\epsilon)\) of type \(A\) and the quiver Hecke algebra of type \(A_{\infty}\).
\begin{thm}[Theorem \ref{thm:construction of bimodule of untwisted version}, Theorem \ref{thm:construction of bimodule of twisted version}]
   Fix an arbitrary odd involution \(P\) on the affinization \(V_z\) of the vector representation of \(\mathcal{U}(\epsilon)\).
   By a suitable choice of subsuperalgebras \(\mathcal{U}(\epsilon)^{P}\) and \(\mathcal{U}^{\mathsf{tw}}(\epsilon)^{P}\) of \(\mathcal{U}(\epsilon)\), one obtains a \((\mathcal{U}(\epsilon)^{P},RC_{\beta})\)-bisupermodule \(V'^{\otimes \beta}_{\mathbb{O}}\) and a \((\mathcal{U}^{\mathsf{tw}}(\epsilon)^{P},RC_{\beta})\)-bisupermodule \(V^{\mathsf{tw} \otimes \beta}_{\mathbb{O}}\).
\end{thm}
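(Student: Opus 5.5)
The plan is to follow Sergeev's passage from $\gll_{n|n}$ to $\qq_n$, now inside the Kwon--Lee construction. Kwon--Lee give a $(\mathcal{U}(\epsilon),R_{\beta})$-bimodule $V^{\otimes\beta}_{\mathbb{O}}$. It is built from completed tensor powers of the affinization $V_z$, localized at the appropriate spectral parameters. The quiver Hecke generators $e(\nu)$, $x_k$, $\tau_k$ act through projectors onto generalized eigenspaces of the $z_k$, shifts of the $z_k$, and renormalized R-matrices $R_{V_z,V_z}$.

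\textbf{Step 1: the fixed-point subalgebras.} Given the odd involution $P$ on $V_z$, let $P_k$ be $P$ acting on the $k$-th tensor factor, with the usual Koszul sign rule. Then the $P_k$ pairwise anticommute and satisfy $P_k^2=1$, up to a normalization of $P$. This gives Clifford generators $c_k$ acting on $V^{\otimes d}_z$. Define $\mathcal{U}(\epsilon)^{P}$ to be the subsuperalgebra of elements whose coproduct action on every tensor power supercommutes with all $P_k$. In practice I would define it as the supercentralizer of $P$ in $\mathcal{U}(\epsilon)$ acting on $V_z$, and check that it is closed under the coproduct. For this to work $P$ must be compatible with $\Delta$ in a Hopf sense, so I will take the coproduct-stable part. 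Define $\mathcal{U}^{\mathsf{tw}}(\epsilon)^{P}$ the same way, but using $P$ twisted by $z\mapsto -z$ (or $q\mapsto -q$) on the loop parameter, matching Chen--Guay's twisted setting.

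\textbf{Step 2: the module $V'^{\otimes\beta}_{\mathbb{O}}$.} Take the $e^{\dagger}$-type idempotent. In the untwisted case, define $V'^{\otimes\beta}_{\mathbb{O}}$ as a suitable subspace or modification of $V^{\otimes\beta}_{\mathbb{O}}$, doubled by Clifford, so that under the equivalence $F_\beta$ of Proposition \ref{prop:morita equivalence for quiver Hecke--Clifford superalgebra} it corresponds to the Kwon--Lee module. Concretely, the candidate is $V'^{\otimes\beta}_{\mathbb{O}}=RC_\beta e^{\dagger}(\beta)\otimes_{R_\beta}V^{\otimes\beta}_{\mathbb{O}}$. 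I would realize it inside the localized tensor powers by letting $c_k$ act as $P_k$.

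\textbf{Step 3: the defining relations.} I must then verify the defining relations of $RC_\beta$, namely
\[
c_kx_k=-x_kc_k,\qquad \tau_kc_k=c_{k+1}\tau_k,
\]
together with the quadratic and braid relations, which carry extra Clifford correction terms. The relations among the $x$'s and $\tau$'s alone follow from Kwon--Lee together with the morita equivalence, which is compatible with convolution. The mixed relations reduce to the following statement about the R-matrix on $V_z\otimes V_w$: it intertwines $P\otimes 1$ with $1\otimes P$, and it anticommutes appropriately with $z$. This is a rank-two computation.

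\textbf{Step 4: commutation and the main obstacle.} Finally, the left action commutes with the right action by the choice of fixed-point subalgebras. The main obstacle is Step 3's rank-two check that $P$ is compatible with the normalized R-matrix. The difficulty is that for an arbitrary odd involution $P$, the R-matrix $R_{V_z,V_w}$ need not commute with $P\otimes P$. I expect to handle this by restricting the algebra: replace $\mathcal{U}(\epsilon)$ by $\mathcal{U}(\epsilon)^P$, so that $V_z\otimes V_w$ becomes cyclic over the smaller algebra and the intertwiner is forced by uniqueness. In the twisted case I would instead use the sign $z\mapsto -z$ to absorb the anticommutation with $x_k$.
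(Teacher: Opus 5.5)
There is a genuine gap, and it sits exactly where you locate your main obstacle. In Step 3 you let $\c_k$ act by $P_k$ and let $\sigma_k$ act through $R^{\mathrm{norm}}_{1,1}$ on every idempotent component $e(\nu)$, $\nu\in J^\beta$. With that definition, the relation $\sigma_a\c_p=\c_{s_a(p)}\sigma_a$ forces $R^{\mathrm{norm}}_{1,1}$ to intertwine $P\otimes 1$ and $1\otimes P$.

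This is false for a general odd involution. Take $P$ exchanging $\lvert\mathbf{e}_i\rangle$ and $\lvert\mathbf{e}_{M+i}\rangle$. The explicit formula for $R^{\mathrm{norm}}_{1,1}(z)$ shows that $R^{\mathrm{norm}}_{1,1}(z)\circ(1\otimes P)$ and $(P\otimes 1)\circ R^{\mathrm{norm}}_{1,1}(z)$ are not even proportional on $\lvert\mathbf{e}_1\rangle\otimes\lvert\mathbf{e}_1\rangle$.

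Your proposed repair cannot work. Whether the right operators satisfy the relations of $RC_\beta$ does not depend on which subalgebra acts on the left. Moreover, passing from $\mathcal{U}(\epsilon)$ to a smaller subalgebra enlarges the space of intertwiners, so it destroys uniqueness rather than forcing it.

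Similarly, in the untwisted case $P$ does not touch $z$. So the sign in $\c_kx_k=-x_k\c_k$ cannot come from $P$. It has to be built into the action of $x_k$ on the components with $\pi_2(\nu_k)=1$.

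The missing idea, which is what the paper does, is never to use $R^{\mathrm{norm}}_{1,1}$ directly off the $e^{\dagger}(\beta)$-part. Concretely:
\begin{enumerate}[(i)]
\item Keep Kwon--Lee's action of $x_i,\tilde s_k$ on components with all $\pi_2(\nu_j)=0$.
\item Let $\c_i$ act by $P$ in the $i$-th factor, with the sign $(-1)^{\Par(v_i)+\cdots+\Par(v_n)}\sqrt{-1}$.
\item \emph{Define} $x_i$ and $\tilde s_k$ on a component with $S_\nu=\{j_1<\cdots<j_l\}\neq\varnothing$ by conjugating through $\c_{j_1}\cdots\c_{j_l}$.
\end{enumerate}
All mixed relations then hold by construction for every $P$, which gives a right $\mathcal{KS}_\beta$-module. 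The lattice $V'^{\otimes\beta}_{\mathbb{O}}$ is stable because $RC_\beta$ is generated by $e^{\dagger}(\beta)RC_\beta e^{\dagger}(\beta)\simeq R_\beta$ (stable by Kwon--Lee) together with the $\c_i$.

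This is just your Step 2 candidate, transported to $\bigoplus_\nu V_\nu e(\nu)$ via $P$. It should be written $V^{\otimes\beta}_{\mathbb{O}}\otimes_{R_\beta}e^{\dagger}(\beta)RC_\beta$, since $V^{\otimes\beta}_{\mathbb{O}}$ is a right module. Used consistently, it makes your rank-two check unnecessary.

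The subalgebra is then simply the commutant $\bigcap_\beta\{x\in\mathcal{U}(\epsilon)\mid x.(m.a)=(x.m).a\}$. No coproduct-stability is needed; it need not be a Hopf subalgebra. In the untwisted case, your first definition via supercommutation with all $P_k$ gives the same condition.

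Finally, the twisted case uses $z\mapsto z^{-1}$, not $z\mapsto -z$. The paper takes $X^{\mathsf{tw}}(j,\varepsilon)=q^{-(-1)^{\varepsilon}2j}$ and $x_ie(\nu)\mapsto\frac{X_i-X(\nu_i)}{X_i+X(\nu_i)}e(\nu)$. With these, the substitution $T_i\colon X_i\mapsto X_i^{-1}$ carries $\mathbb{O}^{\mathsf{tw}}_\nu$ to $\mathbb{O}^{\mathsf{tw}}_{c_i(\nu)}$ and gives $T_i(x_ie(\nu))=-x_ie(c_i(\nu))$. This twist enters the $\c_i$-action on the module, not the definition of the subalgebra. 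By contrast, $z\mapsto -z$ would send the completion at $q^{-2j}$ to one at $-q^{-2j}\neq q^{2j}$.
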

We consider two natural choices of odd involution.
The first is an odd morphism on the affinization of the vector representation of \(\qq_n^{(2)}\), and the second is the analogue of the odd morphism on the affinization of the vector representation of \(U_{q}(\widehat{\qq}_n^{\mathsf{tw}})\).

We note that the subalgebra of \(\mathcal{U}(\epsilon)\) with which the Schur--Weyl duality is established depends on the choice of odd involution used in the construction.

In Section \ref{subsec:Properties of the Schur--Weyl functors}, we investigate the properties of the Schur--Weyl functors arising from these bimodules.
In analogy with \cite{KKK,KL4}, exactness holds for these functors.
\begin{thm}[Theorem \ref{thm:exactness of SW functor for queer}]
   The functors \(V'^{\otimes \beta}_{\mathbb{O}} \otimes_{RC_{\beta}} -\) and \(V^{\mathsf{tw} \otimes \beta}_{\mathbb{O}} \otimes_{RC_{\beta}} -\) are exact.
\end{thm}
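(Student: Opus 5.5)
Exactness of $V'^{\otimes \beta}_{\mathbb{O}} \otimes_{RC_{\beta}} -$ is equivalent to flatness of $V'^{\otimes \beta}_{\mathbb{O}}$ as a right $RC_{\beta}$-supermodule, and likewise for the twisted bimodule. The plan is to transfer the exactness proved in \cite{KL4} (following \cite{KKK}) for the Kwon--Lee bimodule $V^{\otimes \beta}_{\mathbb{O}}$ over the quiver Hecke algebra $R_{\beta}$, using the Morita equivalence $F_{\beta}=e^{\dagger}(\beta).-$ of \cite{KKT}, which is monoidal by Proposition \ref{prop:morita equivalence for quiver Hecke--Clifford superalgebra}.

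First I will make the relation between the two bimodules explicit. The Clifford construction adds the odd involution $P$ on $V_z$. I expect $V'^{\otimes \beta}_{\mathbb{O}}$ to be obtained from $V^{\otimes \beta}_{\mathbb{O}}$ by induction along this adjunction, namely
\[
V'^{\otimes \beta}_{\mathbb{O}} \simeq V^{\otimes \beta}_{\mathbb{O}} \otimes_{R_{\beta}} e^{\dagger}(\beta) RC_{\beta}
\]
as right $RC_{\beta}$-supermodules. Equivalently, $V'^{\otimes \beta}_{\mathbb{O}}\, e^{\dagger}(\beta) \simeq V^{\otimes \beta}_{\mathbb{O}}$, compatibly with the left action of $\mathcal{U}(\epsilon)^P$ restricted from $\mathcal{U}(\epsilon)$. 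The reason is that each Clifford generator acts on the $a$-th tensor factor through $P$, and $e^{\dagger}(\beta)$ is the idempotent cutting out one eigenline of the Clifford action in each factor. The step I expect to be the main obstacle is checking this isomorphism at the level of the $\mathbb{O}$-lattices and completions used in the construction, and not merely after localization. I would verify it idempotent by idempotent, $e(\nu)$ at a time, where the bimodule is a direct sum of tensor products of copies of $V_z$ over power series rings.

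Granting this isomorphism, for any $M \in RC_{\beta}\sgMod$ we get
\[
V'^{\otimes \beta}_{\mathbb{O}} \otimes_{RC_{\beta}} M \simeq V^{\otimes \beta}_{\mathbb{O}} \otimes_{R_{\beta}} e^{\dagger}(\beta)RC_{\beta}\otimes_{RC_{\beta}} M \simeq V^{\otimes \beta}_{\mathbb{O}} \otimes_{R_{\beta}} F_{\beta}(M).
\]
So the Schur--Weyl functor is the composite of $F_{\beta}$ and the Kwon--Lee functor. The functor $F_{\beta}$ is an equivalence, hence exact, and the Kwon--Lee functor is exact by \cite{KL4}. Exactness follows.

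The twisted case goes the same way, with $V^{\mathsf{tw}\otimes\beta}_{\mathbb{O}}$ restricted to $\mathcal{U}^{\mathsf{tw}}(\epsilon)^P$. Exactness only concerns the underlying right module structure, so the choice of subalgebra is irrelevant.

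If the isomorphism above fails on the nose, the fallback is to reprove flatness directly, following the argument of \cite{KKK,KL4}. One shows $V'^{\otimes \beta}_{\mathbb{O}} e(\nu)$ is free of finite rank over the completed polynomial ring $\widehat{\mathbb{O}}[x_1,\dots,x_n] \otimes \mathcal{C}_n$. Then one uses that $RC_{\beta}$ is free over this subalgebra, with the PBW-type basis from \cite{KKT}, to reduce flatness over $RC_{\beta}$ to flatness over that commutative-by-Clifford subalgebra. Clifford modules are projective because $\mathcal{C}_n$ is semisimple in the super sense, and this gives the result.
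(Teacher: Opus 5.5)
Your main argument is correct, but it takes a different route from the paper. The paper proves flatness of $V'^{\otimes\beta}_{\mathbb{O}}$ directly over $RC_{\beta}$. It applies Lemma~\ref{lem:flatness lemma} twice, along $\mathcal{P}'_{\beta}\hookrightarrow\mathcal{A}'_{\beta}\hookrightarrow RC_{\beta}$. For each inclusion it checks freeness, computes $\HOM_{\mathcal{A}'_{\beta}}(RC_{\beta},\mathcal{A}'_{\beta})\simeq RC_{\beta}$ via Frobenius reciprocity and Lemma~\ref{lem:ind and coind are isom}, and obtains finite global dimension from the Morita equivalence with $R_{\beta}$. It then starts from flatness of the completion $\mathbb{O}'_{\beta}$ over $\mathcal{P}'_{\beta}$.

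You instead apply the Morita equivalence to the bimodule itself. Since $RC_{\beta}e^{\dagger}(\beta)RC_{\beta}=RC_{\beta}$, the multiplication map $Ne^{\dagger}(\beta)\otimes_{RC^{\dagger}_{\beta}}e^{\dagger}(\beta)RC_{\beta}\to N$ is an isomorphism for every right $RC_{\beta}$-supermodule $N$. Moreover $V'^{\otimes\beta}_{\mathbb{O}}e^{\dagger}(\beta)$ is the Kwon--Lee bimodule on the nose:
\begin{itemize}
\item[(i)] $X'(j,\overline{0})=X(j)$;
\item[(ii)] the embedding $\mathcal{P}'_{\nu}\hookrightarrow\mathbb{O}'_{\nu}$ is the same as for $R_{\beta}$;
\item[(iii)] the action on the summands with all $\pi_2(\nu_j)=\overline{0}$ is defined to be the $R_{\beta}$-action.
\end{itemize}
The paper uses exactly this identification in the proof of Theorem~\ref{thm:construction of bimodule of untwisted version}. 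So the step you flagged as the main obstacle needs no lattice-level check.

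Your route is shorter and more informative. It exhibits $\mathcal{F}^{P}_{\epsilon,\beta}$ as the Kwon--Lee functor precomposed with $F_{\beta}$, compatibly with the $\mathcal{U}(\epsilon)^{P}$-action, which commutes with the right action. The paper's route avoids identifying the truncated bimodule and only needs the completion to be flat over $\mathcal{P}'_{\beta}$. Note that what you need from \cite{KKK,KL4} is flatness of $V^{\otimes\beta}_{\mathbb{O}}$ over $R_{\beta}$, i.e.\ exactness on all of $R_{\beta}\gMod$, not only on $R_{\beta}\gmod$. That is what the argument of \cite{KKK} gives.

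\textbf{The twisted case.} It does not go through verbatim. In $V^{\mathsf{tw}\otimes\beta}_{\mathbb{O}}$ the element $x_ie(\nu)$ is sent to $\frac{X_i-X(\nu_i)}{X_i+X(\nu_i)}e(\nu)$ rather than to $X(\nu_i)^{-1}(X_i-X(\nu_i))e(\nu)$. This changes the action of $x_i$, and hence that of $\sigma_k$, through $\iota$ and the factors $\tilde{\mathcal{Q}}(x_k,x_{k+1})$. So $V^{\mathsf{tw}\otimes\beta}_{\mathbb{O}}e^{\dagger}(\beta)$ is not literally the Kwon--Lee bimodule, and you cannot simply quote \cite{KL4}.

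The fix is routine. Since $X_i+X(\nu_i)$ is a unit in $\mathbb{O}^{\mathsf{tw}}_{\nu}$, the ring $\mathbb{O}^{\mathsf{tw}}_{\nu}$ is still the $(x_1,\ldots,x_n)$-adic completion of $\mathcal{P}_{\nu}$. The truncation is free over it, hence flat over $\mathcal{P}_{\beta}$. Lemma~\ref{lem:flatness lemma} for $\mathcal{P}_{\beta}\hookrightarrow R_{\beta}$, whose hypotheses are verified in \cite{KKK}, then gives flatness over $R_{\beta}$, and your Morita step finishes.

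\textbf{The fallback.} It is not valid as stated. Freeness of $RC_{\beta}$ over a subalgebra $A$ only shows that flatness over $RC_{\beta}$ implies flatness over $A$, not the converse. The converse is exactly what Lemma~\ref{lem:flatness lemma} provides under hypotheses (b) and (c), and verifying those is the real content of the paper's proof. Moreover, $V'^{\otimes\beta}_{\mathbb{O}}e(\nu)$ is not stable under the $\mathsf{c}_i$, which carry it into $V'^{\otimes\beta}_{\mathbb{O}}e(c_i(\nu))$.
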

We also verify that these functors do not vanish by computing their values on segment modules.
\begin{prop}[Example \ref{ex:segment module}]
   For the length-\(1\) segment module \(L(a) \in RC_{\alpha_a}\) with \(a \in \mathbb{Z}\), the following isomorphisms hold:
   \begin{align*}
      V'^{\otimes \beta}_{\mathbb{O}} \otimes_{RC_{\beta}} L(a) &\simeq \mathcal{W}_{1,\epsilon}, \\
      V^{\mathsf{tw} \otimes \beta}_{\mathbb{O}} \otimes_{RC_{\beta}} L(a) &\simeq \mathcal{W}_{1,\epsilon}.
   \end{align*}
\end{prop}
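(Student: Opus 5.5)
The plan is to compute \(V'^{\otimes \beta}_{\mathbb{O}} \otimes_{RC_{\beta}} L(a)\) directly for \(\beta = \alpha_a\), where everything collapses to rank one. For \(|\beta| = 1\) the superalgebra \(RC_{\alpha_a}\) is generated by the single polynomial generator \(x_1\) and one Clifford generator \(c_1\), with \(c_1^2 = 1\) and the appropriate (super)commutation relation between \(x_1\) and \(c_1\). The segment module \(L(a)\) is the unique simple module on which \(x_1\) acts nilpotently, hence by zero. It is the two-dimensional Clifford module: \(RC_{\alpha_a}/RC_{\alpha_a}x_1 \simeq \mathbb{C}\langle c_1\rangle\), which is of type \(\mathsf{M}\).

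By construction, the bimodule \(V'^{\otimes \alpha_a}_{\mathbb{O}}\) is the completed affinization \(V_{\mathbb{O}} \simeq V \otimes \mathbb{C}[[z - z_a]]\), suitably localized at the point attached to \(a\). Here \(x_1\) acts as multiplication by \(z - z_a\) (up to the normalization fixed in the construction), and \(c_1\) acts by the chosen odd involution \(P\). Step one is to record this precise action from Theorem \ref{thm:construction of bimodule of untwisted version}.

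Step two is to identify \(L(a)\) with \(RC_{\alpha_a} \otimes_{\mathbb{C}[x_1]} \mathbb{C}\), where \(x_1\) acts by zero. Then
\[
V'^{\otimes\alpha_a}_{\mathbb{O}} \otimes_{RC_{\alpha_a}} L(a) \simeq V'_{\mathbb{O}}/x_1V'_{\mathbb{O}} \simeq V_{z=z_a}.
\]
This is the evaluation of the affinized vector representation at the spectral parameter attached to \(a\), and it is exactly \(\mathcal{W}_{1,\epsilon}\) (with the appropriate spectral shift) as a \(\mathcal{U}(\epsilon)^P\)-module. One must check that the Clifford part is absorbed correctly: the isomorphism \(L(a) \simeq RC_{\alpha_a}/RC_{\alpha_a}x_1\) must hold as a left module. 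This is clear because \(\{1, c_1\}\) spans the quotient and \(c_1\) supercommutes with \(x_1\) up to sign, so the left ideal \(RC_{\alpha_a}x_1\) is two-sided.

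The twisted case is identical, using Theorem \ref{thm:construction of bimodule of twisted version}. There the involution \(P\) is replaced by the twisted odd involution, and the resulting quotient is again the evaluation module \(\mathcal{W}_{1,\epsilon}\) regarded as a \(\mathcal{U}^{\mathsf{tw}}(\epsilon)^P\)-module.

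The main obstacle is the bookkeeping of which point \(z_a\) and which completion \(\mathbb{O}\) correspond to the label \(a\). One must verify that \(x_1\) really acts by \(z - z_a\) rather than by a unit. If it acted by a unit, the quotient would be zero; the claimed isomorphism rules this out, but the argument has to confirm it. I would first verify this from the explicit formula in the construction of the bimodule and then conclude.
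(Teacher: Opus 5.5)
There is a genuine gap: your description of \(RC_{\alpha_a}\) omits its idempotents, and this breaks Step two. Since \(I = I_{\mathsf{even}} = \mathbb{Z}\), we have \(J^{\alpha_a} = \{(a,0),(a,1)\}\). So \(RC_{\alpha_a}\) is generated by \(x_1\), \(\c_1\) \emph{and} \(e(a,0), e(a,1)\), with \(\c_1 e(a,0) = e(a,1)\c_1\).

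By the PBW basis (Remark~\ref{rem:PBW for quiver Hecke--Clifford}), \(RC_{\alpha_a}/RC_{\alpha_a}x_1\) has basis \(e(a,0), e(a,1), \c_1 e(a,0), \c_1 e(a,1)\). It is \(4\)-dimensional and isomorphic to \(L(a) \oplus \Pi L(a)\), not to \(L(a)\). Your assertion that \(\{1,\c_1\}\) spans the quotient is false, because \(e(a,0)\) and \(e(a,1)\) are independent modulo \(RC_{\alpha_a}x_1\).

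Likewise, \(V'^{\otimes \alpha_a}_{\mathbb{O}}\) is not a single completed affinization. It is the direct sum of two copies of \(\Bbbk[[z-q^{-2a}]] \otimes_{\Bbbk[z^{\pm1}]} (\mathcal{W}_{1,\epsilon})_{\mathrm{aff}}\), one for each of \(e(a,0)\) and \(e(a,1)\), interchanged by \(\c_1\) through \(P\). Hence \(V'^{\otimes \alpha_a}_{\mathbb{O}}/V'^{\otimes \alpha_a}_{\mathbb{O}}x_1 \simeq \mathcal{W}_{1,\epsilon}e(a,0) \oplus \mathcal{W}_{1,\epsilon}e(a,1)\), which has twice the claimed dimension. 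Your two omissions cancel in the final formula, but the argument as written does not compute the right object.

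The missing step is to cut down by \(e(a,0)\), which is exactly what the paper does. After killing \(x\), one is left with modules over \(C_\beta = \langle \c_1, e(a,0), e(a,1)\rangle\), and \(L(a) = C_\beta e(a,0)\) is projective there. So \((\mathcal{W}_{1,\epsilon}e(a,0) \oplus \mathcal{W}_{1,\epsilon}e(a,1)) \otimes_{C_\beta} C_\beta e(a,0) \simeq \mathcal{W}_{1,\epsilon}e(a,0) \simeq \mathcal{W}_{1,\epsilon}\).

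In your own framework, the fix is to use the presentation \(L(a) \simeq RC_{\alpha_a}e(a,0)/RC_{\alpha_a}x_1e(a,0)\). Right exactness then gives \(V'^{\otimes \alpha_a}_{\mathbb{O}}e(a,0)/V'^{\otimes \alpha_a}_{\mathbb{O}}x_1e(a,0)\), and only one summand remains. On that summand \(x_1\) acts by a nonzero scalar multiple of \(z-q^{-2a}\) in the untwisted case. In the twisted case it acts by \((z-q^{-2a})/(z+q^{-2a})\), a uniformizer times a unit. This settles the point you flagged as the main obstacle.

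Two smaller corrections. First, in the twisted case the involution \(P\) is not replaced. Instead, the \(e(a,1)\)-summand is completed at \(q^{2a}\), and \(\c_1\) acts through the same \(P\) together with \(T_1 \colon X_1 \mapsto X_1^{-1}\). Second, the paper only asserts an isomorphism of super vector spaces. Your stronger identification of the \(\mathcal{U}(\epsilon)^P\)-module structure, including the spectral parameter, would need its own justification.
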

Note that we can not discuss the monoidality of these functors naively.
Indeed, \(\mathcal{U}(\epsilon)^{P}\) and \(\mathcal{U}^{\mathsf{tw}}(\epsilon)^{ P}\) may not be Hopf subsuperalgebras of \(\mathcal{U}(\epsilon)\), so the tensor product of modules over these algebras may not be defined.

\subsection{Organization}

The paper is organized as follows.
In Section \ref{sec:preliminaries}, we reviews the basic properties of superalgebras and their representation theory, as well as the formalism of bisupermodules and Morita superequivalence.
In Section \ref{sec:quiver hecke--clifford superalgebras}, we develops the foundational theory of (symmetric) quiver Hecke--Clifford superalgebras.
Specifically, we define and study the convolution product, dual modules, affinization, and (renormalized) R-matrices for representations of quiver Hecke--Clifford superalgebras.
In Section \ref{sec:schur--weyl duality via quiver hecke--clifford superalgebra}, we carries out the construction of a Schur--Weyl duality via quiver Hecke--Clifford superalgebras and discusses properties of the resulting Schur--Weyl functors in analogy with \cite{KKK}.

\section{Preliminaries}\label{sec:preliminaries}

In this section, we review the representation theory of superalgebras, following \cite{BE,KKT,K2, KL3}.

Throughout, let \(\Bbbk\) be a field with \(\ch \Bbbk \neq 2\) and \(\sqrt{-1} \in \Bbbk\).
A \textit{super vector space} over \(\Bbbk\) is a \(\mathbb{Z}/2\mathbb{Z}\)-graded \(\Bbbk\)-vector space
\[V = V_{\overline{0}} \oplus V_{\overline{1}}.\]
The \textit{parity} of \(v \in V_{\overline{i}}\) is defined by \(\Par(v) \coloneqq \overline{i}\).
An element \(v \in V\) is said to be \textit{even} (resp.\ \textit{odd}) if \(\Par(v) = \overline{0}\) (resp.\ \(\Par(v) = \overline{1}\)).

Let \(\Bbbk \sVect\) denote the category of super vector spaces over \(\Bbbk\) together with \(\Bbbk\)-linear maps.
For \(V, W \in \Bbbk \sVect\), the space \(\Hom(V,W)\) carries a natural super vector space structure
\begin{align*}
\Hom(V,W) &= \Hom(V,W)_{\overline{0}} \oplus \Hom(V,W)_{\overline{1}}, \\
\Hom(V,W)_{\overline{0}} &= \{f \in \Hom(V,W) \mid f(V_{\overline{i}}) \subset W_{\overline{i}}~(i = 0,1)\}, \\
\Hom(V,W)_{\overline{1}} &= \{f \in \Hom(V,W) \mid f(V_{\overline{i}}) \subset W_{\overline{i+1}}~(i = 0,1)\}.
\end{align*}

Equipped with the usual tensor product, the category \(\Bbbk \sVect\) is symmetric monoidal, with braiding given by the Koszul sign rule
\[T_{V,W} \colon V \otimes W \to W \otimes V; \quad v \otimes w \mapsto (-1)^{\Par(v)\Par(w)}w \otimes v.\]
All subsequent constructions are understood relative to this braiding.

\subsection{Superalgebras}

A super vector space \(A\) is called a \textit{superalgebra} if \(A\) is an algebra satisfying
\[A_{\overline{i}}A_{\overline{j}} \subset A_{{\overline{i+j}}}.\]

The \textit{underlying algebra} \(|A|\) of a superalgebra \(A\), obtained by forgetting the \(\mathbb{Z}/2\mathbb{Z}\)-grading, is an ordinary algebra.
Conversely, any \(\Bbbk\)-algebra \(A\) gives rise to a superalgebra \(A^{\Triv} = A_{\overline{0}} \oplus A_{\overline{1}}\) by setting \(A_{\overline{0}} = A\) and \(A_{\overline{1}} = 0\).

\begin{ex}
The \textit{Clifford superalgebra} is the (simple) superalgebra \(\mathcal{C}_{n}\) over \(\Bbbk\) defined by
\[\mathcal{C}_n = \langle \c_1, \ldots, \c_n \rangle / \langle \c_i \c_j + \c_j \c_i = 2\delta_{ij}~(1 \leq i,j \leq n) \rangle,\]
\[\Par(\c_i) = \overline{1}~(1 \leq i \leq n).\]
\end{ex}

Let \(A, B\) be superalgebras.
A linear map \(f \colon A \to B\) is called a \textit{morphism of superalgebras} if \(f \colon |A| \to |B|\) is an even algebra homomorphism.
The category of superalgebras over \(\Bbbk\) and their morphisms is denoted by \(\Bbbk \sAlg\).

Let \(A \in \Bbbk \sAlg\).
A super vector space \(M \in \Bbbk \sVect\) is called a \textit{left super \(A\)-module} if \(M\) is a left \(|A|\)-module satisfying \(A_{\overline{i}}.M_{\overline{j}} \subset M_{\overline{i+j}}\).
Similarly, \(M\) is called a \textit{right super \(A\)-module} if \(M\) is a right \(|A|\)-module satisfying \(M_{\overline{j}}.A_{\overline{i}} \subset M_{\overline{i+j}}\).
The left super \(A\)-module structures on \(M\) are in one-to-one correspondence with superalgebra morphisms \(\rho \colon A \to \End_{\Bbbk}(M)\).
Indeed, if \(M\) is a left super \(A\)-module, then the map \(\rho \colon A \to \End_{\Bbbk}(M)\) given by
\[\rho(a) = a.-\]
is a superalgebra morphism.
Conversely, given such a morphism \(\rho\), the rule
\[a.m = \rho(a)(m)\]
endows \(M\) with a super \(A\)-module structure.
These two constructions are mutually inverse.

Now let \(M, N\) be left super \(A\)-modules.
A \(\Bbbk\)-linear map \(f \colon M \longrightarrow N\) is called a \textit{morphism of left super \(A\)-modules} if it supercommutes with the action of \(A\).
More precisely,
\[f(ax) = (-1)^{\Par(f)\Par(a)}af(x)\]
holds for all \(a \in A\) and \(x \in M\).
Likewise, when \(M, N\) are right super \(A\)-modules, a linear map \(f \colon M \to N\) is called a \textit{morphism of right super \(A\)-modules} if \(f(xa) = f(x)a\) for all \(a \in A\) and \(x \in M\).

The category of left super \(A\)-modules and their morphisms is denoted by \(A \sMod\), and its full subcategory consisting of finite-dimensional super \(A\)-modules by \(A \smod\).
For \(M, N \in A \sMod\), let \(\Hom_A(M, N)\) denote the space of all module morphisms from \(M\) to \(N\).
Setting
\[\Hom_{A}(M,N)_{\overline{i}} = \Hom_{A}(M,N) \cap \Hom_{\Bbbk}(M,N)_{\overline{i}},\]
the space
\[\Hom_{A}(M,N) = \Hom_{A}(M,N)_{\overline{0}} \oplus \Hom_{A}(M,N)_{\overline{1}}\]
carries a natural super vector space structure.
The category \(\Mods A\) of right super \(A\)-modules and its full subcategory \(\mods A\) of finite-dimensional modules are defined analogously.

\begin{rem}
   Let \(M, N \in A \sMod\) and \(f \in \Hom_{A}(M,N)\).
   The linear map
   \[f^{\dagger} \colon M \to N; \quad x \mapsto (-1)^{\Par(f)\Par(x)}f(x)\]
   satisfies \(f^{\dagger}(ax) = af^{\dagger}(x)\) for all \(a \in A\).
   In other words, the involution
   \[(-)^{\dagger} \colon \Hom_{\Bbbk}(M,N) \to \Hom_{\Bbbk}(M,N)\]
   induces a bijection between \(\Hom_{A}(M,N)\) and \(\Hom_{|A|}(M,N)\).
\end{rem}

For \(A \in \Bbbk \sAlg\) and \(M \in A \sMod\), the \textit{parity change module} \(\Pi(M)\) of \(M\) is defined by
\begin{align*}
&\Pi(M) = \Pi(M)_{\overline{0}} \oplus \Pi(M)_{\overline{1}},~\Pi(M)_{\overline{i}} = \{\pi(x) \mid x \in M_{{\overline{i+1}}}\}, \\
&a.\pi(x) = \pi((-1)^{\Par(a)}a.x) ~(a \in A, x \in M).
\end{align*}
The assignment \(\Pi\) defines an auto-equivalence on \(A \sMod\) and \(A \smod\), called the \textit{parity change functor}.

\begin{rem}
Define an auto-equivalence \(\Pi'\) on \(A \sMod\) and \(A \smod\) by
\begin{align*}
&\Pi'(M) = \Pi'(M)_{{\overline{0}}} \oplus \Pi'(M)_{\overline{1}},~\Pi'(M)_{\overline{i}} = \{\pi'(x) \mid x \in M_{\overline{i+1}}\}, \\
&a.\pi'(x) = \pi'(a.x) ~(a \in A,~x \in M).
\end{align*}
An isomorphism \(\Pi' \simeq \Pi\) is provided by the even natural transformation \(\phi\) given by
\[\phi_M \colon \Pi'(M) \to \Pi(M); \quad \pi'(x) \mapsto (-1)^{\Par(x)}\pi(x).\]
\end{rem}

For \(A, B \in \Bbbk \sAlg\), the product \(m_{A \otimes B}\) on the super vector space \(A \otimes B\) is defined in terms of the products \(m_A, m_B\) of \(A\) and \(B\) by
\[m_{A \otimes B} \colon A \otimes B \otimes A \otimes B \xrightarrow{A \otimes T_{B,A} \otimes B} A \otimes A \otimes B \otimes B \xrightarrow{m_{A} \otimes m_{B}} A \otimes B;\]
explicitly,
\[(a_1 \otimes b_1).(a_2 \otimes b_2) = (-1)^{\Par(b_1)\Par(a_2)}a_1 a_2 \otimes b_1 b_2.\]
Under this product, \(A \otimes B\) becomes a superalgebra over \(\Bbbk\), called the \textit{tensor product algebra} of \(A\) and \(B\).

\begin{rem}
Given \(A_1, A_2 \in \Bbbk \sAlg\) and \(M_i \in A_i \sMod\) (\(i=1,2\)), the tensor product \(M_1 \otimes M_2\) carries a natural left super \(A_1 \otimes A_2\)-module structure.
More precisely, for representations \(\rho_i \colon A_i \to \End(M_i)\) (\(i=1,2\)), the corresponding representation \(\rho \colon A_1 \otimes A_2 \to \End(M_1 \otimes M_2)\) is given by the composition
\[\rho \colon A_1 \otimes A_2 \xrightarrow{\rho_1 \otimes \rho_2} \End(M_1) \otimes \End(M_2) \xrightarrow{\iota} \End(M_1 \otimes M_2),\]
where \(\iota\) is defined by
\[\iota(f \otimes g)(m_1 \otimes m_2) = (-1)^{\Par(g)\Par(m_1)}f(m_1) \otimes g(m_2).\]
The resulting action of \(A_1 \otimes A_2\) reads
\[(a_1 \otimes a_2).(m_1 \otimes m_2) \coloneqq (-1)^{\Par(a_2)\Par(m_1)}(a_1.m_1) \otimes (a_2.m_2).\]
\end{rem}

\subsection{Bisupermodules}

Let \(A \in \Bbbk \sAlg\) with product denoted by \(\cdot\).
A new superalgebra structure on \(A\) arises from the product
\[a \cdot_{\mathsf{op}} b \coloneqq (-1)^{\Par(a)\Par(b)}b \cdot a,\]
that is, \(\cdot_{\mathsf{op}} = \cdot \circ T_{A,A} \colon A \otimes A \to A\).
The superalgebra \((A, \cdot_{\mathsf{op}})\), denoted by \(A^{\mathsf{op}}\), is called the \textit{opposite superalgebra} of \(A\).

\begin{rem}
With this definition, the right super \(A\)-module structures and the left super \(A^{\mathsf{op}}\)-module structures on a super vector space \(M\) are in one-to-one correspondence.
Concretely, if \(M\) is a right super \(A\)-module with action denoted by \(.\), then the rule
\[a \star m \coloneqq (-1)^{\Par(a)\Par(m)}m.a\]
endows \(M\) with a left super \(A^{\mathsf{op}}\)-module structure \((M, \star)\).
Conversely, if \((M, \star)\) is a left super \(A^{\mathsf{op}}\)-module, then
\[m.a \coloneqq (-1)^{\Par(a)\Par(m)}a \star m\]
defines a right super \(A\)-module structure on \(M\).
These constructions are mutually inverse.
From now on, we identify right super \(A\)-modules with left super \(A^{\mathsf{op}}\)-modules via this correspondence.
\end{rem}

\begin{dfn}\label{dfn:antiautomorphism}
Let \(A \in \Bbbk \sAlg\).
A linear isomorphism \(\sigma \colon A \to A\) is called an \textit{antiautomorphism} if
\[\sigma(a \cdot b) = \sigma(b) \cdot_{\mathsf{op}} \sigma(a) = (-1)^{\Par(a)\Par(b)}\sigma(b) \cdot \sigma(a)\]
holds for all \(a, b \in A\).
\end{dfn}

Given an antiautomorphism \(\sigma\) of \(A\), any right super \(A\)-module \(M\) may be regarded as a left super \(A\)-module via
\[a.m = (-1)^{\Par(a)\Par(m)}m.\sigma(a)~(a \in A, m \in M).\]

\begin{dfn}
Let \(A, B \in \Bbbk \sAlg\), and let \(M\) be both a left super \(A\)-module and a right super \(B\)-module.
We say that \(M\) is an \((A, B)\)-\textit{bisupermodule} if
\[(a.m).b = a.(m.b)\]
holds for all \(a \in A\), \(b \in B\), and \(m \in M\).
\end{dfn}

\begin{rem}
Let \(A, B\) be superalgebras, and let \(M\) be an \((A, B)\)-bisupermodule.
Rewriting the right action of \(B\) as a left action of \(B^{\mathsf{op}}\), the actions of \(A\) and \(B^{\mathsf{op}}\) supercommute in \(\End(M)\).
Indeed, for all \(a \in A\), \(b \in B\), and \(m \in M\),
\begin{align*}
b \star(a.m) &= (-1)^{\Par(b)\Par(a.m)}(a.m).b \\
&= (-1)^{\Par(b)(\Par(a) + \Par(m))}a.(m.b) \\
&= (-1)^{\Par(a)\Par(b)}a.(b \star m).
\end{align*}
In particular, \(M\) carries a left super \(A \otimes B^{\mathsf{op}}\)-module structure.
Conversely, any left super \(A \otimes B^{\mathsf{op}}\)-module \(M\) is in particular a left super \(B^{\mathsf{op}}\)-module and hence a right super \(B\)-module whose right action commutes with that of \(A\).
Thus \(M\) is an \((A, B)\)-bisupermodule structure.
From now on, we identify \((A, B)\)-bisupermodules with left super \(A \otimes B^{\mathsf{op}}\)-modules via this correspondence.
\end{rem}

For a superalgebra \(B\), a left super \(B\)-module \(M\), and a right super \(B\)-module \(N\), the tensor product
\[M \otimes_B N\]
is defined as the tensor product of the underlying \(|B|\)-modules and inherits a natural super vector space structure.
Moreover, if \(A\) is a superalgebra and \(M\) is an \((A, B)\)-bisupermodule, then \(M \otimes_B N\) carries a left super \(A\)-module structure, and tensoring with \(M\) yields a functor
\[M \otimes_B - \colon B \sMod \to A \sMod.\]

A right super \(B\)-module \(L \in \Mods B\) is called a \textit{flat supermodule} if the functor
\[L \otimes_B - \colon B \sMod \to \Bbbk \sMod\]
is exact.
Equivalently, for every injective morphism \(f \colon M \to N\) of super \(B\)-modules, the induced map
\[f \otimes 1 \colon L \otimes_B M \to L \otimes_B N\]
is again injective.
This holds if and only if \(L\) is flat as a right \(|B|\)-module.

Two superalgebras \(A, B \in \Bbbk \sAlg\) are said to be \textit{Morita superequivalent} if there exists an \((A, B)\)-bisupermodule \(M\) that induces a Morita equivalence between the underlying algebras \(|A|\) and \(|B|\).

\begin{ex}[{\cite[Remark 2.10(ii)]{KKT}}]\label{ex:full even idempotent}
Let \(A \in \Bbbk \sAlg\).
An element \(e \in A\) is called a \textit{full even idempotent} if
\begin{enumerate}[(i)]
\item \(e \in A_{\overline{0}}\),
\item \(e^2 = e\),
\item \(AeA = A\).
\end{enumerate}
In this case, the following mutually quasi-inverse equivalences induce a Morita superequivalence between \(A\) and \(eAe\):
\begin{align*}
e.- &\colon A \sMod \to eAe \sMod, \\
Ae \otimes_{eAe} - &\colon eAe \sMod \to A \sMod.
\end{align*}
\end{ex}

\section{Quiver Hecke--Clifford superalgebras}\label{sec:quiver hecke--clifford superalgebras}

In this section, we develop the foundations of the representation theory of quiver Hecke--Clifford superalgebras.
Since our motivation lies in Schur--Weyl duality, the representation theory of symmetric quiver Hecke--Clifford superalgebras is of particular importance.
We therefore discuss it in detail, drawing on analogies with \cite{KKK} and related works.
The readers mainly interested in the construction of Schur--Weyl duality in Section \ref{sec:schur--weyl duality via quiver hecke--clifford superalgebra} may skip Sections \ref{subsec:graded dual}, and \ref{subsec:symmetric quiver Hecke--Clifford superalgebras}.

\subsection{Quiver Hecke(--Clifford) superalgebras}\label{subsec:quiver Hecke--Clifford superalgebras}

We recall the definitions of the quiver Hecke and quiver Hecke--Clifford superalgebras introduced in \cite{KKT}.

Let \(I\) be an index set.
An integer matrix \(A = [a_{ij}]_{i,j \in I}\) is called a \textit{generalized Cartan matrix} if:
\begin{enumerate}[(i)]
\item \(a_{ii} = 2~(i \in I)\),
   \item \(a_{ij} \leq 0~(i \neq j)\),
   \item \(a_{ij} = 0 \Leftrightarrow a_{ji} = 0\).
\end{enumerate}
The matrix \(A\) is said to be \textit{symmetrizable} if there exists a diagonal matrix \(D = \diag[d_i \in \mathbb{Z}_{> 0} \mid i \in I]\) such that \(DA\) is symmetric.

A \textit{Cartan datum} is a quintuple \((A,P,\varPi,\varPi^{\vee})\) consisting of:
\begin{enumerate}[(i)]
   \item \(A\): a symmetrizable generalized Cartan matrix,
   \item \(P\): a free abelian group (called the \textit{weight lattice}),
   \item \(\varPi = \{\alpha_i \in P \mid i \in I\}\): a linearly independent subset of \(P\) (called the \textit{set of simple roots}),
   \item \(\varPi^{\vee} = \{h_i \mid i \in I\} \subset P^{\vee} = \Hom_{\mathbb{Z}}(P,\mathbb{Z})\) (called the \textit{set of simple coroots}),
   \item \(\langle h_i,\alpha_j \rangle = a_{ij}~(i,j \in I)\), where \(\langle -,- \rangle \colon P^{\vee} \times P \to \mathbb{Z}\) is the natural pairing.
\end{enumerate}

The subgroup \(\mathsf{Q} = \bigoplus_{i \in I}\mathbb{Z}\alpha_i \subset P\) is called the \textit{root lattice}.
Set \(\mathsf{Q}^{+} = \sum_{i \in I}\mathbb{Z}_{\geq 0}\alpha_i\).
For \(\beta = \sum_{i \in I} m_i \alpha_i \in \mathsf{Q}^{+}\), its \textit{height} is defined by \(|\beta| = \sum_{i \in I} m_i\).

\begin{dfn}[{\cite[\(\S 3.6\)]{KKT}}]
   A pair consisting of a Cartan datum \((A,P,\varPi,\varPi^{\vee})\) and a decomposition \(I = I_{\mathsf{even}} \sqcup I_{\mathsf{odd}}\) is called a \textit{Cartan superdatum} if
   \[a_{ij} \in 2 \mathbb{Z} ~(i \in I_{\mathsf{odd}}).\]
\end{dfn}
   For \(i,j \in I\), let \(S_{ij} \subset \mathbb{Z}^2\) denote the set of pairs \((r,s) \in \mathbb{Z}^2\) satisfying:
   \begin{enumerate}[(i)]
      \item \(0 \leq r \leq -a_{ij},~0 \leq s \leq -a_{ji}\),
      \item \(a_{ji}r + a_{ij}s = -a_{ij}a_{ji}\),
      \item \(r \in 2\mathbb{Z} ~(i \in I_{\mathsf{odd}})\),
      \item \(s \in 2\mathbb{Z} ~(j \in I_{\mathsf{odd}})\).
   \end{enumerate}
   Let \(\{t_{i,j;s,t}\}_{i \neq j, (s,t) \in S_{ij}}\) be a family of scalars satisfying:
   \begin{enumerate}[(a)]
      \item \(t_{i,j;r,s} = t_{j,i;s,r}\),
      \item \(t_{i,j;-a_{ij},0} \in \Bbbk^{\times}\).
   \end{enumerate}
   The associated family of polynomials \(\{\mathcal{Q}_{i,j}(u,v)\}_{i,j \in I}\) is defined by
   \[\mathcal{Q}_{ij}(w,z) = \cdots\]
   where we set \(\mathcal{Q}_{ij} = 0\) for \(i = j\).
   For \(\beta \in \mathsf{Q}^{+}\) with \(|\beta| = n\), set
   \[I^{\beta} = \{\nu = (\nu_1,\ldots,\nu_n) \in I^n \mid \sum_{i=1}^{n} \alpha_{\nu_i} = \beta\}.\]
\begin{dfn}[{\cite[Definition 3.10]{KKT}}]
   Given the data \((I,A,\mathcal{Q})\) and \(\beta \in \mathsf{Q}^{+}\), the \textit{quiver Hecke superalgebra} \(R_{\beta}\) is the \(\Bbbk\)-superalgebra generated by the elements \(\{e(\nu)\}_{\nu \in J^{\beta}}\),
   \(\{x_i\}_{1 \leq i \leq n}\),
   \(\{\sigma_{k}\}_{1 \leq k \leq n-1}\) with parity
   \[\Par(e(\nu)) = 0, \Par(x_i e(\nu)) = \Par(\nu_i), \Par(\sigma_k e(\nu)) = \Par(\nu_k) \Par(\nu_{k+1}) \]
   subject to the following relations:
   \begin{enumerate}[(R1)]
      \item \(e(\nu)e(\mu) = \delta_{\nu,\mu}e(\nu), \sum_{\nu \in I^{\beta}}e(\nu) = 1\),
      \item \(x_i x_j e(\nu) = \begin{cases}
         -x_j x_i e(\nu) & \text{if $i \neq j$ and $\nu_i,\nu_j \in I_{\mathsf{odd}}$},\\
         x_j x_i e(\nu) & \text{otherwise,}
      \end{cases}\)
      \item \(\sigma_{k} x_i e(\nu) = (-1)^{\Par(\nu_i)\Par(\nu_k)\Par(\nu_{k+1})}x_i \sigma_k e(\nu)\) if \(i \neq k,k+1\),
      \item \begin{align*}
         &(\sigma_{k}x_{k+1} - (-1)^{\Par(\nu_k)\Par(\nu_{k+1})}x_{k}\sigma_{k})e(\nu)\\
         &= (x_{k+1}\sigma_{k} - (-1)^{\Par(\nu_k)\Par(\nu_{k+1})}\sigma_{k}x_{k})e(\nu) = \begin{cases}
            e(\nu) & \text{if $\nu_k = \nu_{k+1}$,} \\
            0 & \text{otherwise,}
         \end{cases}
      \end{align*}
      \item \(\sigma_{k}^2e(\nu) = Q_{\nu_k,\nu_{k+1}}(x_k,x_{k+1})e(\nu)\),
      \item \(\sigma_{k}\sigma_{l}e(\nu) = (-1)^{\Par(\nu_k)\Par(\nu_{k+1}\Par(\nu_l)\Par(\nu_{l+1}))}\sigma_{l}\sigma_{k}e(\nu)\) if \(|k-l| > 1\),
      \item \begin{align*}
         &(\sigma_k \sigma_{k+1} \sigma_k - \sigma_{k+1} \sigma_k \sigma_{k+1})e(\nu) \\
         &= \begin{cases}
            \frac{Q_{\nu_k,\nu_{k+1}}(x_{k+2,x_{k+1}}) - Q_{\nu_{k},\nu_{k+1}}(x_k,x_{k+1})}{x_{k+2} - x_{k}}e(\nu) & \text{if $\nu_k = \nu_{k+2} \in I_{\mathsf{even}}$,} \\
            (-1)^{\Par(\nu_{k+1})}(x_{k+2} - x_{k}) \frac{Q_{\nu_k,\nu_{k+1}}(x_{k+2,x_{k+1}}) - Q_{\nu_{k},\nu_{k+1}}(x_k,x_{k+1})}{x_{k+2}^2 - x_{k}^2}e(\nu) & \text{if $\nu_k = \nu_{k+2} \in I_{\mathsf{odd}}$,} \\
            0 & \text{otherwise.}
         \end{cases}
      \end{align*}
   \end{enumerate}
\end{dfn}

If \(I_{\mathsf{odd}} = \varnothing\), then all generators of \(R_{\beta}\) are even, and \(R_{\beta}\) reduces to the usual quiver Hecke algebra.

   Define a new index set \(J\) by
   \[J = I_{\mathsf{even}} \times \{0,1\} \sqcup I_{\mathsf{odd}} \times \{0\},\]
   and, for \(i,j \in J\), a family of polynomials \(\tilde{\mathcal{Q}}_{i,j}\) by
   \[\tilde{\mathcal{Q}}_{i,j}(u,v) = \mathcal{Q}_{\pi_1(i),\pi_1(j)}((-1)^{\pi_2(i)}u,(-1)^{\pi_2(j)}v) \in \Bbbk[u^{1 + \pi_2(i)},v^{1 + \pi_2(j)}].\]
   For \(\beta \in \mathsf{Q}^{+}\) with \(|\beta| = n\), set
   \[J^{\beta} = \{\nu = (\nu_1,\ldots,\nu_n) \in J^n \mid \sum_{i=1}^{n} \alpha_{\pi_1(\nu_i)} = \beta\}.\]

\begin{dfn}[{\cite[Definition 3.5]{KKT}}]
   Given the data \((J,A,\tilde{\mathcal{Q}})\) and \(\beta \in \mathsf{Q}^{+}\), the \textit{quiver Hecke--Clifford superalgebra} \(RC_{\beta}\) is the \(\Bbbk\)-superalgebra generated by the even elements \(\{e(\nu)\}_{\nu \in J^{\beta}}\),
   \(\{x_i\}_{1 \leq i \leq n}\),
   \(\{\sigma_{k}\}_{1 \leq k \leq n-1}\)
   and the odd elements \(\{\c_i\}_{1 \leq i \leq n}\), subject to the following relations:
   \begin{enumerate}[(RC1)]
      \item\(e(\mu)e(\nu) = \delta_{\mu,\nu}e(\mu),~1 = \sum_{\nu \in J^{\beta}} e(\nu),~x_{p}e(\nu) = e(\nu)x_{p},~\c_{p}e(\nu) = e(c_p(\nu))\c_p,\)
      \item \(x_{p}x_{q}e(\nu) = x_{q}x_{p}e(\nu),~\c_{p}\c_{q} + \c_{q}\c_{p} = 2\delta_{pq},\)
      \item \(\c_{p}x_{q} = (-1)^{\delta_{pq}}x_{q}\c_{p},\)
      \item \(\sigma_{a}e(\nu) = e(s_{a}(\nu))\sigma_{a},~\sigma_{a}\c_{p} = \c_{s_{a}(p)}\sigma_{a},\)
      \item \(\sigma_{a}x_{p}e(\nu) = x_{p}\sigma_{a}e(\nu)~(p \neq a,a+1),\)
      \item \(\sigma_{a}x_{a+1} - x_{a}\sigma_{a} = \begin{cases}
            e(\nu) & \text{if $\nu_a = \nu_{a+1}$}, \\
            -\c_{a}\c_{a+1}e(\nu) & \text{if $\nu_a = c(\nu_{a+1})$}, \\
            0 & \text{otherwise,}
         \end{cases}\)
      \item \(x_{a+1}\sigma_{a} - \sigma_{a}x_{a} = \begin{cases}
            e(\nu) & \text{if $\nu_a = \nu_{a+1}$}, \\
            \c_{a}\c_{a+1}e(\nu) & \text{if $\nu_a = c(\nu_{a+1})$}, \\
            0 & \text{otherwise,}
         \end{cases}\)
      \item \(\sigma_{a}^2e(\nu) = \tilde{Q}_{\nu_{a},\nu_{a+1}}(x_a,x_{a+1})e(\nu),\)
      \item \(\sigma_{a}\sigma_{b} = \sigma_{b}\sigma_{a}~(|a-b|>1),\)
      \item \begin{align*}
         &(\sigma_{a+1}\sigma_{a}\sigma_{a+1} - \sigma_{a}\sigma_{a+1}\sigma_{a})e(\nu) \\
         &= \begin{cases}
            \frac{\tilde{Q}_{\nu_{a},\nu_{a+1}}(x_{a+2},x_{a+1}) - \tilde{Q}_{\nu_{a},\nu_{a+1}}(x_{a},x_{a+1})}{x_{a+2} - x_a} & \text{if $\nu_a = \nu_{a+2}$}, \\
            \frac{\tilde{Q}_{\nu_{a},\nu_{a+1}}(x_{a+2},x_{a+1}) - \tilde{Q}_{\nu_{a},\nu_{a+1}}(-x_{a},x_{a+1})}{x_{a+2} + x_a} & \text{if $\nu_a = c(\nu_{a+2})$}, \\
            0 & \text{otherwise.}
         \end{cases}
      \end{align*}
   \end{enumerate}
   Here, for \((i,\varepsilon) \in J\),
   \[c(i,\varepsilon) = \begin{cases} (i,1 - \varepsilon) & \text{if } i \in I_{\mathsf{even}}, \\ (i,\varepsilon) & \text{if } i \in I_{\mathsf{odd}} \end{cases}\]
   denotes the involution on \(J\), and, for \(\nu \in J^{\beta}\) and \(1 \leq i \leq n\),
   \[c_{i}(\nu) = (\nu_1,\ldots,\nu_{i-1},c(\nu_i),\nu_{i+1},\ldots,\nu_n).\]
   We further set
   \begin{align*}
      J^c &\coloneqq \{(i,\varepsilon) \in J \mid c(i,\varepsilon) = (i,\varepsilon)\} \\
      &= \{(i,\varepsilon) \in J \mid i \in I_{\mathsf{odd}} \}.
   \end{align*}
\end{dfn}
The relations of \(RC_{\beta}\) are homogeneous with respect to the \(\mathbb{Z}\)-grading
\begin{align*}
      \deg(e(\nu)) &= 0, \\
      \deg(x_{i}e(\nu)) &= 2, \\
      \deg(\sigma_{k}e(\nu)) &= -a_{\pi_{1}(\nu_k) \pi_{1}(\nu_{k+1})}, \\
      \deg(\c_i) &= 0,
\end{align*}
and \(RC_{\beta}\) thereby becomes a \(\mathbb{Z}\)-graded algebra.
The superalgebra \(RC_{\beta}\) further admits an antiautomorphism \(\psi \colon RC_{\beta} \to RC_{\beta}^{\mathsf{op}}\) given by
   \[\begin{array}{ccc}
      RC_{\beta} & \stackrel{\sim}{\longrightarrow} & RC_{\beta}^{\mathsf{op}} \\
      \rotatebox{90}{$\in$} & & \rotatebox{90}{$\in$} \\
      e(\nu) & \longmapsto & e(\nu) \\
      x_i & \longmapsto & x_i \\
      \sigma_{k} & \longmapsto & \sigma_k \\
      \c_i & \longmapsto & -\sqrt{-1} \c_i
   \end{array}\]
   where \(RC_{\beta}^{\mathsf{op}}\) denotes the superalgebra on the same generators as \(RC_{\beta}\), with the relation (RC2) involving \(\c_i\) replaced by
   \[\c_p \c_q + \c_q \c_p = -2\delta_{p,q}.\]
\begin{rem}[{\cite[Corollary 3.9]{KKT}}]\label{rem:PBW for quiver Hecke--Clifford}
   For each \(w \in \mathfrak{S}_n\), fix a reduced expression \(w = s_{i_1} \cdots s_{i_p}\). Then the set
   \[\{x_{1}^{a_1} \cdots x_{n}^{a_{n}}\c_{1}^{\eta_1} \cdots \c_{n}^{\eta_n}\sigma_{w}e(\nu) \mid a_i \in \mathbb{Z}_{\geq 0},~\eta_i \in \mathbb{Z}/2\mathbb{Z},~w \in \mathfrak{S}_n, \nu \in J^{\beta}\}\]
   forms a basis of \(RC_{\beta}\).
\end{rem}

   Let \(RC_{\beta} \sgMod\) denote the (non-full) subcategory of \(RC_{\beta} \sMod\) consisting of \(\mathbb{Z}\)-graded supermodules whose \(\mathbb{Z}\)-grading is compatible with the \(\mathbb{Z}/2\mathbb{Z}\)-grading, together with all morphisms that preserve the \(\mathbb{Z}\)-grading.
   The full subcategory of \(RC_{\beta} \sgMod\) consisting of finite-dimensional modules is denoted by \(RC_{\beta} \sgmod\).

   Compatibility of the \(\mathbb{Z}\)-grading and the \(\mathbb{Z}/2\mathbb{Z}\)-grading means that
   \[M = M_{\overline{0}} \oplus M_{\overline{1}} = \bigoplus_{i \in \mathbb{Z}} M_i\]
   is a decomposition relative to each grading satisfying
   \[M_{\overline{p}} = \bigoplus_{i \in \mathbb{Z}} M_i \cap M_{\overline{p}}~(p=0,1).\]

   For \(M \in RC_{\beta} \sgMod\), write \(qM\) for the grading shift of \(M\) by \(1\), defined by
   \[(qM)_n = M_{n-1}~(n \in \mathbb{Z}).\]
   The shift \(q.-\) defines an auto-equivalence on \(RC_{\beta} \sgMod\) and \(RC_{\beta} \sgmod\).
   The space of degree-\(r\) morphisms between \(M,N \in  RC_{\beta} \sgMod\) is then given by
   \[\Hom_{RC_{\beta}}(q^{-r}M,N) = \Hom_{RC_{\beta}}(M,q^{r}N),\]
   and we set
   \[\HOM_{RC_{\beta}}(M,N) \coloneqq \bigoplus_{r \in \mathbb{Z}} \Hom_{RC_{\beta}}(M,q^{r}N).\]

\begin{dfn}\label{dfn:convolution product}
   Let \(\beta_1,\beta_2 \in \mathsf{Q}^{+}\) and set \(| \beta_i | = n_i~(i = 1,2)\).
   The idempotent \(e(\beta_1,\beta_2) \in RC_{\beta_1 + \beta_2}\) is defined by
   \[e(\beta_1,\beta_2) = \sum_{\nu \in J^{\beta_1 + \beta_2}, (\nu_1,\ldots,\nu_{n_1}) \in J^{\beta_1}}e(\nu).\]
   There is then an algebra homomorphism
   \[\begin{array}{ccc}
      RC_{\beta_1} \otimes RC_{\beta_2} & \stackrel{\sim}{\longrightarrow} & e(\beta_1,\beta_2)RC_{\beta_1 + \beta_2}e(\beta_1,\beta_2) \\
      \rotatebox{90}{$\in$} & & \rotatebox{90}{$\in$} \\
      e(\nu) \otimes e(\eta) & \longmapsto & e(\nu * \eta) \\
      x_i \otimes 1 & \longmapsto & x_i e(\beta_1,\beta_2) \\
      1 \otimes x_i & \longmapsto & x_{n_1 + i} e(\beta_1,\beta_2) \\
      \sigma_{k} \otimes 1 & \longmapsto & \sigma_k e(\beta_1,\beta_2) \\
      1 \otimes \sigma_k & \longmapsto & \sigma_{n_1 + k} e(\beta_1,\beta_2) \\
      \c_i \otimes 1 & \longmapsto & \c_i e(\beta_1,\beta_2) \\
      1 \otimes \c_i & \longmapsto & \c_{n_1 + i} e(\beta_1,\beta_2)
   \end{array}\]
   where \(\nu * \eta = (\nu_1,\ldots,\nu_{n_1},\eta_1,\ldots,\eta_{n_2}) \in J^{\beta_1 + \beta_2}\).

   For \(M_i \in RC_{\beta_i} \sgMod\), the \textit{convolution product} (or \textit{induced module}) \(M_1 \circ M_2 \in RC_{\beta_1 + \beta_2} \sgMod\) is defined by
   \begin{align*}
      M_1 \circ M_2 &= \Ind^{RC_{\beta_1 + \beta_2}}_{RC_{\beta_1} \otimes RC_{\beta_2}} M_{1} \otimes M_{2}\\
      &= RC_{\beta_1 + \beta_2}e(\beta_1,\beta_2) \otimes_{(RC_{\beta_1} \otimes RC_{\beta_2})}(M_1 \otimes_{\Bbbk} M_2).
   \end{align*}
   The \textit{coinduced module} \(\Coind^{RC_{\beta_1 + \beta_2}}_{RC_{\beta_1,\beta_2}} M_1 \otimes M_2\) is defined analogously by
   \[\Coind^{RC_{\beta_1 + \beta_2}}_{RC_{\beta_1,\beta_2}} M_{1} \otimes M_{2} = \HOM_{RC_{\beta_1,\beta_2}}(e(\beta_1,\beta_2)RC_{\beta_1 + \beta_2},M_1 \otimes M_2),\]
   on which an element \(a \in RC_{\beta_1 + \beta_2}\) acts via
   \[(a.f)(b) = (-1)^{\Par(a)(\Par(f) + \Par(b))}f(b.a).\]
\end{dfn}

\begin{rem}\label{rem:convolution product of finite dimensional module is finite dimensional}
   By Remark \ref{rem:PBW for quiver Hecke--Clifford},
   \[RC_{\beta_1 + \beta_2} = \bigoplus_{w \in \mathfrak{S}_{n_1,n_2}} \sigma_{w}.RC_{\beta_1,\beta_2},\]
   which yields
   \begin{align*}
      M_1 \circ M_2 &=RC_{\beta_1 + \beta_2}e(\beta_1,\beta_2) \otimes_{RC_{\beta_1} \otimes RC_{\beta_2}} M_1 \otimes M_2 \\
      &\simeq \bigoplus_{w \in \mathfrak{S}_{n_1,n_2}} \sigma_{w}.(M_1 \otimes M_2).
   \end{align*}
   In particular, if \(M_1\) and \(M_2\) are finite-dimensional, then so is \(M_1 \circ M_2\).
   Recall that \(\mathfrak{S}_{n_1,n_2} \subset \mathfrak{S}_{n_1 + n_2}\) is defined by
   \[\mathfrak{S}_{n_1,n_2} = \{w \in \mathfrak{S}_{n_1+n_2} \mid w(k) < w(k+1) (k \neq n_1)\}.\]
   In other words, \(\mathfrak{S}_{n_1,n_2}\) is the set of minimal length coset representatives of \(\mathfrak{S}_{n_1+n_2}/\mathfrak{S}_{n_1} \times \mathfrak{S}_{n_2}\).
   Let \(w[n_1,n_2]\) denote the unique longest element in \(\mathfrak{S}_{n_1,n_2}\).
   Likewise, \(\Coind_{\beta_1,\beta_2}^{\beta_1 + \beta_2} M_1 \otimes M_2\) is finite-dimensional.
\end{rem}
   For \(M \in RC_{\beta_1 + \beta_2} \sgMod\), the \textit{restricted module} \(\Res_{\beta_1,\beta_2}^{\beta_1 + \beta_2} M \in RC_{\beta_1} \otimes RC_{\beta_2} \sgMod\) is defined by
   \[\Res_{\beta_1,\beta_2}^{\beta_1 + \beta_2}M \coloneqq e(\beta_1,\beta_2)M.\]

Frobenius reciprocity holds for these functors.
As this is an analogue of a well-known result, we omit the proof.

\begin{prop}[cf. {\cite[(2.2)]{LV}}]\label{prop:hom-tensor duality for quiver Hecke--Clifford superalgebra}
   Let \(L \in RC_{\beta_1} \sgMod\), \(M \in RC_{\beta_2} \sgMod\), and \(N \in RC_{\beta_1 + \beta_2} \sgMod\). Then the following natural isomorphisms of graded modules hold:
   \begin{align*}
      \HOM_{RC_{\beta_1 + \beta_2}}(\Ind_{\beta_1,\beta_2}^{\beta_1 + \beta_2}L \otimes M,N) &\simeq \HOM_{RC_{\beta_1} \otimes RC_{\beta_2}}(L \otimes M,\Res_{\beta_1,\beta_2}^{\beta_1 + \beta_2}N), \\
      \HOM_{RC_{\beta_1} \otimes RC_{\beta_2}}(\Res_{\beta_1,\beta_2}^{\beta_1 + \beta_2}N,L \otimes M) &\simeq \HOM_{RC_{\beta_1 + \beta_2}}(N,\Coind_{\beta_1,\beta_2}^{\beta_1 + \beta_2}L \otimes M).
   \end{align*}
\end{prop}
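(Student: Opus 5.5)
The plan is to deduce both isomorphisms from the standard super tensor--hom adjunction, applied to the \((RC_{\beta_1+\beta_2}, RC_{\beta_1}\otimes RC_{\beta_2})\)-bisupermodule \(RC_{\beta_1+\beta_2}e(\beta_1,\beta_2)\) and to the \((RC_{\beta_1}\otimes RC_{\beta_2}, RC_{\beta_1+\beta_2})\)-bisupermodule \(e(\beta_1,\beta_2)RC_{\beta_1+\beta_2}\). Write \(A = RC_{\beta_1+\beta_2}\), \(B = RC_{\beta_1}\otimes RC_{\beta_2}\) and \(e = e(\beta_1,\beta_2)\). The algebra homomorphism \(B \to eAe\) of Definition \ref{dfn:convolution product} makes \(Ae\) a right super \(B\)-module and \(eA\) a left super \(B\)-module. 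Everything is graded, and every sign comes from the Koszul rule, so I work degree by degree on \(\HOM\).

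For the first isomorphism, I define \(\Phi\colon \HOM_A(Ae\otimes_B(L\otimes M),N)\to \HOM_B(L\otimes M,eN)\) by \(\Phi(f)(v)=f(e\otimes v)\). Its inverse is \(\Psi(g)(a\otimes v)=(-1)^{\Par(g)\Par(a)}a.g(v)\). I will check four things in order:
\begin{enumerate}[(i)]
\item \(\Phi(f)\) lands in \(eN\), because \(e\otimes v = e\cdot e\otimes v\).
\item \(\Phi(f)\) is a super \(B\)-morphism, using \(b.(e\otimes v) = be\otimes v = e\otimes b.v\) for \(b\in B\) viewed inside \(eAe\).
\item \(\Psi(g)\) is well defined on the balanced tensor product, i.e.\ it respects \(ab\otimes v = a\otimes b.v\). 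This reduces to the super \(B\)-linearity of \(g\), and the sign \((-1)^{\Par(g)\Par(b)}\) combines correctly with \((-1)^{\Par(g)\Par(a)}\).
\item \(\Psi(g)\) supercommutes with \(A\).
\end{enumerate}
Finally, \(\Phi\) and \(\Psi\) are mutually inverse, since \(Ae\otimes_B(-)\) is generated by elements \(e\otimes v\). Both maps preserve the \(\mathbb{Z}\)-grading and parity, and they are natural in \(L\), \(M\) and \(N\).

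For the second isomorphism, I define \(\HOM_B(eN,L\otimes M)\to \HOM_A(N,\HOM_B(eA,L\otimes M))\) by \(h\mapsto \tilde h\), where \(\tilde h(n)(b) = (-1)^{\cdots}h(b.n)\). The inverse is \(k\mapsto (n\mapsto k(n)(e))\), restricted to \(eN\). I will first check that \(\tilde h(n)\) is left \(B\)-linear on \(eA\). I will then check that \(\tilde h\) supercommutes with the \(A\)-action \((a.f)(b) = (-1)^{\Par(a)(\Par(f)+\Par(b))}f(b.a)\) given in Definition \ref{dfn:convolution product}. The sign exponent will be fixed as \(\Par(h)\Par(b)\) together with whatever correction makes this compatibility hold. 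That these maps are inverse follows because a \(B\)-linear map on \(eA\) is determined by its value at \(e\): for \(b\in eA\) we have \(b = e\cdot b\), and \(f(b) = b\)-translate of \(f(e)\) via the \(A\)-action.

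The only real obstacle is sign bookkeeping. The action on \(\Coind\) carries a nontrivial sign, \(B\) is itself a super tensor product, and the element \(e\) is even, so I must ensure that every sign in the definitions of \(\Psi\) and \(\tilde h\) is consistent. To settle this I would first verify the statements on homogeneous elements, using the \(\dagger\)-trick of the Remark after the definition of morphisms, which converts super morphisms into ordinary \(|A|\)-module morphisms. With that conversion, both statements reduce to the classical adjunctions for the ordinary algebras \(|A| \supset e|A|e \leftarrow |B|\). The grading then requires no extra argument, because every map involved is homogeneous of degree \(0\) relative to the shift \(q\).
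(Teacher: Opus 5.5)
Your proposal is correct and is the standard super tensor--hom adjunction the paper has in mind; the paper omits the proof, calling it an analogue of \cite[(2.2)]{LV}. For the write-up, the sign that works in the second isomorphism is \(\tilde h(n)(b)=(-1)^{\Par(b)\Par(n)}h(b.n)\), with no \(\Par(h)\Par(b)\) term. Also, the inverse property should be justified by \(k(n)(b)=(-1)^{\Par(b)\Par(n)}k(b.n)(e)\) for \(A\)-linear \(k\) into the coinduced module, since a single element of the coinduced module is not determined by its value at \(e\).
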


\subsection{Graded dual modules and convolution product}\label{subsec:graded dual}

We introduce (co)induced modules and dual modules, and discuss their interrelations (cf. \cite{LV}).

   Let \(RC_{\beta} \sgMod_{\mathsf{L}}\) denote the full subcategory of \(RC_{\beta} \sgMod\) consisting of modules \(M \in RC_{\beta} \sgMod\) that satisfy the following two conditions:
   \begin{enumerate}[(i)]
      \item each \(M_i\) is finite-dimensional;
      \item \(M_k = 0\) for sufficiently small \(k \in \mathbb{Z}\).
   \end{enumerate}
   For \(M \in RC_{\beta} \sgMod_{\mathsf{L}}\), the \textit{graded dual} \(M^*\) of \(M\) is defined by
   \[M^* = \bigoplus_{i \in \mathbb{Z}}\Hom_{\Bbbk}(M_i,\Bbbk).\]
   The space \(M^*\) carries a natural \((\mathbb{Z}/2\mathbb{Z},\mathbb{Z})\)-grading whose \(\mathbb{Z}\)-component reads
   \[(M^{*})_i = \Hom_{\Bbbk}(M_{-i},\Bbbk).\]
   We endow \(M^*\) with a right \(RC_{\beta}\)-module structure via
   \[(f.a)(m) \coloneqq f(a.m)~(f \in M^{*}, a \in RC_{\beta}, m \in M_i),\]
   and twist this action by the anti-involution of \(RC_{\beta}\) to obtain \(M^* \in RC_{\beta} \sgMod\).

\begin{rem}
   The graded dual \(M^{*} \in RC_{\beta} \sgMod\) satisfies the following two conditions:
   \begin{enumerate}
      \item[(i)] each \((M^{*})_i\) is a finite-dimensional super vector space;
      \item[(ii)$'$] \((M^{*})_{k} = 0\) for sufficiently large \(k \in \mathbb{Z}\).
   \end{enumerate}
   Let \(RC_{\beta} \sgMod_{\mathsf{L}'}\) denote the full subcategory of \(RC_{\beta} \sgMod\) consisting of objects satisfying these two conditions.
   For \(N \in RC_{\beta} \sgMod_{\mathsf{L}'}\), the graded dual \(N^{*}\) is defined analogously and satisfies \(N^* \in RC_{\beta} \sgMod_{\mathsf{L}}\).
   For \(M \in RC_{\beta} \sgMod_{\mathsf{L}}\), there is a natural isomorphism \((M^*)^* \simeq M\).
   Hence the functor
   \[(-)^{*} \colon RC_{\beta} \sgMod_{\mathsf{L}} \to RC_{\beta} \sgMod_{\mathsf{L}'}\]
   is a contravariant equivalence.
   Since \(RC_{\beta} \sgmod\) is a full subcategory of \(RC_{\beta} \sgMod_{\mathsf{L}} \cap RC_{\beta} \sgMod_{\mathsf{L}'}\) and is stable under \((-)^{*}\), the graded dual \((-)^{*}\) induces a contravariant auto-equivalence on \(RC_{\beta} \sgmod\).
\end{rem}

\begin{lem}[cf. {\cite[Theorem 2.2]{LV},\cite[Theorem 6.2]{M2}}]\label{lem:ind and coind are isom}
   For \(M_i \in RC_{\beta_i} \sgMod_{\mathsf{L}}~(i=1,2)\), there is an isomorphism
   \[\Ind_{\beta_1,\beta_2}^{\beta_1 + \beta_2} M_1 \otimes M_2 \simeq q^{-(\beta_1,\beta_2)}\Coind_{\beta_2,\beta_1}^{\beta_2 + \beta_1} M_2 \otimes M_1.\]
\end{lem}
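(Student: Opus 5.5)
The plan is to follow the standard argument for quiver Hecke algebras (Lauda--Vazirani, Manning): build an explicit nonzero homomorphism from \(\Ind\) into \(\Coind\) via Frobenius reciprocity, then show it is bijective by comparing the filtrations coming from the PBW basis of Remark \ref{rem:PBW for quiver Hecke--Clifford}. Set \(n_i = |\beta_i|\) and \(w = w[n_2,n_1]\), the longest element of \(\mathfrak{S}_{n_2,n_1}\), which sends positions \(1,\ldots,n_2\) to \(n_1+1,\ldots,n_1+n_2\). We have \(\sigma_w e(\beta_1,\beta_2) = e(\beta_2,\beta_1)\sigma_w\), and conjugation by \(\sigma_w\) carries the generators of \(RC_{\beta_1}\otimes RC_{\beta_2}\) to those of \(RC_{\beta_2}\otimes RC_{\beta_1}\) up to lower terms. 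This uses (RC4) for the \(\c_p\), which permute exactly, and (RC5)--(RC7) for the \(x_p\). The correction terms have the form \(e(\nu)\) or \(\pm\c_a\c_{a+1}e(\nu)\) times shorter \(\sigma\)'s.

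By Proposition \ref{prop:hom-tensor duality for quiver Hecke--Clifford superalgebra}, a map \(\Ind_{\beta_1,\beta_2} M_1\otimes M_2 \to \Coind_{\beta_2,\beta_1} M_2\otimes M_1\) corresponds to an \(RC_{\beta_1}\otimes RC_{\beta_2}\)-map
\[M_1\otimes M_2 \to e(\beta_1,\beta_2)\Coind_{\beta_2,\beta_1} M_2\otimes M_1 .\]
I will define this map by \(m_1\otimes m_2 \mapsto f_{m_1,m_2}\), where
\[f_{m_1,m_2}(a) = \text{the component of } a\,\sigma_{w^{-1}} (m_1\otimes m_2) \text{ in the summand } \sigma_{e}(M_2\otimes M_1).\]
Here the identity coset is taken in the decomposition of the restriction of \(RC_{\beta_1+\beta_2}e(\beta_2,\beta_1)\) along \(e(\beta_2,\beta_1)RC\) given in Remark \ref{rem:convolution product of finite dimensional module is finite dimensional}, and \(M_1\otimes M_2\) is transported to \(M_2\otimes M_1\) via the Koszul braiding \(T\). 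Equivalently, I use a Mackey-type decomposition of \(e(\beta_2,\beta_1)RC_{\beta_1+\beta_2}e(\beta_1,\beta_2)\) as a bimodule filtered by double cosets, and project to the top piece \(\sigma_w (RC_{\beta_1}\otimes RC_{\beta_2})\). This top piece is isomorphic to \(RC_{\beta_2}\otimes RC_{\beta_1}\) twisted by the conjugation isomorphism, because all lower double cosets have strictly smaller length. The key steps are to check that the projection is a bimodule map modulo lower terms, that it is even, and that it has degree shift \(-(\beta_1,\beta_2)\). The degree claim holds because \(\deg\sigma_w e(\nu)=-\sum a_{\pi_1\nu_k,\pi_1\nu_l}\) over crossing pairs, which is exactly \(-(\beta_1,\beta_2)\). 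The Clifford generators have degree \(0\), so they do not affect the count.

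Next I will check injectivity and surjectivity. Both sides are identified as graded super vector spaces with \(\bigoplus_{u\in\mathfrak{S}_{n_1,n_2}}\sigma_u (M_1\otimes M_2)\). The induced map is triangular with respect to the Bruhat order on the cosets, with diagonal entries given by the isomorphisms \(m_1\otimes m_2\mapsto \pm m_2\otimes m_1\). It is therefore bijective on each graded piece, and since the pieces are finite-dimensional in \(\sgMod_{\mathsf{L}}\), this gives a graded isomorphism. The graded dual is not needed here. The \(\sgMod_{\mathsf{L}}\) hypothesis is only used to ensure that \(\HOM\) in \(\Coind\) agrees with the full Hom componentwise, so that the dimension comparison is legitimate.

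The main obstacle is the bimodule compatibility of the top projection in the presence of the new terms \(\pm\c_a\c_{a+1}e(\nu)\) in (RC6)--(RC7), together with the sign bookkeeping from odd \(\c_p\) passing through the Koszul braiding. For the first issue, I would write \(x_p\sigma_w - \sigma_w x_{w^{-1}(p)}\) as a sum of terms \(\sigma_{w'}\cdot(\text{Clifford polynomial})\) with \(\ell(w')<\ell(w)\), and argue that these lie in lower double cosets, so they vanish under the projection. For the signs, I would reduce the check to generators, where the twisted antiautomorphism \(\psi\) makes them explicit.
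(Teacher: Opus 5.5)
Your proposal is correct and follows essentially the paper's route. The paper's map $u_1\otimes u_2\mapsto F_{u_1\otimes u_2}$ is supported on the $\sigma_{w[n_1,n_2]}$-coset with value $(-1)^{\Par(u_1)\Par(u_2)}u_2\otimes u_1$; up to Koszul signs it corresponds, under the two Frobenius reciprocities, to your projection onto the top Mackey piece, and like you the paper reads off the degree from $\sigma_{w[n_1,n_2]}$ and concludes by dimension count plus injectivity.

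Your length-triangularity step is actually a needed completion of the paper's one-line injectivity check, which as written only covers elements of $M_1\otimes M_2$. That step is: for $y=\sum_u\sigma_u m_u$, take $u$ of maximal length with $m_u\neq 0$ and evaluate at $\sigma_v$ with $w[n_1,n_2]=vu$ reduced. Two notational slips should be fixed. First, the identity $\sigma_w e(\beta_1,\beta_2)=e(\beta_2,\beta_1)\sigma_w$ and the top piece need $w^{-1}=w[n_1,n_2]$ in place of $w=w[n_2,n_1]$. Second, $f_{m_1,m_2}(a)$ should be $T_{M_1,M_2}$ applied to the $\sigma_{w[n_1,n_2]}$-component of $a(m_1\otimes m_2)$, not the $\sigma_e$-component of $a\sigma_{w^{-1}}(m_1\otimes m_2)$.
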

\begin{proof}
   In view of Proposition \ref{prop:hom-tensor duality for quiver Hecke--Clifford superalgebra}, we first construct a morphism of \(RC_{\beta_1} \otimes RC_{\beta_2}\)-modules
   \[F \in \HOM_{RC_{\beta_1} \otimes RC_{\beta_2}} (M_1 \otimes M_2 , \Coind_{\beta_2,\beta_1}^{\beta_2 + \beta_1} M_2 \otimes M_1)\]
   compatible with the decomposition
   \[e(\beta_2,\beta_1)RC_{\beta_2 + \beta_1} = \bigoplus_{w \in \mathfrak{S}_{n_2} \times \mathfrak{S}_{n_1} \backslash \mathfrak{S}_{n_2 + n_1}}RC_{\beta_2,\beta_1}\sigma_{w}.\]
   The map \(F\) is given by
   \[\begin{array}{ccc}
      M_1 \otimes M_2 & \stackrel{F}{\longrightarrow} & \HOM_{RC_{\beta_2} \otimes RC_{\beta_1}}(e(\beta_2,\beta_1)RC_{\beta_1 + \beta_2}, M_2 \otimes M_1) \\
      \rotatebox{90}{$\in$} & & \rotatebox{90}{$\in$} \\
      u_1 \otimes u_2 & \longmapsto & F_{u_1 \otimes u_2} \colon \sigma_{w} \longmapsto \delta_{w,w[n_1,n_2]} (-1)^{\Par(u_1)\Par(u_2)}u_2 \otimes u_1
   \end{array}\]
   For a general element \(h \sigma_{w} \in e(\beta_1, \beta_2)RC_{\beta_1 + \beta_2}\),
   \[F_{u_1 \otimes u_2}(h \sigma_{w}) = (-1)^{\Par(h)(\Par(u_1) + \Par(u_2))}h F_{u_1 \otimes u_2}(\sigma_{w}).\]
   A direct computation yields
   \begin{align*}
      \deg(F) &= \deg(F_{u_1 \otimes u_2}) - \deg(u_1 \otimes u_2) \\
      &= (\deg(u_1 \otimes u_2) - \deg(\sigma_{w[n_1,n_2]})) - \deg(u_1 \otimes u_2) \\
      &= - \deg(\sigma_{w[n_1,n_2]}) \\
      &= -(\beta_1,\beta_2).
   \end{align*}
   By Frobenius reciprocity (Proposition \ref{prop:hom-tensor duality for quiver Hecke--Clifford superalgebra}), \(F\) yields a morphism of graded \(RC_{\beta_1 + \beta_2}\)-modules
   \[\mathcal{F} \colon \Ind_{\beta_1,\beta_2}^{\beta_1 + \beta_2} M_1 \otimes M_2 \to q^{-(\beta_1,\beta_2)}\Coind_{\beta_2,\beta_1}^{\beta_2 + \beta_1} M_2 \otimes M_1.\]
   Since each \(\mathbb{Z}\)-homogeneous component of \(M_1\) and \(M_2\) is finite-dimensional, the corresponding components on both sides are also finite-dimensional and of equal dimension.
   By construction, \(\mathcal{F}\) is injective. Indeed, if \(\mathcal{F}(u) = 0\), then
   \[\mathcal{F}(u)(\sigma_{w[n_1,n_2]}) = u = 0.\]
   Therefore \(\mathcal{F}\) is an isomorphism.
\end{proof}

\begin{cor}\label{cor:dual and convolution product}
   For \(M_i \in RC_{\beta_i} \sgMod_{\mathsf{L}}~(i=1,2)\), there is an isomorphism
   \[(M_1 \circ M_2)^{*} \simeq q^{(\beta_1,\beta_2)}M_{2}^{*} \circ M_{1}^{*}.\]
\end{cor}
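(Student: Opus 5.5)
The plan is to deduce the corollary from Lemma \ref{lem:ind and coind are isom} together with a general duality between induction and coinduction. The key intermediate claim is a natural isomorphism
\[(\Ind_{\beta_1,\beta_2}^{\beta_1+\beta_2} M_1 \otimes M_2)^{*} \simeq \Coind_{\beta_1,\beta_2}^{\beta_1+\beta_2}(M_1^{*} \otimes M_2^{*}),\]
valid for \(M_i \in RC_{\beta_i}\sgMod_{\mathsf{L}}\). Granting it, I apply Lemma \ref{lem:ind and coind are isom} to \(M_2^{*}, M_1^{*}\), with the roles of \(\beta_1, \beta_2\) interchanged. This gives \(M_2^{*}\circ M_1^{*} \simeq q^{-(\beta_1,\beta_2)}\Coind_{\beta_1,\beta_2}^{\beta_1+\beta_2} M_1^{*}\otimes M_2^{*}\), using that \((\beta_1,\beta_2)\) is symmetric. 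Combining the two isomorphisms yields \((M_1\circ M_2)^{*}\simeq q^{(\beta_1,\beta_2)}M_2^{*}\circ M_1^{*}\).

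One caveat is that Lemma \ref{lem:ind and coind are isom} is stated for modules in \(\sgMod_{\mathsf{L}}\), whereas \(M_i^{*}\) lies in \(\sgMod_{\mathsf{L}'}\). The proof of the lemma only uses that each homogeneous component is finite-dimensional. That property still holds here, since \(RC_\beta\) has finite-dimensional pieces in each degree, is bounded below, and \(\mathfrak{S}_{n_1,n_2}\) is finite. So I will note that the same argument applies verbatim in \(\sgMod_{\mathsf{L}'}\).

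To prove the intermediate claim, I define a pairing
\[\langle f, \sigma_w \otimes (u_1\otimes u_2)\rangle\]
between \(\Coind(M_1^{*}\otimes M_2^{*}) = \HOM_{RC_{\beta_1,\beta_2}}(e(\beta_1,\beta_2)RC_{\beta_1+\beta_2}, M_1^{*}\otimes M_2^{*})\) and \(\Ind(M_1\otimes M_2)\). I evaluate \(f\) at \(\psi(\sigma_w)\), using the antiautomorphism \(\psi\), and then pair the result with \(u_1\otimes u_2\) through the Koszul-signed identification \((M_1\otimes M_2)^{*}\simeq M_1^{*}\otimes M_2^{*}\).

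Using the decomposition \(RC_{\beta_1+\beta_2}=\bigoplus_{w\in\mathfrak{S}_{n_1,n_2}}\sigma_w RC_{\beta_1,\beta_2}\) from Remark \ref{rem:convolution product of finite dimensional module is finite dimensional}, a coinduced element is determined by its values on the finitely many \(\sigma_w\), and both sides split over \(w\) into copies of \(M_1^{(*)}\otimes M_2^{(*)}\). The pairing is therefore nondegenerate degreewise, and each graded piece is finite-dimensional. Alternatively, one can argue via Frobenius reciprocity (Proposition \ref{prop:hom-tensor duality for quiver Hecke--Clifford superalgebra}): the restriction of \((\Ind)^{*}\) maps to \(M_1^{*}\otimes M_2^{*}\) by dualizing the canonical inclusion \(M_1\otimes M_2\hookrightarrow \Res\Ind\). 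This gives a morphism \((\Ind)^{*}\to\Coind\), which I then show is bijective by comparing graded dimensions.

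The main obstacle is verifying that this map is \(RC_{\beta_1+\beta_2}\)-linear, with the correct degree and sign conventions. The difficulty is that the left action on the dual is twisted by \(\psi\), which sends \(\c_i\mapsto -\sqrt{-1}\,\c_i\) and involves the Koszul signs. I would handle this by checking compatibility on generators, where the \(\psi\)-twist on \(RC_{\beta_1}\otimes RC_{\beta_2}\) must agree with the tensor product of the \(\psi\)-twists on each factor. This amounts to a sign check on \(\c_i\) against the tensor sign convention, and I would use the \(\Pi\)/\(\Pi'\) and \(\dagger\) conventions of Section \ref{sec:preliminaries} to absorb any stray signs. Grading is straightforward because \(\deg\psi(a)=\deg a\), so no extra shift appears in the intermediate claim, and the shift \(q^{(\beta_1,\beta_2)}\) comes entirely from Lemma \ref{lem:ind and coind are isom}.
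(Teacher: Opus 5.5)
Your proposal is correct and is essentially the paper's own argument. The paper's Yoneda chain (Frobenius reciprocity for \(\Ind\) and \(\Coind\), together with \(\HOM(A,B)\simeq\HOM(B^*,A^*)\)) is a route to exactly your intermediate isomorphism \((M_1\circ M_2)^*\simeq\Coind^{\beta_1+\beta_2}_{\beta_1,\beta_2}(M_1^*\otimes M_2^*)\), after which it applies Lemma~\ref{lem:ind and coind are isom} to \(M_2^*,M_1^*\) just as you do. Proving that isomorphism directly is, if anything, cleaner than the printed chain, whose first step \((M_1\circ M_2)^*\simeq M_1^*\circ M_2^*\) cannot hold in general: combined with the conclusion, it would force \(M_1^*\circ M_2^*\simeq q^{(\beta_1,\beta_2)}M_2^*\circ M_1^*\) for all modules. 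Your remark that the Lemma is being applied to modules in \(RC_{\beta}\sgMod_{\mathsf{L}'}\), where the same graded dimension count still works, makes explicit a point the paper passes over silently.
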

\begin{proof}
   For \(N \in RC_{\beta_1 + \beta_2} \sgMod_{\mathsf{L}'}\), we have the following chain of natural isomorphisms of graded modules:
   \begin{align*}
      \HOM_{RC_{\beta_1 + \beta_2}}((M_1 \circ M_2)^*, N) &\simeq \HOM_{RC_{\beta_1 + \beta_2}}(M_1^* \circ M_2^*, N) \\
      &\simeq \HOM_{RC_{\beta_1} \otimes RC_{\beta_2}}(M_1^* \otimes M_2^*, \Res_{\beta_1,\beta_2}^{\beta_1 + \beta_2} N) \\
      &\simeq \HOM_{RC_{\beta_1} \otimes RC_{\beta_2}}(\Res_{\beta_1,\beta_2}^{\beta_1 + \beta_2} N^*,(M_1^* \otimes M_2^*)^*) \\
      &\simeq \HOM_{RC_{\beta_1 + \beta_2}}(N^*, \Coind^{\beta_1 + \beta_2}_{\beta_1,\beta_2} (M_1^* \otimes M_2^*)^*) \\
      &\simeq \HOM_{RC_{\beta_1 + \beta_2}}(\Coind^{\beta_1 + \beta_2}_{\beta_1,\beta_2} M_1^* \otimes M_2^*,N) \\
      &\simeq \HOM_{RC_{\beta_1 + \beta_2}}(q^{(\beta_1,\beta_2)}\Ind^{\beta_2 + \beta_1}_{\beta_2,\beta_1} M_2^* \otimes M_1^*,N).
   \end{align*}
   Restricting to degree-zero morphisms yields
   \[\Hom_{RC_{\beta_1 + \beta_2}}((M_1 \circ M_2)^*, N) \simeq \Hom_{RC_{\beta_1 + \beta_2}}(q^{(\beta_1,\beta_2)}\Ind^{\beta_2 + \beta_1}_{\beta_2,\beta_1} M_2^* \otimes M_1^*,N).\]
   The claim \((M_1 \circ M_2)^* \simeq q^{(\beta_1,\beta_2)}M_2^* \circ M_1^*\) then follows from Yoneda's lemma.
\end{proof}

\begin{rem}
   These results play an important role in the study of the structure of tensor products of irreducible representations over symmetric quiver Hecke(--Clifford super)algebras, as will be discussed in Section \ref{subsec:symmetric quiver Hecke--Clifford superalgebras}.
\end{rem}

\subsection{Intertwiners and R-matrices}\label{subsec:intertwiners}

We recall the construction of the ``polynomial module'' for quiver Hecke--Clifford superalgebras and the R-matrix defined in terms of it.
The polynomial module plays an essential role also in the construction of Section \ref{sec:schur--weyl duality via quiver hecke--clifford superalgebra} (cf. \cite{KKK}).

Suppose \(|\beta| = n\).
Let \(\mathcal{A}_{\beta}\) denote the sub-superalgebra of \(RC_{\beta}\) generated by
\[\mathcal{A}_{\beta} = \langle e(\nu), x_i, \c_i \mid \nu \in J^{\beta},  1 \leq i \leq n \rangle_{\Bbbk \text{-subalg}},\]
and let \(\mathcal{K}_{\beta}\) be its localization
\[\mathcal{K}_{\beta} = \mathcal{A}_{\beta}[(x_a \pm x_b)^{-1} \mid 1 \leq a < b \leq n].\]
For \(1 \leq a, b \leq n\) with \(a \neq b\), the central element \(R_{a,b}\) of \(\mathcal{K}_{n}\) is defined by
\begin{align*}
   R_{a,b} = \sum_{\pi_{1}(\nu_a) \neq \pi_{1}(\nu_b)} \tilde{Q}_{\nu_a,\nu_b}(x_a,x_b)e(\nu) - \sum_{\nu_a = \nu_b} \frac{1}{(x_a - x_b)^2}e(\nu) - \sum_{\nu_a = c\nu_b} \frac{1}{(x_a + x_b)^2}e(\nu).
\end{align*}
Let \(\mathcal{KS}_{\beta}\) be the superalgebra generated by \(\mathcal{K}_{\beta}\) and the even elements \(\tilde{s}_1, \ldots, \tilde{s}_{n-1}\), subject to:
\begin{enumerate}[(i)]
   \item the relations of \(\mathcal{K}_{\beta}\),
   \item the braid relations for \(\tilde{s}_k\),
   \item \(\tilde{s}^2_k = R_{k,k+1}~(1 \leq k \leq n - 1),\)
   \item \(\tilde{s}_{k}a = (s_{k}.a)\tilde{s}_k~(a \in \mathcal{K}_{\beta}).\)
\end{enumerate}
Here, \(s_k = (k,k + 1) \in \mathfrak{S}_n\) is the adjacent transposition, and \(s_k.a\) denotes the action of \(\mathfrak{S}_n\) on \(a \in \mathcal{K}_{\beta}\) by permutation.
The superalgebra \(\mathcal{KS}_{\beta}\) admits an antiautomorphism \(\psi \colon \mathcal{KS}_{\beta} \to \mathcal{KS}_{\beta}^{\mathsf{op}}\) given by
   \[\begin{array}{ccc}
      \mathcal{KS}_{\beta} & \stackrel{\sim}{\longrightarrow} & \mathcal{KS}_{\beta}^{\mathsf{op}} \\
      \rotatebox{90}{$\in$} & & \rotatebox{90}{$\in$} \\
      e(\nu) & \longmapsto & e(\nu) \\
      x_i & \longmapsto & x_i \\
      \tilde{s}_k & \longmapsto & \tilde{s}_k \\
      \c_i & \longmapsto & -\sqrt{-1} \c_i
   \end{array}\]
   where \(\mathcal{KS}_{\beta}^{\mathsf{op}}\) denotes the superalgebra on the same generators as \(\mathcal{KS}_{\beta}\), with the relation \(\c_i \c_j + \c_j \c_i = 2\delta_{ij}\) replaced by
   \[\c_p \c_q + \c_q \c_p = -2\delta_{p,q}.\]

\begin{thm}[{\cite[Theorem 3.8.]{KKT}}]
Let \(|\beta| = n\).
Consider the following assignment on the generators of \(RC_{\beta}\):
   \[
   \begin{array}{rccc}
      \iota \colon & RC_{\beta} & \longrightarrow & \mathcal{KS}_{\beta} \\
      & e(\nu) & \longmapsto & e(\nu) \\
      & x_{i} & \longmapsto & x_{i} \\
      & \c_{i} & \longmapsto & \c_{i} \\
      & \sigma_{a}e(\nu) & \longmapsto & \iota(\sigma_{a}e(\nu))
   \end{array}\]
   where
   \[\iota(\sigma_{a}e(\nu)) = \begin{cases}
      \tilde{s}_{a}e(\nu) & \text{if}~\pi_{1}(\nu_{a}) \neq \pi_{1}(\nu_{a+1}), \\
      (\tilde{s}_{a} - (x_{a} - x_{a+1})^{-1})e(\nu) & \text{if}~\nu_{a} = \nu_{a+1} \notin J^c, \\
      (\tilde{s}_{a} + (x_{a} + x_{a+1})^{-1}\c_{a}\c_{a+1})e(\nu) & \text{if}~\nu_{a} = c(\nu_{a+1}) \notin J^c, \\
      (\tilde{s}_{a} - (x_{a} - x_{a+1})^{-1} + (x_{a} + x_{a+1})^{-1}\c_{a}\c_{a+1})e(\nu) & \text{if}~\nu_{a} = \nu_{a+1} \in J^c.
   \end{cases}\]
   This assignment uniquely extends to an injective homomorphism of superalgebras \(\iota \colon RC_{\beta} \to \mathcal{KS}_{\beta}\).
\end{thm}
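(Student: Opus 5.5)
The proof has two parts: first check that \(\iota\) respects all defining relations (RC1)--(RC10), then show injectivity using the PBW basis of Remark \ref{rem:PBW for quiver Hecke--Clifford}.

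\textbf{Well-definedness.} Relations (RC1)--(RC3) involve only \(e(\nu)\), \(x_i\), \(\c_i\). They hold verbatim in \(\mathcal{K}_{\beta}\subset\mathcal{KS}_{\beta}\), since \(\mathcal{A}_{\beta}\) is generated by these elements subject to the same relations. For the relations involving \(\sigma_a\), write
\[\iota(\sigma_a)e(\nu)=(\tilde{s}_a+f_a(\nu))e(\nu),\]
where the correction \(f_a(\nu)\) is one of \(0\), \(-(x_a-x_{a+1})^{-1}\), \((x_a+x_{a+1})^{-1}\c_a\c_{a+1}\), or their sum. Relations (RC4) and (RC5) follow from \(\tilde{s}_a b=(s_a.b)\tilde{s}_a\) together with the fact that \(f_a(\nu)\) commutes with \(x_p\) and \(\c_p\) for \(p\neq a,a+1\). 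For \(\c_a\) the check is that \(\c_{a+1}\) times the correction \(f_a(\nu)\) agrees with the correction \(f_a(c_a\nu)\) times \(\c_a\); this is where the choice \(\nu_a=c(\nu_{a+1})\) is used.

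Relations (RC6) and (RC7) are one-line computations. For instance,
\[(\tilde{s}_a-(x_a-x_{a+1})^{-1})x_{a+1}-x_a(\tilde{s}_a-(x_a-x_{a+1})^{-1})=1,\]
and the \(\c_a\c_{a+1}\) term produces \(\mp\c_a\c_{a+1}\) because \(\c_a\c_{a+1}\) anticommutes with \(x_a\) and with \(x_{a+1}\).

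Relation (RC8) is the reason \(R_{a,a+1}\) is defined as it is. Expanding \((\tilde{s}_a+f)^2=\tilde{s}_a^2+\tilde{s}_af+f\tilde{s}_a+f^2\), the cross terms cancel, since \(s_a.f=-f\) in each case; note that \(s_a(\c_a\c_{a+1})=-\c_a\c_{a+1}\). What remains is \(R_{a,a+1}+f^2\), and \(f^2\) is exactly the negative of the correction terms in \(R_{a,a+1}\). In the \(J^c\) case the mixed term vanishes because \((x_a-x_{a+1})^{-1}\) anticommutes with \(\c_a\c_{a+1}\)-conjugation appropriately, giving \(\tilde{Q}=0\). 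Relation (RC9) is immediate from commutativity of distant \(\tilde{s}\)'s and of the corresponding corrections.

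\textbf{Main obstacle: the braid relation (RC10).} I would expand both sides of \((\tilde{s}_{a+1}+f_{a+1})(\tilde{s}_a+f_a)(\tilde{s}_{a+1}+f_{a+1})\) and its counterpart, with each \(f\) evaluated on the appropriately permuted idempotent. I would then move every \(\tilde{s}\) to the right and compare coefficients of \(\tilde{s}_w\) for \(w\in\mathfrak{S}_3\). The top terms cancel by the braid relation of the \(\tilde{s}_k\). The terms in \(\tilde{s}_a\tilde{s}_{a+1}\) and \(\tilde{s}_{a+1}\tilde{s}_a\) cancel by a twisted-commutativity identity for the \(f\)'s. The remaining terms in \(\tilde{s}_a\), \(\tilde{s}_{a+1}\) and the constant terms must match the divided differences on the right-hand side of (RC10).

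The case analysis runs over the relations among \(\nu_a,\nu_{a+1},\nu_{a+2}\): equal, \(c\)-related, or in \(J^c\). I would organise it as in the proof for ordinary quiver Hecke algebras (Khovanov--Lauda / Rouquier), reducing to rational identities in \(x_a,x_{a+1},x_{a+2}\). The Clifford terms I would treat by the substitution \(x\mapsto -x\) effected by conjugation with \(\c\). This step is the most laborious, and I expect it to be where sign errors arise.

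\textbf{Injectivity.} Note that \(\mathcal{KS}_{\beta}\) is a free left \(\mathcal{K}_{\beta}\)-module with basis \(\{\tilde{s}_w\}\). This follows by realising it as the twisted group algebra \(\mathcal{K}_{\beta}\rtimes\mathfrak{S}_n\), modified by the invertible scalars \(R_{k,k+1}\), or faithfully on a polynomial-type module. The image of a PBW basis element \(x^{a}\c^{\eta}\sigma_w e(\nu)\) is \(x^a\c^\eta\tilde{s}_w e(\nu)\) plus \(\mathcal{K}_{\beta}\)-combinations of \(\tilde{s}_{w'}\) with \(\ell(w')<\ell(w)\). An induction on length, using that the monomials \(x^a\c^\eta\) are \(\mathcal{K}_{\beta}\)-linearly independent, i.e.\ that \(\mathcal{A}_{\beta}\hookrightarrow\mathcal{K}_{\beta}\) is injective because the \(x_a\pm x_b\) are non-zero-divisors, then shows that the images of the basis are linearly independent. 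Hence \(\iota\) is injective.
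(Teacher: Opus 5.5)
The paper gives no proof of this statement; it quotes \cite[Theorem 3.8]{KKT}. Your outline is the standard route: verify the relations inside $\mathcal{KS}_\beta$, then compare leading terms against the $\tilde{s}_w$. The checks you actually carry out for (RC4)--(RC8) are correct.

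The genuine gap is the claim that $\{\tilde{s}_w\}_{w\in\mathfrak{S}_n}$ is a basis of $\mathcal{KS}_\beta$ as a left $\mathcal{K}_\beta$-module. Your whole injectivity argument rests on it, and the justification you give is false in general. Only the $x_a\pm x_b$ are inverted in $\mathcal{K}_\beta$. So for $\pi_1(\nu_k)=i\neq j=\pi_1(\nu_{k+1})$ with $a_{ij}<0$, the element $R_{k,k+1}e(\nu)=\tilde{\mathcal{Q}}_{\nu_k,\nu_{k+1}}(x_k,x_{k+1})e(\nu)$ is typically not a unit. For instance, $a_{ij}=-1$, $a_{ji}=-2$ forces $\mathcal{Q}_{ij}(u,v)=tu+t'v^2$. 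Hence $\tilde{s}_k$ is not invertible, and $\mathcal{KS}_\beta$ is not a twisted group algebra over $\mathcal{K}_\beta$. Your alternative, ``a polynomial-type module'', is never specified, and it is exactly the nontrivial point.

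Spanning by the $\tilde{s}_w$ is immediate from the relations, but independence must be proved. One way is to let $\mathcal{KS}_\beta$ act on the free module $\bigoplus_w\mathcal{K}_\beta T_w$:
\begin{itemize}
\item $\mathcal{K}_\beta$ acts by left multiplication;
\item $\tilde{s}_k(aT_w)=s_k(a)T_{s_kw}$ if $\ell(s_kw)>\ell(w)$;
\item $\tilde{s}_k(aT_w)=s_k(a)R_{k,k+1}T_{s_kw}$ otherwise.
\end{itemize}
The defining relations hold on this module because $R_{k,k+1}$ is central in $\mathcal{K}_\beta$, $R_{a,b}=R_{b,a}$, and $w(R_{a,b})=R_{w(a),w(b)}$. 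The braid check reduces to cosets of $\langle s_k,s_{k+1}\rangle$. Then $\tilde{s}_wT_{\mathrm{id}}=T_w$ gives independence. Bergman's diamond lemma would also work.

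Two smaller points.

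First, Remark~\ref{rem:PBW for quiver Hecke--Clifford} is \cite[Corollary 3.9]{KKT}, which is obtained there as a consequence of this very theorem. You may therefore use only its elementary half: the monomials $x^{a}\mathsf{c}^{\eta}\sigma_we(\nu)$ span $RC_\beta$, by straightening with the relations. Likewise, take $\mathcal{A}_\beta$ to be the algebra presented by (RC1)--(RC3), whose monomial basis is evident, rather than a subalgebra of $RC_\beta$. Your leading-term argument then yields injectivity and the basis theorem together.

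Second, (RC10) is where well-definedness has real content, and you have only described the computation. When you do it, check against \cite[Definition 3.5]{KKT}, not the display in the paper.
\begin{itemize}
\item For $\nu_a=c(\nu_{a+2})\notin J^c$, the left side lies in $e(c_ac_{a+2}(\nu))RC_\beta e(\nu)$. The computation yields
\[
\frac{\tilde{\mathcal{Q}}_{\nu_a,\nu_{a+1}}(x_{a+2},x_{a+1})-\tilde{\mathcal{Q}}_{\nu_a,\nu_{a+1}}(-x_a,x_{a+1})}{x_{a+2}+x_a}\,\mathsf{c}_a\mathsf{c}_{a+2}e(\nu),
\]
with a Clifford factor that the printed relation omits.
\item When $\nu_a=\nu_{a+1}\in J^c$ (resp.\ $\nu_a=\nu_{a+2}\in J^c$), the two listed right-hand sides of (RC6), (RC7) (resp.\ (RC10)) must be added. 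This is exactly parallel to your correction $f_a(\nu)$ being a sum of two terms.
\end{itemize}
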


Using this, we define the intertwiner as an element of \(RC_{\beta}\) that maps to a polynomial multiple of \(\tilde{s}_a\) under \(\iota\).

\begin{dfn}\label{dfn:intertwiner}
   Let \(|\beta| = n\).
   For \(1 \leq a \leq n - 1\), the \textit{intertwiner} \(\varphi_{a} \in RC_{\beta}\) is defined by
   \[\varphi_{a}e(\nu) \coloneqq \begin{cases}
      \sigma_a e(\nu) & \text{if}~\pi_{1}(\nu_a) \neq \pi_{1}(\nu_{a+1}), \\
      (-1)^{\pi_2(\nu_{a})}(\sigma_{a}x_{a} - x_{a}\sigma_{a})e(\nu) & \text{if}~\nu_a = \nu_{a+1} \notin J^c, \\
      (-1)^{\pi_2(\nu_{a})}(\sigma_{a}x_{a} + x_{a}\sigma_{a})e(\nu) & \text{if}~\nu_a = c(\nu_{a+1}) \notin J^c, \\
      (\sigma_{a}x_{a}^{2}- x_{a}^{2}\sigma_{a})e(\nu) & \text{if}~\nu_a = \nu_{a+1} \in J^c.
   \end{cases}\]
\end{dfn}

By construction, \(\iota(\varphi_{a}e(\nu))\) takes the form
   \[\iota(\varphi_{a}e(\nu)) \coloneqq \begin{cases}
      \tilde{s}_{a} e(\nu) & \text{if}~\pi_{1}(\nu_a) \neq \pi_{1}(\nu_{a+1}), \\
      (-1)^{\pi_2(\nu_a)}(x_{a} - x_{a+1})\tilde{s}_{a}e(\nu),  & \text{if}~\nu_a = \nu_{a+1} \notin J^c, \\
      (-1)^{\pi_2(\nu_{a})}(x_{a} + x_{a+1})\tilde{s}_{a}e(\nu) & \text{if}~\nu_a = c(\nu_{a+1}) \notin J^c, \\
      (x_{a+1}^{2} - x_{a}^2)\tilde{s}_{a}e(\nu) & \text{if}~\nu_a = \nu_{a+1} \in J^c.
   \end{cases}\]
   Observe that \(\varphi_{a}e(\nu)\) is even for all \(1 \leq a \leq n\) and \(\nu \in J^{\beta}\).
   This construction extends that of the intertwiner in \cite{KKK}.
   The following is an analogue of \cite[Lemma 1.5]{KKK}.

\begin{lem}\label{lem:property of intertwiner}
   For \(1 \leq a \leq n-1\), let \(\varphi_{a} \in RC_{\beta}\) be defined as above.
   The following hold:
   \begin{enumerate}[(i)]
      \item \(\varphi_a^2 e(\nu) = \begin{cases}
         \tilde{Q}_{\nu_a,\nu_{a+1}}(x_a,x_{a+1})e(\nu) & \text{if } \pi_1(\nu_a) \neq \pi_1(\nu_{a+1}), \\
         e(\nu) & \text{if } \pi_1(\nu_a) = \pi_1(\nu_{a+1}) \in I_{\mathsf{even}}, \\
         2(x_a^2 + x_{a+1}^2)e(\nu) & \text{if } \pi_1(\nu_a) = \pi_1(\nu_{a+1}) \in I_{\mathsf{odd}}.
      \end{cases}\)
      \item The family \(\{\varphi_a\}_{1 \leq a < n}\) satisfies the braid relations.
      \item For each \(w \in \mathfrak{S}_n\), choose a reduced expression \(w = s_{a_1} \cdots s_{a_l}\) and set \(\varphi_w = \varphi_{a_1} \cdots \varphi_{a_l}\). Then \(\varphi_w\) is independent of the choice of reduced expression.
      \item For \(w \in \mathfrak{S}_n\) and \(1 \leq k \leq n\), we have \(\varphi_{w}x_k = x_{w(k)}\varphi_{w}\) and \(\varphi_{w}\c_k = \c_{w(k)}\varphi_{w}\).
      \item If \(w \in \mathfrak{S}_n\) and \(1 \leq k \leq n\) satisfy \(w(k+1) = w(k) + 1\), then \(\varphi_{w}\sigma_k = \sigma_{w(k)}\varphi_{w}\).
      \item For \(w \in \mathfrak{S}_n\),
      \begin{align*}
         \varphi_{w^{-1}}\varphi_{w}e(\nu) = \prod_{\substack{a<b, \\ w(a) > w(b)}} &(\tilde{Q}_{\nu_a,\nu_b}(x_a,x_b) + \delta(\pi_1(\nu_a) = \pi_1(\nu_{b}) \in I_{\mathsf{even}}) \\
         &+ 2(x_a^2 + x_b^2)\delta(\pi_1(\nu_a) = \pi_1(\nu_{b}) \in I_{\mathsf{odd}}))e(\nu).
      \end{align*}
   \end{enumerate}
\end{lem}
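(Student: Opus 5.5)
The plan is to transport everything through the faithful embedding \(\iota \colon RC_{\beta} \to \mathcal{KS}_{\beta}\) of \cite[Theorem 3.8]{KKT}. Since \(\iota\) is injective, any identity among elements of \(RC_{\beta}\) can be checked after applying \(\iota\). Inside \(\mathcal{KS}_{\beta}\), each \(\iota(\varphi_a e(\nu))\) has the form \(g_{a,\nu}\tilde{s}_a e(\nu)\), where \(g_{a,\nu} \in \mathcal{A}_{\beta}\) is one of \(1\), \(\pm(x_a - x_{a+1})\), \(\pm(x_a + x_{a+1})\), \(x_{a+1}^2 - x_a^2\). Each of these is an even element of the (super)commutative subalgebra generated by the \(x_i\), so it commutes with all \(x_j\) and \(e(\mu)\).

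For (i), I would compute \(\iota(\varphi_a)^2 e(\nu) = g_{a,s_a\nu}\,(s_a.g_{a,\nu})\,\tilde{s}_a^2 e(\nu) = g_{a,s_a\nu}\,(s_a.g_{a,\nu})\,R_{a,a+1}e(\nu)\), case by case.
\begin{itemize}
\item If \(\pi_1(\nu_a) \neq \pi_1(\nu_{a+1})\), both factors are \(1\) and \(R_{a,a+1}e(\nu) = \tilde{Q}_{\nu_a,\nu_{a+1}}e(\nu)\).
\item If \(\nu_a = \nu_{a+1} \notin J^c\), the prefactor is \(-(x_a - x_{a+1})^2\) and \(R = -(x_a-x_{a+1})^{-2}\), giving \(1\). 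The case \(\nu_a = c(\nu_{a+1})\) is analogous; the sign \((-1)^{\pi_2}\) flips between \(\nu\) and \(s_a\nu\), which supplies the needed minus.
\item If \(\nu_a = \nu_{a+1} \in J^c\), the prefactor is \(-(x_a^2 - x_{a+1}^2)^2\) and \(R = -(x_a-x_{a+1})^{-2} - (x_a+x_{a+1})^{-2}\). The product is \((x_a+x_{a+1})^2 + (x_a-x_{a+1})^2 = 2(x_a^2 + x_{a+1}^2)\).
\end{itemize}

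For (ii) and (iii), I would push all polynomial prefactors to the left using \(\tilde{s}_k a = (s_k.a)\tilde{s}_k\). Both sides of a braid relation then become \(G\,\tilde{s}_a\tilde{s}_{a+1}\tilde{s}_a\) and \(G'\,\tilde{s}_{a+1}\tilde{s}_a\tilde{s}_{a+1}\). The prefactors \(G, G'\) are products of the factors \(g\) attached to the three pairs of positions swapped, evaluated at the appropriate idempotent; this is the same multiset in both orders, so \(G = G'\). The \(\tilde{s}\) parts agree by the braid relations in \(\mathcal{KS}_{\beta}\). Commutation for \(|a-b|>1\) is immediate. Part (iii) then follows from (ii) by Matsumoto's theorem.

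For (iv), the relation \(\tilde{s}_k a = (s_k.a)\tilde{s}_k\) applied to \(a = x_k, \c_k\), with the even prefactors commuting with \(x\) and \(\c\) up to idempotent bookkeeping, gives \(\varphi_a x_k = x_{s_a(k)}\varphi_a\) and \(\varphi_a \c_k = \c_{s_a(k)}\varphi_a\). Induction on length gives the statement for \(\varphi_w\). One must check that \(\c_k\) commutes with \(g_{a,\nu}\); this holds because the \(g\)'s are polynomials in \(x_a, x_{a+1}\) of the specified even or odd form that are matched under \(\c_k\) changing \(\nu\) to \(c_k\nu\), up to sign conventions compensated by the \((-1)^{\pi_2}\) factors.

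For (v), when \(w(k+1) = w(k)+1\) I would write \(ws_k = s_{w(k)}w\) with \(\ell(ws_k) = \ell(w)+1\). Then \(\varphi_w\varphi_k = \varphi_{w(k)}\varphi_w\) by (iii). Writing \(\sigma_k e(\nu)\) in \(\mathcal{KS}_{\beta}\) as \(\tilde{s}_k\) plus correction terms in \(\mathcal{K}_{\beta}\), which are rational in \(x_k, x_{k+1}, \c_k\c_{k+1}\), the correction terms are carried by (iv) to the same expressions in \(x_{w(k)}, x_{w(k)+1}\). Carrying this through is the main obstacle: one must verify the \(\tilde{s}\) parts match, namely that the prefactors of \(\varphi_w\tilde{s}_k\) and \(\tilde{s}_{w(k)}\varphi_w\) agree. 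I would argue this by induction on \(\ell(w)\), reducing to \(w = s_a\) with \(s_a(k+1) = s_a(k)+1\), where it is a direct braid-type check.

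For (vi), I would induct on \(\ell(w)\). Writing \(w = s_a w'\) with \(\ell(w) = \ell(w')+1\), we have \(\varphi_{w^{-1}}\varphi_w = \varphi_{w'^{-1}}\varphi_a^2\varphi_{w'}\). Apply (i) to \(\varphi_a^2\), whose value is a polynomial in \(x_a, x_{a+1}\) at the idempotent \(e(w'\nu)\). Then move it past \(\varphi_{w'}\) using (iv), which converts it into the factor indexed by the new inversion pair \((w'^{-1}(a), w'^{-1}(a+1))\) with \(\nu\) labels at those positions. Finally apply induction to \(\varphi_{w'^{-1}}\varphi_{w'}\).
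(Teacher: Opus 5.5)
Your strategy is the paper's: transport everything through the injective map \(\iota\colon RC_{\beta}\to\mathcal{KS}_{\beta}\) and follow \cite[Lemma 1.5]{KKK}. Your treatment of (i), (ii), (iii), (vi) and of the \(x_k\)-part of (iv) is correct as sketched. In (ii), the prefactor of each crossing depends only on the pair of strands and their left/right order, and that order is the same in both reduced words.

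\textbf{The gap is in (v).} The step that fails as written is the \(\tilde{s}\)-part. You propose an induction on \(\ell(w)\) ``reducing to \(w=s_a\) with \(s_a(k+1)=s_a(k)+1\)''. The hypothesis \(w(k+1)=w(k)+1\) is not inherited by shorter elements. Take \(w=s_1s_2\) and \(k=1\), so \(w(1)=2\) and \(w(2)=3\). Its only left descent gives \(s_1w=s_2\), and \(s_2(2)\neq s_2(1)+1\). Its only right descent forces you to move \(\sigma_1\) past \(\varphi_2\), which is not a commutation. Moreover, the \(s_a\) satisfying the hypothesis are exactly those with \(|a-k|>1\), where everything commutes trivially. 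The real content lies in length-two cases such as \(\varphi_k\varphi_{k+1}\sigma_k=\sigma_{k+1}\varphi_k\varphi_{k+1}\).

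\textbf{How to close it.} No induction is needed, because you already noted \(\varphi_w\varphi_k=\varphi_{w(k)}\varphi_w\) from (iii).
\begin{enumerate}[(a)]
\item Write \(\iota(\varphi_k e(\nu))=g\,\tilde{s}_k e(\nu)\), where \(g\) is a polynomial in \(x_k,x_{k+1}\) that is invertible in \(\mathcal{K}_{\beta}\).
\item By (iv), \(\iota(\varphi_w)\,g=(w.g)\,\iota(\varphi_w)\).
\item Since \((w\nu)_{w(k)}=\nu_k\) and \((w\nu)_{w(k)+1}=\nu_{k+1}\), the element \(w.g\) is exactly the prefactor of \(\iota(\varphi_{w(k)}e(w\nu))\).
\item Cancelling the invertible \(w.g\) from \(\iota(\varphi_w\varphi_k e(\nu))=\iota(\varphi_{w(k)}\varphi_w e(\nu))\) gives \(\iota(\varphi_w)\tilde{s}_k e(\nu)=\tilde{s}_{w(k)}\iota(\varphi_w)e(\nu)\).
\end{enumerate}
Combine this with your treatment of the correction terms in \(\iota(\sigma_k)\). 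That part uses (iv) extended to the inverses \((x_a\pm x_b)^{-1}\), which follows at once by multiplying through. Together these prove (v).

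\textbf{A smaller caution on the \(\c_k\)-part of (iv).} Here ``up to sign conventions'' hides the whole check. What you need is
\(g_{a,c_k\nu}=g_{a,\nu}|_{x_{s_a(k)}\mapsto -x_{s_a(k)}}\).
Note that the label changes at position \(k\), but the variable whose sign flips sits at \(s_a(k)\). Computing from the definition, when \(\nu_a=\nu_{a+1}\notin J^c\) one gets \(\iota(\varphi_a e(\nu))=(-1)^{\pi_2(\nu_a)}(x_{a+1}-x_a)\tilde{s}_a e(\nu)\). The formula displayed before the lemma has \(x_a-x_{a+1}\) instead. The identity holds with the computed sign but fails with the displayed one, for example at \(k=a\). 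Alternatively, verify \(\varphi_a\c_k=\c_{s_a(k)}\varphi_a\) directly in \(RC_{\beta}\) from (RC3) and (RC4), which avoids the issue entirely.
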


\begin{proof}
   The proof follows that of \cite[Lemma 1.5]{KKK}.
   Namely, we work in \(\mathcal{KS}_{\beta}\) via the injective map \(\iota\).
   For example, the case \(\pi_1(\nu_a) = \pi_1(\nu_{a+1}) \in I_{\mathsf{odd}}\) in (i) follows from the computation:
   \begin{align*}
      \iota(\varphi_a^2 e(\nu)) &= \iota(\varphi_a e(s_a(\nu)) \varphi_a e(\nu)) \\
      &= (x_{a+1}^2 - x_a^2) \tilde{s}_a (x_{a+1}^2 - x_a^2) \tilde{s}_a e(\nu) \\
      &= -(x_{a}^2 - x_{a+1}^2)^2 (- \frac{1}{(x_a - x_{a+1})^2} - \frac{1}{(x_a + x_{a+1})^2})e(\nu) \\
      &= ((x_a + x_{a+1})^2 + (x_a - x_{a+1})^2)e(\nu) \\
      &= 2(x_a^2 + x_{a+1}^2)e(\nu) = \iota(2(x_a^2 + x_{a+1}^2)e(\nu)).
   \end{align*}
   The proofs of (ii)--(v) are parallel to those of \cite[Lemma 1.5]{KKK}.
   Finally, (vi) follows from (i) by induction on the length of \(w\).
\end{proof}

For \(M \in RC_{\beta_1} \sgMod\) and \(N \in RC_{\beta_2} \sgMod\), the map
\[M \otimes N \to \Res^{\beta_1 + \beta_2}_{\beta_1,\beta_2}N \circ M; \quad u \otimes v \longmapsto (-1)^{\Par(u)\Par(v)} \varphi_{w[n,m]}v \otimes u\]
is a morphism of \(RC_{\beta_1} \otimes RC_{\beta_2}\)-supermodules.
Frobenius reciprocity then yields a morphism \(R_{M,N} \colon M \circ N \to N \circ M\) of \(RC_{\beta_1 + \beta_2}\)-modules, called the \textit{R-matrix}.
The family \(\{R_{M,N}\}\) satisfies the Yang--Baxter equation.

\subsection{Affinization}\label{subsec:affinization}

In this section we recall, following \cite{KP}, the theory of affinization for modules over quiver Hecke--Clifford superalgebras.
The results in the rest of this section will not be used in this paper, but we include them since they might be of independent interest.

In analogy with \cite[Definition 2.1]{KP}, we define an element \(\mathfrak{p}_{i,\beta} \in Z(RC_{\beta})\).

\begin{dfn}
   For \(i \in J\) and \(\beta \in \mathsf{Q}^+\), define \(\mathfrak{p}_{i,\beta} \in Z(RC_{\beta})\) by
   \[\mathfrak{p}_{i,\beta} = \sum_{\nu \in J^{\beta}} (\prod_{\pi_1(\nu_a) = i}((-1)^{\pi_2(\nu_a)}x_a)^{1 + \Par(\pi_1(\nu_a))})e(\nu).\]
\end{dfn}

\begin{dfn}
   Let \(\bar{M} \in RC_{\beta} \sgmod\) be a simple module.
   An \textit{affinization} of \(\bar{M}\) consists of \(\mathsf{M} \in RC_{\beta} \sMod\), an injective morphism \(z_{\mathsf{M}} \in \END_{RC_{\beta}}(\mathsf{M})\) of degree \(d_{\mathsf{M}} \in \mathbb{Z}_{> 0}\), and an isomorphism \(\mathsf{M}/z_{\mathsf{M}}\mathsf{M} \simeq \bar{M}\), subject to the following conditions:
   \begin{enumerate}[(i)]
      \item \(\mathsf{M}\) is a finitely generated free \(\mathbf{k}[z_{\mathsf{M}}]\)-module;
      \item \(\mathfrak{p}_{i} \mathsf{M} \neq 0\) for every \(i \in J\).
   \end{enumerate}
   If, in addition,
   \begin{enumerate}
      \item[(iii)] the exact sequence
      \[\xymatrix{0 \ar[r] & z_{\mathsf{M}}\mathsf{M}/z_{\mathsf{M}}^{2}\mathsf{M} \ar[r] & \mathsf{M}/z_{\mathsf{M}}^{2}\mathsf{M} \ar[r] & \mathsf{M}/z_{\mathsf{M}}\mathsf{M} \ar[r] & 0}\]
      does not split,
   \end{enumerate}
   then \(\mathsf{M}\) is called a \textit{strong affinization}.
   If, moreover, \(d_{M} \in 2\mathbb{Z}_{>0}\), then \(\mathsf{M}\) is called an \textit{even affinization}.
\end{dfn}

Let \(\mathsf{M}\) be a weak affinization of \(\bar{M}\), and let \(\gamma \in \mathsf{Q}^{+}\) and \(0 \neq N \in RC_{\gamma} \sgmod\).
By definition of the affinization, there exists a maximal integer \(s_{\mathsf{M},N} \in \mathbb{Z}_{\geq 0}\) such that \(R_{\mathsf{M},N}(\mathsf{M} \circ N) \subset z_{\mathsf{M}}^s N \circ \mathsf{M}\).
The morphism of \(RC_{\beta + \gamma}\)-modules \(R^{\mathrm{norm}}_{\mathsf{M},N}\) defined by
\[R^{\mathrm{norm}}_{\mathsf{M},N} = z^{-s_{\mathsf{M},N}}R_{\mathsf{M},N} \colon \mathsf{M} \circ N \to N \circ \mathsf{M}.\]
The induced morphism
\[r_{\bar{M},N} \colon \bar{M} \circ N \to N \circ \bar{M}\]
is called the \textit{renormalized R-matrix}.

\begin{ex}
   Let \(i \in I_{\mathsf{odd}}\).
   The algebra \(RC_{2\alpha_i}\) is called the \textit{NilHecke--Clifford superalgebra} \cite{L}.
   Define the irreducible representation \(L(i) \simeq \mathcal{C}_1\) of \(RC_{\alpha_i}\) by
   \[x_1.\c = 0, \quad \c_1.\c^i = \c^{i+1}~(i = 0,1).\]
   The convolution product \(L(i) \circ L(i)\) is an \(RC_{2\alpha_i}\)-module generated by \(\sigma_1.1 \otimes 1\).
   Define an idempotent \(b_2 \in RC_{2\alpha_i}\) by
   \[b_2 = \frac{\c_1 \c_2 - 1}{2}\sigma_1 x_1.\]
   Indeed,
   \begin{align*}
      b_2^2 &= \frac{\c_1 \c_2 - 1}{2}\sigma_1 x_1 \frac{\c_1 \c_2 - 1}{2}\sigma_1 x_1 \\
      &= \frac{\c_1 \c_2 - 1}{2}(x_2 \sigma_1 - 1 - \c_1 \c_2)\frac{\c_1 \c_2 - 1}{2}\sigma_1 x_1 \\
      &= \frac{\c_1 \c_2 - 1}{2} \frac{(-1)^2 - (\c_1\c_2)^2}{2}\sigma_1 x_1 \\
      &= \frac{\c_1 \c_2 - 1}{2}\sigma_1 x_1 = b_2.
   \end{align*}
   Since the projective module \(RC_{2\alpha_i}b_2\) is isomorphic to the polynomial representation \(RC_{2\alpha_i}/RC_{2\alpha_i}\sigma_1\), there exists a surjection \(RC_{2\alpha_i}b_2 \twoheadrightarrow L(i) \circ L(i)\).
   This provides a strong affinization of \(L(i) \circ L(i)\).
\end{ex}

Using the element \(\mathfrak{p}_{i,\beta}\) and the affinization introduced above, analogous results hold for quiver Hecke--Clifford superalgebras by arguments parallel to those of \cite{KP}.
For example, the following is an analogue of \cite[Theorem 2.13]{KP}; we omit the proof.

\begin{thm}
   Let \((\mathsf{M},z_{\mathsf{M}})\) and \((\mathsf{N},z_{\mathsf{N}})\) be affinizations of simple modules \(\bar{M}\) and \(\bar{N}\), respectively, at least one of which is real.
   Then \(R^{\mathrm{norm}}_{\mathsf{M},\mathsf{N}}|_{z_{\mathsf{M}} = z_{\mathsf{N}} = 0} \colon \bar{M} \circ \bar{N} \to \bar{N} \circ \bar{M}\) does not vanish and equals a scalar multiple of \(r_{\bar{M},\bar{N}}\).
\end{thm}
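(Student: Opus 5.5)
The plan follows the proof of \cite[Theorem 2.13]{KP}; the only new inputs are the Clifford-type relations, which enter through Lemma \ref{lem:property of intertwiner}. Set \(\beta = \beta_M\), \(\gamma = \beta_N\), and assume without loss of generality that \(\bar{M}\) is real.

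\textbf{Reduction to a simple target.} Consider the \(\Bbbk[z_{\mathsf{M}},z_{\mathsf{N}}]\)-linear morphism \(R^{\mathrm{norm}}_{\mathsf{M},\mathsf{N}} = (z\text{-power})^{-1}R_{\mathsf{M},\mathsf{N}}\), normalized so that it does not vanish modulo the ideal \((z_{\mathsf{M}},z_{\mathsf{N}})\). Here the normalization is by the maximal power of \(z_{\mathsf{M}}\) and \(z_{\mathsf{N}}\) dividing \(R_{\mathsf{M},\mathsf{N}}\). For this we need \(R_{\mathsf{M},\mathsf{N}} \neq 0\), which I would deduce as in \cite{KKK}. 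Using Lemma \ref{lem:property of intertwiner}(vi), the composition \(R_{\mathsf{N},\mathsf{M}} \circ R_{\mathsf{M},\mathsf{N}}\) is multiplication by the product of the factors \(\tilde{Q}_{\nu_a,\nu_b}(x_a,x_b)\), \(1\), or \(2(x_a^2+x_b^2)\), taken over \(a \le n_1 < b\). By condition (ii) of affinization (\(\mathfrak{p}_i\mathsf{M} \neq 0\)) and freeness over \(\Bbbk[z]\), this product acts injectively on \(\mathsf{M} \circ \mathsf{N}\). Specializing to \(z_{\mathsf{M}} = z_{\mathsf{N}} = 0\) then gives a nonzero map \(\bar{M} \circ \bar{N} \to \bar{N} \circ \bar{M}\). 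Provided the divisibility bookkeeping is done correctly, the non-vanishing of the specialization is just the definition of normalization. The subtle point is that the normalization in two variables may differ from specializing first in \(z_{\mathsf{N}}\) and then normalizing in \(z_{\mathsf{M}}\). Following \cite{KP}, I would handle this by a homogeneity argument: the map is graded, and \(\deg z_{\mathsf{M}}, \deg z_{\mathsf{N}} > 0\), so the leading coefficient can be chosen to be a single monomial.

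\textbf{Key step: the Hom-space is one-dimensional.} Next I would show that \(\HOM_{RC_{\beta+\gamma}}(\bar{M}\circ\bar{N},\bar{N}\circ\bar{M})\) is spanned by \(r_{\bar{M},\bar{N}}\), up to parity twist \(\Pi\). Because \(\bar{M}\) is real, \(\bar{M}\circ\bar{N}\) has a simple head, and every nonzero morphism factors through it. The cleanest route uses the monoidal equivalence \(F\) of Proposition \ref{prop:morita equivalence for quiver Hecke--Clifford superalgebra} (stated in the introduction). \(F\) transports affinizations, intertwiners and \(R\)-matrices to those of the symmetric quiver Hecke algebra \(R_\beta\). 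There the statement is \cite[Theorem 2.13]{KP}, and the claim pulls back.

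\textbf{Main obstacle.} The hardest part is checking that \(F\) is compatible with \(R\)-matrices. We must show that \(e^\dagger(\beta+\gamma)\varphi_{w[n_1,n_2]}\) corresponds, up to an invertible scalar, to the KLR intertwiner acting on \(F(\mathsf{M})\circ F(\mathsf{N})\). This I would verify in \(\mathcal{KS}_\beta\), using \(\iota\) and the explicit formula for \(\iota(\varphi_a e(\nu))\). The concern is that the factors \((x_a+x_{a+1})\) and \((x_{a+1}^2-x_a^2)\) might rescale the map by a nonunit polynomial. Any such nonunit factor is absorbed by the renormalization, because it specializes to a scalar once divided out.

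Finally, Clifford elements can make endomorphism spaces of simples two-dimensional (type Q). In that case "scalar multiple" must be read in the supercommutant \(\END(\bar{M}\circ\bar{N})\). To fix this I would use that \(r_{\bar{M},\bar{N}}\) is even and that the specialized map is also even.
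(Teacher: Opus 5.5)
\textbf{Gap: the transfer to \cite{KP} through \(F\) only covers \(I_{\mathsf{odd}}=\varnothing\).} The only part of your proposal that actually proves something is the transfer \(F=e^{\dagger}(\beta).-\) to \cite{KP}. When \(I_{\mathsf{odd}}=\varnothing\) (in particular in the symmetric case, by Lemma~\ref{lem:symmetric quiver Hecke superalgebra is quiver Hecke algebra}), this is a valid route, and shorter than re-running \cite{KP}. It delivers the whole statement, non-vanishing included:
\begin{enumerate}[(i)]
\item \(e^{\dagger}(\beta)\mathsf{M}\) is a graded \(\Bbbk[z_{\mathsf{M}}]\)-direct summand of \(\mathsf{M}\), hence free;
\item \(\mathsf{M}=RC_{\beta}e^{\dagger}(\beta)\mathsf{M}\) and the centrality of \(\mathfrak{p}_{i,\beta}\) give \(\mathfrak{p}_{i,\beta}e^{\dagger}(\beta)\mathsf{M}\neq 0\);
\item Lemma~\ref{lem:compatibility of Morita and R-matrix} identifies the R-matrices up to a sign twist, so divisibility in \(\Bbbk[z_{\mathsf{M}},z_{\mathsf{N}}]\) and specializations correspond.
\end{enumerate}
Your ``main obstacle'' does not arise: on the \(e^{\dagger}\)-part every \(\pi_2(\nu_a)=0\), so \(\varphi_a e(\nu^{\dagger})\) is literally the intertwiner of \cite{KKK}. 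Your proposed remedy would also be invalid if it were needed, since a factor such as \(x_a+x_{a+1}\) is not a power of \(z\) and does not specialize to a scalar.

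However, the theorem is stated in Section~\ref{subsec:affinization}, before the restriction to the symmetric case, for arbitrary \(RC_{\beta}\). The exponent \(1+\Par(\pi_1(\nu_a))\) in \(\mathfrak{p}_{i,\beta}\) and the NilHecke--Clifford example with \(i\in I_{\mathsf{odd}}\) show this. The paper's (omitted) proof is to run \cite{KP} directly inside \(RC_{\beta}\). When \(I_{\mathsf{odd}}\neq\varnothing\), \(e^{\dagger}\) only relates \(RC_{\beta}\) to the quiver Hecke \emph{superalgebra} of \cite{KKT} (odd \(x_i\), supercommutation relations), to which \cite{KP} does not apply. So neither the non-vanishing nor the one-dimensionality of the Hom space can be imported.

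\textbf{Gap: your direct argument does not give non-vanishing.} For that general case your first paragraph is the only direct argument, and its key claim is false: non-vanishing at \(z_{\mathsf{M}}=z_{\mathsf{N}}=0\) is not ``just the definition of normalization''. Dividing by the largest monomial \(z_{\mathsf{M}}^{a}z_{\mathsf{N}}^{b}\) is not enough.
\begin{enumerate}[(i)]
\item Take \(\mathsf{M}=\bar{M}_z\), \(\mathsf{N}=\bar{N}_{z'}\) in the symmetric case. The simultaneous shift \(x_k e(\nu)\mapsto x_k e(\nu)+(-1)^{\pi_2(\nu_k)}z'e(\nu)\) preserves the relations and the intertwiners. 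It identifies \(R_{\mathsf{M},\mathsf{N}}\) with \(R_{\bar M_{z-z'},\bar N}\), extended \(\Bbbk[z']\)-linearly.
\item Hence \(R_{\mathsf{M},\mathsf{N}}\) is divisible by \((z-z')^{s}\), with \(s\) the exponent used in Definition~\ref{dfn:renormalized R-matrix} to define \(r_{\bar M,\bar N}\), but by no monomial.
\item So your normalized map still specializes to \(R_{\bar M,\bar N}\), which is \(0\) whenever \(s>0\).
\end{enumerate}
Homogeneity does not rescue this: for \(d_{\mathsf{M}}=d_{\mathsf{N}}\), homogeneous elements of \(\Bbbk[z_{\mathsf{M}},z_{\mathsf{N}}]\) are arbitrary binary forms, not monomials. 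More fundamentally, \((z_{\mathsf{M}},z_{\mathsf{N}})\) is not principal, so a graded \(\Bbbk[z_{\mathsf{M}},z_{\mathsf{N}}]\)-linear map can vanish at the origin without being divisible by any non-unit.

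That some normalization survives at the origin is the actual content of the theorem, and it is where reality has to enter. It requires controlling \(\HOM(\mathsf{M}\circ\mathsf{N},\mathsf{N}\circ\mathsf{M})\) as a \(\Bbbk[z_{\mathsf{M}},z_{\mathsf{N}}]\)-module. One way is to compare the one-variable specializations \(R_{\mathsf{M},\mathsf{N}}|_{z_{\mathsf{N}}=0}=R_{\mathsf{M},\bar N}\) and \(R_{\mathsf{M},\mathsf{N}}|_{z_{\mathsf{M}}=0}=R_{\bar M,\mathsf{N}}\), whose renormalizations give \(r_{\bar M,\bar N}\) and \(r'_{\bar M,\bar N}\). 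This must be combined with a one-dimensionality statement for \(\HOM(\bar M\circ\bar N,\bar N\circ\bar M)\) proved inside \(RC_{\beta+\gamma}\). Your injectivity argument via Lemma~\ref{lem:property of intertwiner}(vi) and \(\mathfrak{p}_{i,\beta}\) yields only \(R_{\mathsf{M},\mathsf{N}}\neq 0\), which falls well short of this.
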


\subsection{Symmetric quiver Hecke--Clifford superalgebras}\label{subsec:symmetric quiver Hecke--Clifford superalgebras}

We restrict attention to the symmetric case and discuss the representation theory of quiver Hecke--Clifford superalgebras.
In particular, we explain how the corresponding quiver Hecke algebra can be used to study quiver Hecke--Clifford superalgebras.

\begin{dfn}
   A quiver Hecke--Clifford superalgebra \(RC_{\beta}\) is called \textit{symmetric} if:
   \begin{enumerate}[(i)]
      \item the Cartan matrix \(A\) is symmetric;
      \item \((\alpha_i,\alpha_j) = a_{ij}\);
      \item \(\tilde{Q}_{ij}(u,v) \in \Bbbk[(-1)^{\pi_2(i)}u - (-1)^{\pi_2(j)}v]\).
   \end{enumerate}
\end{dfn}

\begin{lem}\label{lem:symmetric quiver Hecke superalgebra is quiver Hecke algebra}
   If \(RC_{\beta}\) is symmetric, then \(I_{\mathsf{odd}} = \varnothing\).
\end{lem}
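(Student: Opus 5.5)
The plan is to argue by contradiction. Suppose some \(i \in I_{\mathsf{odd}}\) exists, and exploit the tension between condition (iii) of the symmetry definition and the parity constraints built into \(\tilde{\mathcal{Q}}\).

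First, I will record what oddness forces on the polynomials. For \(i \in I_{\mathsf{odd}}\), the superdatum condition gives \(a_{ij} \in 2\mathbb{Z}\) for all \(j\), and condition (iii) in the definition of \(S_{ij}\) forces the exponent \(r\) of the first variable to be even. Hence \(\mathcal{Q}_{ij}(u,v)\) is a polynomial in \(u^2\) and \(v\). This matches the stated membership \(\tilde{\mathcal{Q}}_{(i,0),j}(u,v) \in \Bbbk[u^{2}, v^{1+\pi_2(j)}]\). In particular \(\tilde{\mathcal{Q}}_{(i,0),j}(u,v) = \tilde{\mathcal{Q}}_{(i,0),j}(-u,v)\).

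Second, I will take \(j \neq i\) with \(a_{ij} \neq 0\) and compare this with condition (iii) of symmetry, which requires \(\tilde{\mathcal{Q}}_{(i,0),j}(u,v) = f(u - (-1)^{\pi_2(j)}v)\) for a one-variable polynomial \(f\). If \(f(w)=\sum c_k w^k\) has degree \(d \geq 1\), then the coefficient of \(u^{d-1}v\) in \(f(u-\epsilon v)\) is \(-\epsilon d c_d \neq 0\), since \(\ch \Bbbk \neq 2\) and we may assume \(d<\ch\Bbbk\) or handle it via the top-degree term. The constraint that \(u\) appear only to even powers then forces \(d-1\) to be even. A cleaner route is to use \(f(u-\epsilon v)=f(-u-\epsilon v)\). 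Setting \(w=u-\epsilon v\), this reads \(f(w)=f(-w-2\epsilon v)\) identically in \(u,v\), and comparing top-degree terms in \(v\) forces \(f\) to be constant. But condition (b), \(t_{i,j;-a_{ij},0}\in\Bbbk^\times\), says that \(\mathcal{Q}_{ij}\) has a nonzero \(u^{-a_{ij}}\) term with \(-a_{ij}>0\), so \(f\) is nonconstant. This is a contradiction whenever such a \(j\) exists.

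Third, I must handle an odd \(i\) that is isolated, meaning \(a_{ij}=0\) for all \(j\neq i\). Here I expect the symmetric-case conventions to rule it out: \((\alpha_i,\alpha_i)=a_{ii}=2\), while odd simple roots are expected to be compatible with \(\tilde{\mathcal{Q}}_{(i,0),(i,0)}\). The diagonal case should also be tested against (iii). The relations in \(RC\) for \(\nu_a=\nu_{a+1}\in J^c\) involve \(x_a+x_b\), as in \(R_{a,b}\), which conflicts with the symmetric requirement that everything depend only on \((-1)^{\pi_2}x_a-(-1)^{\pi_2}x_b\). I expect this step to be the main obstacle, because the definition as written may not literally exclude isolated odd vertices. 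If it does not, I would interpret symmetry as also constraining the diagonal data, i.e.\ the \(x_a+x_b\) terms in \(R_{a,b}\), and derive the contradiction from that, or else note that the paper's convention requires \(I\) to be connected.
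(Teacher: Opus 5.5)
Your Steps 1--2 are correct and are essentially the paper's argument. For odd $i$ with a neighbour $j$, condition (iii) of symmetry forces $\tilde{\mathcal{Q}}_{(i,0),j}(u,v)=f(u-\epsilon v)$ with $\epsilon=(-1)^{\pi_2(j)}$, and this clashes with the parity condition (iii) on $S_{ij}$.

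The paper finishes with your abandoned first attempt. It reads off the monomial $u^{-a_{ij}-1}v$ from $t_{i,j;-a_{ij},0}(u-\epsilon v)^{-a_{ij}}$ and concludes $(-a_{ij}-1,1)\in S_{ij}$ with $-a_{ij}-1$ odd. That step tacitly needs $\ch\Bbbk\nmid a_{ij}$. Your replacement, $f(w)=f(-w-2\epsilon v)$ forcing $f$ to be constant and contradicting $t_{i,j;-a_{ij},0}\in\Bbbk^{\times}$, works in every characteristic $\neq 2$, so it is a slight improvement.

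Step 3, however, is not a proof, and no proof can exist: an isolated odd vertex is a counterexample to the statement as written.
\begin{itemize}
\item Take $I=I_{\mathsf{odd}}=\{i\}$, $A=(2)$ and $(\alpha_i,\alpha_i)=2$. This is a Cartan superdatum and there are no sets $S_{ij}$. Since $\tilde{\mathcal{Q}}_{(i,0),(i,0)}=0$, conditions (i)--(iii) of symmetry all hold, yet $I_{\mathsf{odd}}\neq\varnothing$. Here $RC_{n\alpha_i}$ is the nilHecke--Clifford superalgebra.
\item More generally, suppose $a_{ij}=0$ for all $j\neq i$. Then each $\tilde{\mathcal{Q}}_{(i,0),j}=t_{i,j;0,0}$ is a constant, which lies in every $\Bbbk[\pm u\mp v]$.
\end{itemize}
The paper's proof has the same hole. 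After reducing to indecomposable $A$, it asserts a $j\neq i$ with $a_{ij}\in 2\mathbb{Z}_{<0}$, and this fails when the component is a single vertex.

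Neither of your proposed patches closes the gap:
\begin{itemize}
\item Connectedness of $I$ does not exclude $|I|=1$.
\item The $(x_a+x_b)^{-1}$ terms in $R_{a,b}$ are not part of the definition of ``symmetric''. Imposing a condition on them changes the definition rather than proving the lemma.
\end{itemize}
The honest fix is an extra hypothesis, e.g.\ that every odd vertex has a neighbour (every connected component of $A$ has at least two vertices). Under that hypothesis your Steps 1--2 already give a complete proof.
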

\begin{proof}
   We may assume that \(A\) is indecomposable.
   Suppose, for a contradiction, that \(I_{\mathsf{odd}} \neq \varnothing\), and pick \(i \in I_{\mathsf{odd}}\).
   By the indecomposability of \(A\), there exists \(j \neq i\) such that \(a_{ij} \in 2\mathbb{Z}_{<0}\).
   It follows that
   \[\tilde{\mathcal{Q}}_{ij}(u,v) - t_{i,j;-a_{ij},0}(u - (-1)^{\pi_2(j)}v)^{-a_{ij}} \in \sum_{r < -a_{ij}} \Bbbk.(u - (-1)^{\pi_2(j)}v)^{r}.\]
   Hence \((-a_{ij} - 1,1) \in S_{ij}\), which contradicts the definition of \(S_{ij}\).
\end{proof}

Throughout the remainder of this paper, we restrict our attention to symmetric quiver Hecke--Clifford superalgebras.

\begin{prop}[{\cite[Section 3.5]{KKT}}]
   For \(\beta \in \mathsf{Q}^{+}\), define the idempotent \(e^{\dagger}(\beta) \in RC_{\beta}\) by
   \[e^{\dagger}(\beta) = \sum_{i , \pi_2(\nu_i) = 0}e(\nu).\]
   Then \(RC_{\beta}e^{\dagger}(\beta)RC_{\beta} = RC_{\beta}\).
   In particular, setting \(RC^{\dagger}_{\beta} = e^{\dagger}(\beta)RC_{\beta}e^{\dagger}(\beta)\), the superalgebras \(RC_{\beta}\) and \(RC^{\dagger}_{\beta}\) are Morita superequivalent.
\end{prop}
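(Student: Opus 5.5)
The plan is to check the three conditions of Example \ref{ex:full even idempotent} for \(e^{\dagger}(\beta)\), where the sum is understood over all \(\nu \in J^{\beta}\) with \(\pi_2(\nu_i) = 0\) for every \(i\). The Morita superequivalence between \(RC_{\beta}\) and \(RC^{\dagger}_{\beta}\) then follows directly from that example.

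Conditions (i) and (ii) are immediate. Each \(e(\nu)\) is even, and by (RC1) the \(e(\nu)\) are pairwise orthogonal idempotents. Hence \(e^{\dagger}(\beta)\), being a sum of distinct \(e(\nu)\)'s, is an even idempotent.

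The real content is (iii), \(RC_{\beta}e^{\dagger}(\beta)RC_{\beta} = RC_{\beta}\). Since \(1 = \sum_{\nu \in J^{\beta}} e(\nu)\), it suffices to show that every \(e(\nu)\) lies in the two-sided ideal generated by \(e^{\dagger}(\beta)\).

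Here the symmetry assumption enters. By Lemma \ref{lem:symmetric quiver Hecke superalgebra is quiver Hecke algebra} we have \(I_{\mathsf{odd}} = \varnothing\). Therefore \(J = I \times \{0,1\}\), \(J^c = \varnothing\), and the involution \(c\) always flips the second coordinate.

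Fix \(\nu \in J^{\beta}\) and let \(S = \{p_1 < \cdots < p_k\}\) be the set of positions with \(\pi_2(\nu_p) = 1\). Put \(\c_S = \c_{p_1} \cdots \c_{p_k}\). Applying the relation \(\c_p e(\mu) = e(c_p(\mu))\c_p\) from (RC1) repeatedly gives \(\c_S e(\nu) = e(\nu')\c_S\). Here \(\nu'\) is obtained from \(\nu\) by flipping the second coordinate at each position in \(S\), so \(\pi_2(\nu'_i) = 0\) for all \(i\). Consequently \(e(\nu') = e(\nu')e^{\dagger}(\beta)\).

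By (RC2), distinct \(\c_p\) anticommute and each squares to \(1\). Hence \(\c_S^2 = \pm 1\), so \(\c_S\) is invertible with \(\c_S^{-1} = \pm \c_S\). We then obtain
\[e(\nu) = \c_S^{-1}e(\nu')\c_S = \c_S^{-1}e(\nu')e^{\dagger}(\beta)\c_S \in RC_{\beta}e^{\dagger}(\beta)RC_{\beta}.\]
Summing over \(\nu\) gives \(1 \in RC_{\beta}e^{\dagger}(\beta)RC_{\beta}\), which proves fullness.

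The only step needing care is the use of Lemma \ref{lem:symmetric quiver Hecke superalgebra is quiver Hecke algebra}. Without it, positions with \(\nu_p \in J^c\) would be fixed by \(c\). The Clifford generators could then not be used to move such an \(e(\nu)\) into the image of \(e^{\dagger}(\beta)\), although in that case such indices already have \(\pi_2 = 0\) and would cause no difficulty anyway. Everything else is a direct manipulation of the defining relations (RC1) and (RC2).
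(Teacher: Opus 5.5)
Your proof is correct. The paper gives no argument of its own here and simply cites \cite[Section 3.5]{KKT}; your route is the standard one: Clifford conjugation \(e(\nu)=\c_S^{-1}e(\nu')\c_S\) with \(e(\nu')=e(\nu')e^{\dagger}(\beta)\) gives fullness, and Example~\ref{ex:full even idempotent} then gives the Morita superequivalence. As you observe at the end, Lemma~\ref{lem:symmetric quiver Hecke superalgebra is quiver Hecke algebra} is not actually needed. By the definition of \(J\), any position with \(\pi_2(\nu_p)=1\) has \(\pi_1(\nu_p)\in I_{\mathsf{even}}\) and so is flipped by \(c\). Hence your argument proves fullness for an arbitrary Cartan superdatum, without the symmetry hypothesis.
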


\begin{thm}[{\cite[Theorem 3.13]{KKT}}]\label{thm:super morita equivariant for quiver Hecke--Clifford superalgebra}
   Let \(R_{\beta}\) denote the quiver Hecke algebra associated with the same Cartan datum and family of polynomials \(\{\mathcal{Q}_{i,j}\}_{i,j \in I}\) as \(RC_{\beta}\). Then there is an isomorphism of (super)algebras
   \[R_{\beta} \simeq RC^{\dagger}_{\beta}.\]
   In particular, \(R_{\beta}\) and \(RC_{\beta}\) are Morita superequivalent.
\end{thm}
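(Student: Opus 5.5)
The plan is to write down an explicit homomorphism \(R_{\beta} \to RC^{\dagger}_{\beta}\) and check it is bijective using the PBW basis of Remark \ref{rem:PBW for quiver Hecke--Clifford} together with the PBW basis of \(R_{\beta}\). We are in the symmetric setting, so Lemma \ref{lem:symmetric quiver Hecke superalgebra is quiver Hecke algebra} gives \(I_{\mathsf{odd}} = \varnothing\). Hence \(J = I \times \{0,1\}\), \(J^c = \varnothing\), and \(R_{\beta}\) is an ordinary (purely even) quiver Hecke algebra. For \(\nu \in I^{\beta}\) write \(\nu^{0} = ((\nu_1,0),\ldots,(\nu_n,0)) \in J^{\beta}\). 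Then \(e^{\dagger}(\beta) = \sum_{\nu \in I^{\beta}} e(\nu^0)\), which is an even idempotent.

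\textbf{Step 1: the homomorphism.} Define \(\Phi\) by
\[e(\nu) \mapsto e(\nu^0), \qquad x_i \mapsto x_i e^{\dagger}(\beta), \qquad \sigma_k \mapsto \sigma_k e^{\dagger}(\beta).\]
By (RC1) and (RC4), \(e^{\dagger}(\beta)\) commutes with \(x_i\) and \(\sigma_k\), so these images lie in \(e^{\dagger}(\beta)RC_{\beta}e^{\dagger}(\beta)\). We then check the defining relations of \(R_{\beta}\) on these images.
\begin{enumerate}[(i)]
\item On \(e(\nu^0)\) the case \(\nu^0_a = c(\nu^0_{a+1})\) never occurs, since \(c(i,0) = (i,1)\). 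So (RC6) and (RC7) reduce to (R4) with all signs trivial.
\item For \(\pi_2 = 0\) we have \(\tilde{\mathcal{Q}}_{\nu^0_a,\nu^0_{a+1}}(u,v) = \mathcal{Q}_{\nu_a,\nu_{a+1}}(u,v)\), so (RC8) becomes (R5).
\item In (RC10) the second case again cannot occur, so it becomes (R7).
\item The remaining relations match verbatim.
\end{enumerate}
Thus \(\Phi\) is a well-defined algebra homomorphism.

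\textbf{Step 2: bijectivity.} By Remark \ref{rem:PBW for quiver Hecke--Clifford}, \(e^{\dagger}(\beta)RC_{\beta}e^{\dagger}(\beta)\) is spanned by the elements \(e(\mu^0)\,x^{a}\c^{\eta}\sigma_w e(\nu^0)\). Using (RC1) and (RC4), we move the idempotent on the right through the word and get
\[x^{a}\c^{\eta}\sigma_w e(\nu^0) = e\bigl(c^{\eta}(w\nu^0)\bigr)\,x^{a}\c^{\eta}\sigma_w,\]
where \(c^{\eta}\) applies \(c_p\) at each position \(p\) with \(\eta_p = 1\). The prefactor \(e(\mu^0)\) kills this element unless \(c^{\eta}(w\nu^0) = \mu^0\). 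Since \(w\nu^0\) already has all second components \(0\) and \(c\) flips the second component, this forces \(\eta = 0\). Hence \(e^{\dagger}(\beta)RC_{\beta}e^{\dagger}(\beta)\) has basis \(\{x^{a}\sigma_w e(\nu^0)\}\), which is exactly the image under \(\Phi\) of the PBW basis \(\{x^{a}\sigma_w e(\nu)\}\) of \(R_{\beta}\). Therefore \(\Phi\) is an isomorphism. The same computation shows that \(RC^{\dagger}_{\beta}\) contains no odd elements, so \(\Phi\) is even an isomorphism of superalgebras with \(R_{\beta}\) carrying the trivial grading.

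\textbf{Step 3: Morita superequivalence.} It remains to show that \(e^{\dagger}(\beta)\) is a full even idempotent, as in Example \ref{ex:full even idempotent}. Take any \(\nu \in J^{\beta}\) and let \(\eta\) be the set of positions where \(\pi_2(\nu_p) = 1\). Since \(\c_p^2 = 1\), relation (RC1) gives
\[e(\nu) = \c^{\eta}\, e(\nu')\, (\c^{\eta})^{-1},\]
where \(\nu'\) is obtained from \(\nu\) by setting every second component to \(0\). Hence \(e(\nu) \in RC_{\beta}e^{\dagger}(\beta)RC_{\beta}\). Summing over \(\nu\) gives \(1 \in RC_{\beta}e^{\dagger}(\beta)RC_{\beta}\), and Example \ref{ex:full even idempotent} yields the Morita superequivalence between \(RC_{\beta}\) and \(RC^{\dagger}_{\beta} \simeq R_{\beta}\).

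The main obstacle is the bookkeeping in Step 2: tracking how \(\c^{\eta}\) and \(\sigma_w\) act on the idempotents, so as to confirm that no Clifford terms survive inside \(e^{\dagger}(\beta)RC_{\beta}e^{\dagger}(\beta)\).
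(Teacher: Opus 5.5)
Your proposal is correct and takes the same approach as the paper. The paper takes the statement from \cite{KKT} and records the same map \(e(\nu)\mapsto e(\nu^{\dagger})\), \(x_p e(\nu)\mapsto x_p e(\nu^{\dagger})\), \(\sigma_a e(\nu)\mapsto \sigma_a e(\nu^{\dagger})\); the vanishing \(e^{\dagger}(\beta)\c_i e^{\dagger}(\beta)=0\), which is what your PBW bookkeeping establishes; and fullness of \(e^{\dagger}(\beta)\) for the Morita superequivalence. One caveat for your Step 1 (iii): the paper's displayed (R7) has the two braid words in the opposite order from (RC10). This is a typo, since the order in (RC10), as in \cite{KKT}, is the one consistent with (R4), so your identification of (RC10) with (R7) is right for the intended definition.
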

   Explicitly, the isomorphism \(R_{\beta} \simeq RC^{\dagger}_{\beta}\) sends
   \begin{align*}
      \begin{array}{ccc}
         R_{\beta} & \longrightarrow & RC^{\dagger}_{\beta} \\
         \rotatebox{90}{$\in$} & & \rotatebox{90}{$\in$} \\
         e(\nu) & \longmapsto & e(\nu^{\dagger}) \\
         x_{p}e(\nu) & \longmapsto & x_{p}e(\nu^{\dagger}) \\
         \sigma_{a}e(\nu) & \longmapsto & \sigma_{a}e(\nu^{\dagger})
      \end{array}
   \end{align*}
   where \(\nu^{\dagger} = ((\nu_1,\overline{0}),\ldots,(\nu_n,\overline{0})) \in J^{\beta}\).
   Note that \(e^{\dagger}(\beta)\c_i e^{\dagger}(\beta) = 0\).

The category \(R_{\beta} \sgMod\) appearing in the equivalence \(RC^{\dagger}_{\beta} \sgMod \simeq R_{\beta} \sgMod\) admits the following description:
\begin{align*}
   \Ob(R_{\beta} \sgMod ) &= \{M_{\overline{0}} \oplus M_{\overline{1}} \mid M_{\overline{0}},M_{\overline{1}} \in R_{\beta} \gMod \}, \\
   \Hom_{R_{\beta} \sgMod }(M_{\overline{0}} \oplus M_{\overline{1}},N_{\overline{0}} \oplus N_{\overline{1}}) &= \left\{\begin{bmatrix}
   f_{00} & f_{01} \\
   f_{10} & f_{11} \\
   \end{bmatrix}
   \mid f_{ij} \colon M_{\overline{i}} \to N_{\overline{j}} \text{ in }R_{\beta} \gMod  \right\}.
\end{align*}
The equivalence is realized by the pair of functors
\begin{align*}
   e^{\dagger}(\beta).- &\colon RC_{\beta} \sgMod  \rightarrow RC_{\beta}^{\dagger} \sgMod, \\
   RC_{\beta}e^{\dagger}(\beta) \otimes_{RC^{\dagger}_{\beta}} - &\colon RC^{\dagger}_{\beta} \sgMod  \rightarrow RC_{\beta} \sgMod.
\end{align*}
For any \(M \in RC^{\dagger}_{\beta} \sgMod \), there is an isomorphism of super vector spaces
\[
   RC_{\beta}e^{\dagger}(\beta)\otimes_{RC^{\dagger}_{\beta}} M \simeq \mathcal{C}_{n} \otimes_{\Bbbk} M.
\]
In particular, restricting to finite-dimensional modules induces the equivalence
\[RC_{\beta}e^{\dagger}(\beta) \otimes_{RC^{\dagger}_{\beta}} - \colon R_{\beta} \sgmod  \rightarrow RC_{\beta} \sgmod .\]

\begin{prop}\label{prop:morita equivalence for quiver Hecke--Clifford superalgebra}
   Let \(F_{\beta} = e^{\dagger}(\beta).- \colon RC_{\beta} \sgMod \to R_{\beta} \sgMod\) denote the equivalence of categories described above.
   For any \(M_i \in RC_{\beta_i} \sgMod\) (\(i = 1,2\)), there is an isomorphism
   \[F_{\beta_1 + \beta_2}(M_1 \circ M_2) \simeq F_{\beta_1}(M_1) \circ F_{\beta_2}(M_2).\]
   Equivalently, \(F \coloneqq \bigoplus_{\beta \in \mathsf{Q}^+} F_{\beta}\) is a monoidal functor.
\end{prop}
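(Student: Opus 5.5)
Write \(\beta = \beta_1+\beta_2\), \(n_i = |\beta_i|\), \(n=n_1+n_2\). The strategy is to write both sides as tensor products over \(RC_{\beta_1}\otimes RC_{\beta_2}\) and compare the bimodules appearing there.

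First, \(F_{\beta_1}(M_1)\otimes F_{\beta_2}(M_2) = (e^{\dagger}(\beta_1)\otimes e^{\dagger}(\beta_2))(M_1\otimes M_2)\). Under the embedding of Definition \ref{dfn:convolution product}, the idempotent \(e^{\dagger}(\beta_1)\otimes e^{\dagger}(\beta_2)\) maps to \(e^{\dagger}(\beta)e(\beta_1,\beta_2)\). The isomorphism \(R_{\beta}\simeq RC^{\dagger}_{\beta}\) of Theorem \ref{thm:super morita equivariant for quiver Hecke--Clifford superalgebra} is compatible with the embeddings \(R_{\beta_1}\otimes R_{\beta_2}\to R_{\beta}\) and \(RC^{\dagger}_{\beta_1}\otimes RC^{\dagger}_{\beta_2}\to RC^{\dagger}_{\beta}\), since both are defined on generators by \(\nu\mapsto \nu^{\dagger}\). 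This gives
\[F_{\beta_1}(M_1)\circ F_{\beta_2}(M_2)\simeq e^{\dagger}(\beta)RC_{\beta}e^{\dagger}(\beta)e(\beta_1,\beta_2)\otimes_{RC^{\dagger}_{\beta_1}\otimes RC^{\dagger}_{\beta_2}} e^{\dagger}_{12}(M_1\otimes M_2),\]
where \(e^{\dagger}_{12}=e^{\dagger}(\beta_1)\otimes e^{\dagger}(\beta_2)\). On the other side,
\[F_{\beta}(M_1\circ M_2)=e^{\dagger}(\beta)RC_{\beta}e(\beta_1,\beta_2)\otimes_{RC_{\beta_1}\otimes RC_{\beta_2}}(M_1\otimes M_2).\]

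Next, apply Example \ref{ex:full even idempotent} to the superalgebra \(B=RC_{\beta_1}\otimes RC_{\beta_2}\) and the full even idempotent \(e^{\dagger}_{12}\). It is full because \(RC_{\beta_i}e^{\dagger}(\beta_i)RC_{\beta_i}=RC_{\beta_i}\) for each \(i\). Since \(M_1\otimes M_2\simeq Be^{\dagger}_{12}\otimes_{e^{\dagger}_{12}Be^{\dagger}_{12}} e^{\dagger}_{12}(M_1\otimes M_2)\), associativity of tensor products gives
\[F_{\beta}(M_1\circ M_2)\simeq \big(e^{\dagger}(\beta)RC_{\beta}e(\beta_1,\beta_2)\otimes_B Be^{\dagger}_{12}\big)\otimes_{e^{\dagger}_{12}Be^{\dagger}_{12}} e^{\dagger}_{12}(M_1\otimes M_2).\]
The inner factor is \(e^{\dagger}(\beta)RC_{\beta}e(\beta_1,\beta_2)e^{\dagger}_{12} = e^{\dagger}(\beta)RC_{\beta}e^{\dagger}(\beta)e(\beta_1,\beta_2)\), because the idempotents \(e(\nu)\) commute. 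So both sides coincide, via the identification \(e^{\dagger}_{12}Be^{\dagger}_{12}=RC^{\dagger}_{\beta_1}\otimes RC^{\dagger}_{\beta_2}\simeq R_{\beta_1}\otimes R_{\beta_2}\).

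Every map used is even and preserves the \(\mathbb{Z}\)-grading, and each step is natural in \(M_1,M_2\). Associativity with the unit \(F_0\) is checked in the same way with three factors, which gives the monoidal structure on \(F\).

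The step I expect to be the main obstacle is the compatibility claim: that \(R_{\beta_1}\otimes R_{\beta_2}\to R_{\beta}\) matches \(e^{\dagger}_{12}Be^{\dagger}_{12}\to e^{\dagger}(\beta)RC_{\beta}e^{\dagger}(\beta)\) under the isomorphisms, and in particular that the sign in the super tensor product is harmless. It is harmless because \(RC^{\dagger}\) is concentrated in even parity, as \(I_{\mathsf{odd}}=\varnothing\) by Lemma \ref{lem:symmetric quiver Hecke superalgebra is quiver Hecke algebra} and \(e^{\dagger}\c_ie^{\dagger}=0\). Granting this, the identity \(e(\beta_1,\beta_2)e^{\dagger}_{12}=e^{\dagger}(\beta)e(\beta_1,\beta_2)\) holds directly from the definitions.
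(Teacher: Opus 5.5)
Your proposal is correct and follows the same idea as the paper. Both write $F_{\beta}(M_1\circ M_2)$ as $e^{\dagger}(\beta)RC_{\beta}e(\beta_1,\beta_2)\otimes_{RC_{\beta_1}\otimes RC_{\beta_2}}(M_1\otimes M_2)$ and move the idempotent $e^{\dagger}$ through the tensor product until the expression becomes $R_{\beta}e(\beta_1,\beta_2)\otimes_{R_{\beta_1}\otimes R_{\beta_2}}\bigl(F_{\beta_1}(M_1)\otimes F_{\beta_2}(M_2)\bigr)$. The paper does this in a terse chain of equalities, writing $e^{\dagger}(\beta)RC_{\beta}$ as $R_{\beta}e^{\dagger}(\beta)\mathcal{C}_{n_1+n_2}$ and absorbing the Clifford factor into $M_1\otimes M_2$. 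Your argument instead uses the full even idempotent $e^{\dagger}(\beta_1)\otimes e^{\dagger}(\beta_2)$ in $RC_{\beta_1}\otimes RC_{\beta_2}$, associativity of $\otimes$, and the parity check on $RC^{\dagger}$, which makes the paper's implicit change of base ring from $RC_{\beta_1}\otimes RC_{\beta_2}$ to $R_{\beta_1}\otimes R_{\beta_2}$ rigorous.
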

\begin{proof}
   Setting \(|\beta_i| = n_i\) (\(i=1,2\)),
   \begin{align*}
      F_{\beta_1 + \beta_2}(M_1 \circ M_2) &= e^{\dagger}(\beta_1 + \beta_2) RC_{\beta_1 + \beta_2}e(\beta_1,\beta_2) \otimes_{RC_{\beta_1} \otimes RC_{\beta_2}} M_1 \otimes M_2 \\
      &= R_{\beta_1 + \beta_2}. e^{\dagger}(\beta_1 + \beta_2).\mathcal{C}_{n_1 + n_2} .e(\beta_1,\beta_2) \otimes M_1 \otimes M_2 \\
      &= R_{\beta_1 + \beta_2}. e^{\dagger}(\beta_1 + \beta_2).e(\beta_1,\beta_2) \otimes M_1 \otimes M_2 \\
      &= R_{\beta_1 + \beta_2}. e^{\dagger}(\beta_1,\beta_2)\otimes M_1 \otimes M_2 \\
      &= F_{\beta_1}(M_1) \circ F_{\beta_2}(M_2).
   \end{align*}
\end{proof}

We adopt this identification without further comment.
The following lemma is an immediate consequence.

\begin{lem}\label{lem:compatibility of Morita and R-matrix}
   For \(m_i \in M_i \in RC_{\beta_i} \sgMod\) (\(i=1,2\)),
   \begin{align*}
      &F_{\beta_1 + \beta_2}(R_{M_1,M_2})(e^{\dagger}(\beta_1)m_1 \otimes e^{\dagger}(\beta_2)m_2) \\
      &= (-1)^{\Par(m_1)\Par(m_2)}R_{F_{\beta_1}(M_1),F_{\beta_2}(M_2)}(e^{\dagger}(\beta_1)m_1 \otimes e^{\dagger}(\beta_2)m_2).
   \end{align*}
\end{lem}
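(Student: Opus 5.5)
We unwind both sides of Lemma~\ref{lem:compatibility of Morita and R-matrix} through their definitions by Frobenius reciprocity and compare them on the generating vectors. By Proposition~\ref{prop:hom-tensor duality for quiver Hecke--Clifford superalgebra}, the R-matrix \(R_{M_1,M_2}\) is the unique \(RC_{\beta_1+\beta_2}\)-linear map determined by its restriction to \(M_1\otimes M_2\subset M_1\circ M_2\):
\[R_{M_1,M_2}(1\otimes m_1\otimes m_2)=(-1)^{\Par(m_1)\Par(m_2)}\varphi_{w[n_1,n_2]}\,(m_2\otimes m_1).\]
Likewise \(R_{F(M_1),F(M_2)}\) is determined by \(e^\dagger m_1\otimes e^\dagger m_2\mapsto (-1)^{\Par(e^\dagger m_1)\Par(e^\dagger m_2)}\varphi^{R}_{w[n_1,n_2]}(e^\dagger m_2\otimes e^\dagger m_1)\), where \(\varphi^R\) is the intertwiner of \(R_{\beta_1+\beta_2}\). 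It therefore suffices to do two things: identify the images of these vectors under the identification \(F_{\beta_1+\beta_2}(M_1\circ M_2)\simeq F(M_1)\circ F(M_2)\) of Proposition~\ref{prop:morita equivalence for quiver Hecke--Clifford superalgebra}, and compare the intertwiners.

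First we make the identification explicit. It sends \(e^\dagger(\beta_1+\beta_2)(1\otimes m_1\otimes m_2)=1\otimes e^\dagger(\beta_1)m_1\otimes e^\dagger(\beta_2)m_2\) to the tensor \(e^\dagger m_1\otimes e^\dagger m_2\). This uses \(e^\dagger(\beta_1+\beta_2)e(\beta_1,\beta_2)=e^\dagger(\beta_1)\otimes e^\dagger(\beta_2)\) and the fact that the \(e^\dagger\)'s are even, so no Koszul sign arises in moving them past \(m_1\). Since \(\Par(e^\dagger m_i)=\Par(m_i)\) for homogeneous \(m_i\), we may take homogeneous elements throughout. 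Next we compare intertwiners. Since \(\varphi_w\) commutes with \(e(\nu)\) up to \(e(w\nu)\) by Lemma~\ref{lem:property of intertwiner}(iv), and all idempotents \(e(\nu^\dagger)\) have \(\pi_2=0\), the relevant case of Definition~\ref{dfn:intertwiner} has \(\pi_2(\nu_a)=0\). There, \(\varphi_a e(\nu^\dagger)\) is \(\sigma_a e(\nu^\dagger)\) or \((\sigma_ax_a-x_a\sigma_a)e(\nu^\dagger)\), because the case \(\nu_a=c(\nu_{a+1})\) cannot occur between two \(\dagger\)-indices and \(J^c=\varnothing\) by Lemma~\ref{lem:symmetric quiver Hecke superalgebra is quiver Hecke algebra}. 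These are exactly the images of the KKK intertwiners of \(R_\beta\) under the isomorphism of Theorem~\ref{thm:super morita equivariant for quiver Hecke--Clifford superalgebra}. Hence \(e^\dagger\varphi_{w}e^\dagger\) corresponds to \(\varphi^R_w\).

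Combining these, we get \(F(R_{M_1,M_2})(e^\dagger m_1\otimes e^\dagger m_2)=(-1)^{\Par(m_1)\Par(m_2)}\varphi^R_{w}(e^\dagger m_2\otimes e^\dagger m_1)\). The remaining step, and the one I expect to be the main obstacle, is the sign bookkeeping. This means checking which sign convention for the braiding of \(F(M_2)\otimes F(M_1)\) is built into \(R_{F(M_1),F(M_2)}\) when \(R_\beta\) is viewed through the category \(R_\beta\sgMod\) of pairs \(M_{\bar0}\oplus M_{\bar1}\). If in that category \(R_{F(M_1),F(M_2)}\) is defined via the non-super (plain) flip on each parity component, then the two sides differ precisely by \((-1)^{\Par(m_1)\Par(m_2)}\), which is the asserted formula. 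I would verify this by writing both maps componentwise on \(M_{i,\bar p}\) and tracking the Koszul sign from \(T_{M_1,M_2}\). Finally, both sides are \(R_{\beta_1+\beta_2}\)-linear (up to the parity twist \(f\mapsto f^\dagger\) of the earlier remark) and agree on generators, so they agree everywhere.
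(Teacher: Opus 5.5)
This is the paper's argument: $R_{M_1,M_2}$ is $\varphi_{w[n_2,n_1]}$ (note $w[n_2,n_1]$, not $w[n_1,n_2]$) applied after the Koszul flip $T$. The even idempotent $e^{\dagger}$ passes through unchanged, $T$ contributes $(-1)^{\Par(m_1)\Par(m_2)}$, and $e^{\dagger}(\beta_1,\beta_2)\varphi_{w[n_2,n_1]}(e^{\dagger}(\beta_2)m_2 \otimes e^{\dagger}(\beta_1)m_1)$ is then read off as $R_{F_{\beta_1}(M_1),F_{\beta_2}(M_2)}(e^{\dagger}(\beta_1)m_1 \otimes e^{\dagger}(\beta_2)m_2)$. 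Resolve your hedge in favour of the plain flip (KKK's R-matrix for the purely even algebra $R_{\beta}$, which is what the paper's last equality uses) and delete the Koszul sign you wrote into $R_{F_{\beta_1}(M_1),F_{\beta_2}(M_2)}$ in your first paragraph, since with that sign the two signs cancel and the stated identity would fail for odd $m_1, m_2$ with nonzero image.
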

\begin{proof}
   Setting \(|\beta_i| = n_i\) (\(i=1,2\)),
   \begin{align*}
      &F_{\beta_1 + \beta_2}(R_{M_1,M_2})(m_1 \otimes m_2) \\
      &= e^{\dagger}(\beta_1 + \beta_2)\varphi_{w[n_2,n_1]}.- \circ T_{F_{\beta_1}(M_1),F_{\beta_2}(M_2)} \circ e^{\dagger}(\beta_1 + \beta_2).(m_1 \otimes m_2) \\
      &= e^{\dagger}(\beta_1 + \beta_2)\varphi_{w[n_2,n_1]}.- \circ T_{F_{\beta_1}(M_1),F_{\beta_2}(M_2)}(e^{\dagger}(\beta_1)m_1 \otimes e^{\dagger}(\beta_2)m_2) \\
      &= (-1)^{\Par(m_1)\Par(m_2)}e^{\dagger}(\beta_1 + \beta_2)\varphi_{w[n_2,n_1]}(e^{\dagger}(\beta_2)m_2 \otimes e^{\dagger}(\beta_1)m_1) \\
      &= (-1)^{\Par(m_1)\Par(m_2)}e^{\dagger}(\beta_1,\beta_2)\varphi_{w[n_2,n_1]}(e^{\dagger}(\beta_2)m_2 \otimes e^{\dagger}(\beta_1)m_1) \\
      &= (-1)^{\Par(m_1)\Par(m_2)}R_{F_{\beta_1}(M_1),F_{\beta_2}(M_2)}(e^{\dagger}(\beta_1)m_1 \otimes e^{\dagger}(\beta_2)m_2),
   \end{align*}
   which completes the proof.
\end{proof}

\begin{lem}\label{lem:compatibility of Morita and dual}
   For \(M \in RC_{\beta} \sgmod \), there is an isomorphism
   \[F_{\beta}(M^*) \simeq F_{\beta}(M)^*.\]
\end{lem}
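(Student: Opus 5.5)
The most direct route is to identify \(F_{\beta}(M^*)\) and \(F_{\beta}(M)^*\) as the same subspace of \(M^*\), namely the part of \(M^*\) that only sees \(e^{\dagger}(\beta)M\). As super vector spaces we have \(M = e^{\dagger}(\beta)M \oplus (1-e^{\dagger}(\beta))M\), and this decomposition is compatible with the \(\mathbb{Z}\)-grading. Hence the graded dual splits as
\[M^* \simeq (e^{\dagger}(\beta)M)^* \oplus ((1-e^{\dagger}(\beta))M)^*.\]
Each summand is viewed inside \(M^*\) as the functionals vanishing on the complementary summand.

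First I compute \(e^{\dagger}(\beta).M^*\). The left action on \(M^*\) is the right action \((f.a)(m) = f(a.m)\) twisted by the antiautomorphism \(\psi\). Since \(\psi(e(\nu)) = e(\nu)\) and \(e(\nu)\) is even, no sign appears, and \(e(\nu).f = f \circ e(\nu)\). Therefore \(e^{\dagger}(\beta).M^* = \{ f \circ e^{\dagger}(\beta) \mid f \in M^*\}\). This is exactly the set of functionals supported on \(e^{\dagger}(\beta)M\), so restriction gives an isomorphism of \((\mathbb{Z}/2\mathbb{Z},\mathbb{Z})\)-graded vector spaces
\[\rho \colon F_{\beta}(M^*) = e^{\dagger}(\beta)M^* \to (e^{\dagger}(\beta)M)^* = F_{\beta}(M)^*, \quad f \mapsto f|_{e^{\dagger}(\beta)M}.\]
This map is bijective because \(M\) is finite-dimensional, or more generally because each graded piece \(M_i\) is finite-dimensional.

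Next I check that \(\rho\) is \(R_{\beta}\)-linear, where \(R_{\beta}\) is identified with \(RC^{\dagger}_{\beta}\) via Theorem \ref{thm:super morita equivariant for quiver Hecke--Clifford superalgebra}. It suffices to test on the generators \(e(\nu^{\dagger})\), \(x_p e(\nu^{\dagger})\) and \(\sigma_a e(\nu^{\dagger})\) of \(RC^{\dagger}_{\beta}\). All of these are even, and \(\psi\) fixes \(e(\nu)\), \(x_p\) and \(\sigma_a\). The action of \(a \in RC^{\dagger}_{\beta}\) on \(M^*\) is \(f \mapsto f\circ \psi(a)\). The action on \((e^{\dagger}(\beta)M)^*\), which is the dual of an \(R_{\beta}\)-module, is \(g \mapsto g \circ \psi_R(a)\), where \(\psi_R\) is the standard anti-involution of \(R_{\beta}\) fixing \(e(\nu)\), \(x_p\) and \(\sigma_a\). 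Under the isomorphism \(R_{\beta}\simeq RC^{\dagger}_{\beta}\), the map \(\psi_R\) corresponds to \(\psi\) restricted to \(RC^{\dagger}_{\beta}\). This holds because \(\psi(e^{\dagger}(\beta)) = e^{\dagger}(\beta)\), and because both anti-involutions fix the same generators and are anti-multiplicative with no Koszul signs on even elements. Consequently \(\rho(a.f) = a.\rho(f)\).

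The main point needing care is this last compatibility. One must confirm that the factor \(-\sqrt{-1}\) on \(\c_i\) in \(\psi\) never enters, and that \(\psi\) really restricts to an anti-automorphism of \(RC^{\dagger}_{\beta}\) agreeing with \(\psi_R\). The first follows because \(RC^{\dagger}_{\beta}\) is generated by elements not involving \(\c_i\), using \(e^{\dagger}(\beta)\c_i e^{\dagger}(\beta) = 0\) together with the PBW basis of Remark \ref{rem:PBW for quiver Hecke--Clifford}. The second is then checked on generators. Finally, \(\rho\) is natural in \(M\), which gives the isomorphism \(F_{\beta}(M^*) \simeq F_{\beta}(M)^*\).
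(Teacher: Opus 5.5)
Your proposal is correct and is essentially the paper's argument. The paper's proof is the one-line identification \(e^{\dagger}(\beta).\Hom_{\Bbbk}(M,\Bbbk) = \Hom_{\Bbbk}(e^{\dagger}(\beta)M,\Bbbk)\). You make that identification explicit through the restriction map, and you add the check, which the paper leaves as a ``direct computation,'' that the twisted actions of \(RC^{\dagger}_{\beta} \simeq R_{\beta}\) agree because this subalgebra is generated by even, \(\psi\)-fixed elements.

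One minor remark: bijectivity of the restriction map does not need finite-dimensionality, since \(g \mapsto g \circ e^{\dagger}(\beta)\) is an explicit inverse.
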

\begin{proof}
   A direct computation yields
   \[e^{\dagger}(\beta).M^* = e^{\dagger}(\beta).\Hom_{\Bbbk}(M ,\Bbbk) = \Hom_{\Bbbk}(e^{\dagger}(\beta).M,\Bbbk) = F_{\beta}(M)^*.\qedhere\]
\end{proof}

\begin{dfn}
   For \(M \in RC_{\beta} \sgMod \), set \(M_{z} = \Bbbk[z] \otimes_{\Bbbk} M\) and equip it with the \(RC_{\beta}\)-action
   \begin{gather*}
      e(\nu).(f \otimes u) = f \otimes e(\nu).u, \\
      x_{k}e(\nu).(f \otimes u) = (-1)^{\pi_2(\nu_k)}zf \otimes e(\nu).u + f \otimes x_{k}e(\nu).u, \\
      c_{k}.(f(z) \otimes u) = f \otimes c_{k}.u,~\sigma_{a}.(f \otimes u) = f \otimes \sigma_{a}.u.
   \end{gather*}
   The module \(M_{z} \in RC_{\beta} \sgMod \) is called the \textit{affinization} of \(M\).
\end{dfn}
This provides a concrete example of the strong affinization introduced in Section \ref{subsec:affinization}.
Clearly,
   \[F_{\beta}(M_z) \simeq F_{\beta}(M)_z.\]
   For \(u \in M\), write \(u_z = 1 \otimes u \in M_z\).

\begin{dfn}\label{dfn:renormalized R-matrix}
   For \(M_i \in RC_{\beta_i} \sgmod \) (\(i=1,2\)), define \(r, s \in \mathbb{Z}_{\geq 0}\) by
   \begin{align*}
      r &= \max\{a \in \mathbb{Z}_{\geq 0} \mid \Im R_{(M_1)_z,M_2} \subset z^{a} M_2 \circ (M_1)_z \}, \\
      s &= \max \{b \in \mathbb{Z}_{\geq 0} \mid \Im R_{M_1, (M_2)_{z'}} \subset z'^{b} (M_2)_{z'} \circ M_1\}.
   \end{align*}

   The morphisms of \(RC_{\beta_1 + \beta_2}\)-modules \(r_{M_1,M_2}, r'_{M_1,M_2}\) are defined by
   \begin{align*}
      r_{M_1,M_2} &= z^{-r}R_{(M_1)_z,M_2}|_{z = 0} \colon M_1 \circ M_2 \to  M_2 \circ M_1,\\
      r'_{M_1,M_2} &= z'^{-s}R_{M_1, (M_2)_{z'}}|_{z' = 0} \colon M_1 \circ M_2 \to  M_2 \circ M_1,
   \end{align*}
   and are called the \textit{renormalized R-matrices}.
\end{dfn}
By Lemma \ref{lem:compatibility of Morita and R-matrix} and \cite[Proposition 1.10]{KKK}, non-zero \(M_1\) and \(M_2\) satisfy \(R_{(M_1)_z,M_2} \neq 0\) and \(R_{M_1,(M_2)_{z'}} \neq 0\), so that \(r\) and \(s\) are well defined.
If \(M \neq 0 \neq N\), then \(r_{M,N} \neq 0\).
Moreover, \(r_{M,N} = R_{M,N}\) whenever \(R_{M,N} \neq 0\).
\begin{prop}
   For \(m_i \in M_i \in RC_{\beta_i} \sgMod\) (\(i=1,2\)),
   \begin{align*}
      &F_{\beta_1 + \beta_2}(r_{M_1,M_2})(e^{\dagger}(\beta_1)m_1 \otimes e^{\dagger}(\beta_2)m_2) \\
      &= (-1)^{\Par(m_1)\Par(m_2)}r_{F_{\beta_1}(M_1),F_{\beta_2}(M_2)}(e^{\dagger}(\beta_1)m_1 \otimes e^{\dagger}(\beta_2)m_2), \\
      &F_{\beta_1 + \beta_2}(r'_{M_1,M_2})(e^{\dagger}(\beta_1)m_1 \otimes e^{\dagger}(\beta_2)m_2) \\
      &= (-1)^{\Par(m_1)\Par(m_2)}r'_{F_{\beta_1}(M_1),F_{\beta_2}(M_2)}(e^{\dagger}(\beta_1)m_1 \otimes e^{\dagger}(\beta_2)m_2).
   \end{align*}
\end{prop}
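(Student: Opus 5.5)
The plan is to reduce the statement to Lemma \ref{lem:compatibility of Morita and R-matrix}, applied to the affinized modules, and then to check that the renormalization exponents and the specialization at \(z=0\) are compatible with \(F\). We treat \(r_{M_1,M_2}\); the case of \(r'_{M_1,M_2}\) is identical with \(M_2\) replaced by \((M_2)_{z'}\).

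\textbf{Step 1 (compatibility for the affinizations).} Apply Lemma \ref{lem:compatibility of Morita and R-matrix} to the pair \((M_1)_z\) and \(M_2\). Using \(F_{\beta_1}((M_1)_z) \simeq F_{\beta_1}(M_1)_z\), we obtain for \(m_1 \in M_1\) and \(m_2 \in M_2\):
\[F_{\beta_1+\beta_2}(R_{(M_1)_z,M_2})(e^{\dagger}(\beta_1)(m_1)_z \otimes e^{\dagger}(\beta_2)m_2) = (-1)^{\Par(m_1)\Par(m_2)} R_{F_{\beta_1}(M_1)_z,F_{\beta_2}(M_2)}(e^{\dagger}(\beta_1)(m_1)_z \otimes e^{\dagger}(\beta_2)m_2).\]
Multiplication by \(z\) is even. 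It commutes with \(e^{\dagger}\) and with both R-matrices, which are \(\Bbbk[z]\)-linear. The identity therefore extends \(\Bbbk[z]\)-linearly to elements \(f(z)\otimes m_1\).

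\textbf{Step 2 (equality of the exponents \(r\)).} We need the maximal \(a\) with \(\Im R_{(M_1)_z,M_2} \subset z^a M_2 \circ (M_1)_z\) to coincide with the corresponding exponent for \(F(M_1)_z\) and \(F(M_2)\).

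This is the main point of the proof. By Proposition \ref{prop:morita equivalence for quiver Hecke--Clifford superalgebra}, \(F\) identifies \(e^{\dagger}(M_2 \circ (M_1)_z)\) with \(F(M_2)\circ F(M_1)_z\) as \(\Bbbk[z]\)-modules. Since \(F\) is an equivalence (given by a full idempotent), a submodule \(X \subset N\) satisfies \(X \subset z^aN\) if and only if \(e^{\dagger}X \subset z^a e^{\dagger}N\). Indeed, \(X = RC\,e^{\dagger}X\) and \(z^aN\) is an \(RC\)-submodule.

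The image of \(F(R_{(M_1)_z,M_2})\) is \(e^{\dagger}\Im R_{(M_1)_z,M_2}\). This image is generated over \(R_{\beta_1+\beta_2}\) by the vectors \(e^{\dagger}(\beta_1)(m_1)_z\otimes e^{\dagger}(\beta_2)m_2\), because \(M_1\circ M_2\) is generated by \(M_1\otimes M_2\) and \(e^{\dagger}(\beta_1+\beta_2)\mathcal{C}_{n}e(\beta_1,\beta_2)\) reduces to \(e^{\dagger}(\beta_1,\beta_2)\), as in the proof of Proposition \ref{prop:morita equivalence for quiver Hecke--Clifford superalgebra}. By Step 1, up to the signs \(\pm1\), this image equals the image of \(R_{F(M_1)_z,F(M_2)}\). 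Hence the two maximal exponents agree.

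\textbf{Step 3 (specialization at \(z=0\)).} Divide both sides of the identity in Step 1 by \(z^{r}\). This is legitimate because \(M_2\circ(M_1)_z\) is \(z\)-torsion free, being free over \(\Bbbk[z]\) by Remark \ref{rem:convolution product of finite dimensional module is finite dimensional}. Then set \(z=0\). Specialization commutes with \(F\), since \(e^{\dagger}\) is exact and commutes with \(-\otimes_{\Bbbk[z]}\Bbbk[z]/(z)\), and \((m_1)_z\) specializes to \(m_1\). This yields the asserted identity for \(r_{M_1,M_2}\).

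The only real obstacle is Step 2, that the order of vanishing in \(z\) is preserved by \(F\). Everything else is formal bookkeeping of signs inherited from Lemma \ref{lem:compatibility of Morita and R-matrix}.
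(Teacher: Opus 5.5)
Your proposal is correct and follows the paper's proof. Both apply Lemma \ref{lem:compatibility of Morita and R-matrix} to the affinized pair, show that the renormalization exponents agree, and then specialize at \(z=0\) (resp.\ \(z'=0\)). The only difference is in the exponent step. The paper gets \(r \leq r'\) from the lemma and \(r \geq r'\) by a symmetric argument with a quasi-inverse of \(F\). You get both inequalities at once from the criterion \(X \subset z^aN \Leftrightarrow e^{\dagger}X \subset z^a e^{\dagger}N\) for the full idempotent \(e^{\dagger}\), which makes that step more explicit.
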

\begin{proof}
   Let \(r', s' \in \mathbb{Z}_{\geq 0}\) be defined by
   \begin{align*}
      r' &= \max\{a \in \mathbb{Z}_{\geq 0} \mid \Im R_{F_{\beta_1}(M_1)_z,F_{\beta_2}(M_2)} \subset z^{a} F_{\beta_2}(M_2) \circ (F_{\beta_1}(M_1))_z \}, \\
      s' &= \max \{b \in \mathbb{Z}_{\geq 0} \mid \Im R_{F_{\beta_1}(M_1), (F_{\beta_2}(M_2))_{z'}} \subset z'^{b} (F_{\beta_2}(M_2))_{z'} \circ F_{\beta_1}(M_1)\},
   \end{align*}
   so that
   \begin{align*}
      r_{F_{\beta_1}(M_1),F_{\beta_2}(M_2)} &= z^{-r'}R_{(F_{\beta_1}(M_1))_z,F_{\beta_2}(M_2)}|_{z = 0} ,\\
      r'_{F_{\beta_1}(M_1),F_{\beta_2}(M_2)} &= z'^{-s'}R_{F_{\beta_1}(M_1), (F_{\beta_2}(M_2))_{z'}}|_{z' = 0}.
   \end{align*}
   Since \(z^{-r}R_{(M_1)_z,M_2}\) and \(z'^{-s}R_{M_1, (M_2)_{z'}}\) are well defined as morphisms of \(RC_{\beta_1 + \beta_2}\)-modules, Lemma \ref{lem:compatibility of Morita and R-matrix} implies \(r \leq r'\) and \(s \leq s'\).
   A symmetric argument using a quasi-inverse of \(F\) yields \(r \geq r'\) and \(s \geq s'\); hence \(r = r'\) and \(s = s'\).
      Applying Lemma \ref{lem:compatibility of Morita and R-matrix} once more, the claim follows.
\end{proof}

\begin{rem}
   By \cite[Definition 1.11]{KKK}, \(r_{F_{\beta_1}(M_1),F_{\beta_2}(M_2)}\) and \(r'_{F_{\beta_1}(M_1),F_{\beta_2}(M_2)}\) agree up to a scalar multiple. Consequently, \(r_{M_1,M_2}\) and \(r'_{M_1,M_2}\) also agree up to a scalar multiple.
\end{rem}

\begin{rem}[{\cite[(1.20)]{KKK}}]
   As is the case for \(\{R_{M_1,M_2}\}\), the family \(\{r_{M_1,M_2}\}\) satisfies the Yang--Baxter equation.
   For any \(w \in \mathfrak{S}_{n}\) with reduced expression \(w = s_{i_1} \cdots s_{i_{l}}\), the morphism \(r^{w}_{M_1,\ldots,M_l}\) defined inductively below is independent of the choice of reduced expression:
   \begin{align*}
      r^{e}_{M_1,\ldots,M_l} &\coloneqq \id, \\
      r^{w}_{M_1,\ldots,M_l} &\coloneqq r^{ws_{i_l}}_{M_{s_{i_l}(1)},\ldots,M_{s_{i_{l}}(l)}} \cdot (M_1 \circ \cdots \circ M_{i_{l-1}} \circ r_{M_{i_{l}},M_{i_{l+1}}} \circ M_{i_{l+2}} \circ \cdots \circ M_{l}).
   \end{align*}
\end{rem}

By applying the results of Section \ref{sec:quiver hecke--clifford superalgebras} and following the arguments of \cite{KKKO1,KKKO2}, one can establish analogues for quiver Hecke--Clifford superalgebras of the results proved therein for symmetric quiver Hecke algebras.
We omit the proofs and collect some of the results below.

\begin{cor}[cf. {\cite[Corollary 3.9]{KKKO1}}]\label{cor:real simple module of quiver Hecke--Clifford superalgebra}
   Let \(RC_{\beta_i}\) (\(i= 1,2\)) be symmetric quiver Hecke--Clifford superalgebras, and let \(M_i \in RC_{\beta_i} \sgmod\) be simple modules, at least one of which is real simple.
   \begin{enumerate}[(i)]
      \item If \(\hd M \circ N \simeq \soc N \circ M\), then \(M \circ N\) is simple and \(M \circ N \simeq N \circ M\).
      \item \(M \circ N \simeq N \circ M\) if and only if \(M \circ N\) is simple.
   \end{enumerate}
\end{cor}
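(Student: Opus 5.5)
The plan is to transport the statement through the equivalence \(F = \bigoplus_{\beta} F_{\beta}\) of Proposition \ref{prop:morita equivalence for quiver Hecke--Clifford superalgebra} and invoke \cite[Corollary 3.9]{KKKO1} for the symmetric quiver Hecke algebra \(R_{\beta}\). Lemma \ref{lem:symmetric quiver Hecke superalgebra is quiver Hecke algebra} gives \(I_{\mathsf{odd}} = \varnothing\), so \(R_{\beta}\) is an honest symmetric quiver Hecke algebra, and Theorem \ref{thm:super morita equivariant for quiver Hecke--Clifford superalgebra} identifies \(RC_{\beta}\sgmod\) with \(R_{\beta}\sgmod\), whose objects are pairs \(M_{\overline{0}} \oplus M_{\overline{1}}\) of graded \(R_{\beta}\)-modules.

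\emph{Step 1: dictionary.} Since \(F_{\beta}\) is an equivalence of abelian categories, it preserves simplicity, heads, socles, and isomorphism classes. Proposition \ref{prop:morita equivalence for quiver Hecke--Clifford superalgebra} gives \(F(M \circ N) \simeq F(M) \circ F(N)\). Hence ``\(M \circ N\) simple'', ``\(\hd M \circ N \simeq \soc N \circ M\)'' and ``\(M \circ N \simeq N \circ M\)'' are each equivalent to the same statement for \(F(M), F(N)\) in \(R\sgmod\).

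\emph{Step 2: real simplicity.} Realness of \(M\) means that \(M \circ M\) is simple, so by Step 1 it transfers to \(F(M)\).

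\emph{Step 3: reduce to ungraded-super data.} A simple object of \(R_{\beta}\sgmod\) has the form \(L\) or \(\Pi L\) with \(L\) a simple graded \(R_{\beta}\)-module concentrated in a single parity. The convolution in \(R\sgmod\) is the ordinary convolution, with parity given by the sum of parities and with Koszul signs only on morphisms. Therefore simplicity and isomorphism questions reduce, up to the autoequivalence \(\Pi\), which commutes with \(\circ\), to the corresponding questions for ordinary graded \(R_{\beta}\)-modules.

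\emph{Step 4: conclude.} Apply \cite[Corollary 3.9]{KKKO1} to \(L_M, L_N\) to obtain (i) and (ii) there, then transport back through Steps 1 to 3.

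The main obstacle is Step 3: one has to check carefully that heads and socles in the super category correspond exactly to those in the ordinary category, allowing for \(\Pi\)-twists. Concretely, for a simple \(L\) we must decide whether \(\Pi L \simeq L\) can occur. Here it cannot, because odd morphisms in \(R\sgmod\) are parity-swapping matrices of ordinary morphisms. Lemma \ref{lem:compatibility of Morita and R-matrix} guarantees that the R-matrices used in \cite{KKKO1} correspond up to sign. The hypothesis \(\hd M \circ N \simeq \soc N \circ M\) may hold only up to \(\Pi\), and I would handle this by tracking parity degrees, which are additive under \(\circ\).
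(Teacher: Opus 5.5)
Your proposal is correct and follows the route the paper itself indicates. The paper omits the proof, remarking only that such results are deduced from the symmetric quiver Hecke case via the equivalence; you do exactly that, transporting simplicity, realness, heads, socles and isomorphism classes through the monoidal Morita superequivalence $F$ of Proposition~\ref{prop:morita equivalence for quiver Hecke--Clifford superalgebra} (using $I_{\mathsf{odd}}=\varnothing$) and quoting \cite[Corollary 3.9]{KKKO1}. Two minor remarks: $\Pi L\not\simeq L$ holds only for even isomorphisms (in $R_{\beta}\sgmod$ as the paper describes it, the odd identity is an odd isomorphism $L\simeq\Pi L$), which is harmless since $M\circ N$, $N\circ M$ and all their subquotients sit in one and the same parity; and as printed, the hypothesis of (i) holds automatically up to a grading shift, because $\hd(M\circ N)$ and $\soc(N\circ M)$ are both isomorphic to $\Im r_{M,N}$, so (i) presumably intends $\hd(M\circ N)\simeq\soc(M\circ N)$, and it is that corrected version which your transport argument delivers.
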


\begin{dfn}[cf. {\cite[Definition 3.1.1, Definition 3.2.2]{KKKO2}}]
   For non-zero \(M, N \in RC \sgmod\), define \(\Lambda(M,N) \in \mathbb{Z}\) to be the homogeneous degree of \(r_{M,N}\).
   Further define \(\mathfrak{d}(M,N) \in \mathbb{Z}\) by
   \[\mathfrak{d}(M,N) = \frac{1}{2}(\Lambda(M,N) + \Lambda(N,M)).\]
\end{dfn}

\begin{prop}[cf. {\cite[Lemma 3.2.3]{KKKO2}}]
   Let \(M, N \in RC \sgmod\) be simple modules, at least one of which is real.
   The following are equivalent:
   \begin{enumerate}[(i)]
      \item \(\mathfrak{d}(M,N) = 0\),
      \item \(r_{M,N}\) and \(r_{N,M}\) are inverse to each other up to a scalar,
      \item \(M \circ N \simeq N \circ M\) in \(RC \smod\),
      \item \(\hd(M \circ N) \simeq \hd(N \circ M)\) in \(RC \smod\),
      \item \(M \circ N\) is simple.
   \end{enumerate}
\end{prop}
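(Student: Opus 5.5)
The plan is to transport the whole statement along the monoidal equivalence \(F = \bigoplus_{\beta} F_{\beta}\) of Proposition \ref{prop:morita equivalence for quiver Hecke--Clifford superalgebra}. Once each condition (i)--(v) is shown to be invariant under \(F\), the proposition reduces to the corresponding equivalence for the symmetric quiver Hecke algebra \(R_{\beta}\), which is \cite[Lemma 3.2.3]{KKKO2} (suitably read in the super setting \(R_{\beta}\sgmod\) described before the proposition).

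\textbf{Step 1: simples and realness are preserved.} Since \(F_\beta\) is an equivalence of abelian categories \(RC_\beta\sgmod \simeq R_\beta\sgmod\), it preserves simplicity, heads and socles. Because \(F\) is monoidal, \(F(M\circ M)\simeq F(M)\circ F(M)\). Hence \(M\) is real simple if and only if \(F(M)\) is. It follows that conditions (iii), (iv) and (v) are equivalent to the same conditions for \(F(M), F(N)\).

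\textbf{Step 2: (i) and (ii) are preserved.} By the preceding proposition comparing \(F(r_{M_1,M_2})\) with \(r_{F(M_1),F(M_2)}\), these two maps differ only by the even sign twist \((-1)^{\Par(m_1)\Par(m_2)}\) on homogeneous generators. Since \(F\) preserves the \(\mathbb{Z}\)-grading, the homogeneous degree \(\Lambda(M,N)\) equals \(\Lambda(F(M),F(N))\), and therefore \(\mathfrak d(M,N)=\mathfrak d(F(M),F(N))\). The same comparison gives that \(r_{N,M}\circ r_{M,N}\) is a scalar exactly when \(r_{F(N),F(M)}\circ r_{F(M),F(N)}\) is a scalar, up to the parity twist.

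\textbf{Main obstacle.} The delicate point is making the sign twist precise. The sign \((-1)^{\Par(m_1)\Par(m_2)}\) is, up to the natural isomorphism \(F(M_1\circ M_2)\simeq F(M_1)\circ F(M_2)\), the composite of the identification with the even automorphism \(\theta\) acting by \((-1)^{\Par(m_1)\Par(m_2)}\). One must check that \(\theta\) is a module automorphism, that composing \(r_{N,M}r_{M,N}\) cancels the twists (\(\theta^2=1\)), and that scalar endomorphisms are preserved.

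\textbf{Step 3: reduction to the nonsuper case.} Finally, a simple object of \(R_\beta\sgmod\) has the form \(L\) or \(\Pi L\) with \(L\) a simple graded \(R_\beta\)-module placed in one parity. Convolution and R-matrices respect this decomposition componentwise. Therefore the KKKO2 equivalence for ungraded-parity modules applies verbatim, and this completes the argument.
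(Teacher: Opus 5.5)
Your proposal is correct and takes the route the paper intends: the paper omits this proof, but it indicates that such results are deduced from the symmetric quiver Hecke algebra case by transporting along the monoidal Morita superequivalence \(F\), using its compatibility with convolution, R-matrices and renormalized R-matrices. Your observation that simple objects of \(R_\beta\sgmod\) lie in a single parity is the key point, since it makes the sign twist \((-1)^{\Par(m_1)\Par(m_2)}\) a global \(\pm 1\), so conditions (i)--(v) reduce to \cite[Lemma 3.2.3]{KKKO2}.
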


\section{Schur--Weyl duality via quiver Hecke--Clifford superalgebra}\label{sec:schur--weyl duality via quiver hecke--clifford superalgebra}

We apply the results of Section \ref{sec:quiver hecke--clifford superalgebras} to construct a Schur--Weyl duality via quiver Hecke--Clifford superalgebras and to study its properties.

The following notation and conventions will be used throughout this section.
They follow the notation of \cite{KL4}.

\begin{enumerate}[$\bullet$]
   \item \(\Bbbk = \overline{\mathbb{Q}(q)}\),
   \item \(\epsilon = (\epsilon_1,\ldots,\epsilon_n) \in (\mathbb{Z}/2\mathbb{Z})^n~(n \geq 4)\),
   \item \(M = |\{i \mid \varepsilon_i = 0 \}|,~N = |\{i \mid \varepsilon_i = 1\}|\),
   \item \(\epsilon_{M|N} = (\underbrace{\overline{0},\ldots,\overline{0}}_{M},\underbrace{\overline{1},\ldots,\overline{1}}_{N})\),
   \item \(\mathbb{I} = \{1 < 2 < \cdots < n\}\),
   \item \(\mathbb{I}_{\overline{j}} = \{i \in \mathbb{I} \mid \epsilon_i = j\}~(j = 0,1)\),
   \item \(P = \bigoplus_{i \in \mathbb{I}}\mathbb{Z}\delta_i\),
   \item \(P_{\geq 0} = \bigoplus_{i \in \mathbb{I}}\mathbb{Z}_{\geq 0}\delta_i \subset P\),
   \item \((-|-) \colon\) the bilinear form on \(P\) defined by \((\delta_i|\delta_j) = \delta_{ij}\),
   \item \(P^{\vee} = \Hom_{\mathbb{Z}}(P,\mathbb{Z}) = \bigoplus_{i \in \mathbb{I}}\mathbb{Z}\delta_{i}^{\vee},~\langle \delta_i,\delta_{j}^{\vee}\rangle = \delta_{ij}\),
   \item \(I = \{0,1,\ldots,n-1\}\), where \(i \in I\) is taken modulo \(n\),
   \item \(\alpha_i = \epsilon_i - \epsilon_{i+1} \in P~(i \in I)\), with \(\alpha_{n-1} = \epsilon_{n-1} - \epsilon_{0}\),
   \item \(\alpha_{i}^{\vee} = \delta_{i}^{\vee} - (-1)^{\epsilon_i + \epsilon_{i+1}}\delta_{i+1}^{\vee} \in P^{\vee}~(i \in I)\),
   \item \(I_{\mathsf{even}} = \{i \in I \mid (\alpha_i|\alpha_i) = \pm 2\},~I_{\mathsf{odd}} = \{i \in I \mid (\alpha_i|\alpha_i) = 0\}\),
   \item \(q_i = (-1)^{\epsilon_i}q^{(-1)^{\epsilon_i}}~(i \in I)\),
   \item \(\mathbf{q}(-,-) \colon P \times P \to \Bbbk^{\times},~\mathbf{q}(\mu,\nu) = \Pi_{i \in \mathbb{I}}q_{i}^{\langle \mu,\delta_i^{\vee} \rangle \langle \nu,\delta_i^{\vee} \rangle}\).
\end{enumerate}

\subsection{Odd involutions on the affinization of vector representations}\label{subsec:Odd involutions on the affinization of vector representations}

In this section we review two types of affine queer Lie superalgebras and the vector representations of their quantum analogues, following \cite{GS,CG}.

\subsubsection{\(\qq_n^{(1)},\qq_n^{(2)}\)}

Let \(n \geq 1\).
For a super vector space \(V = \Bbbk^{n|n} \in \Bbbk \sVect\), \(\End(V)\) forms a Lie superalgebra under the bracket
\[[X,Y] = XY - (-1)^{\Par(X)\Par(Y)}YX,\]
denoted \(\gll_{n|n}\).
The \textit{queer Lie superalgebra} \(\qq_n\) is the Lie subsuperalgebra of \(\gll_{n|n}\) defined by
\[\qq_n = \{X \in \gll_{n|n} \mid [X,J] = 0\},\]
where
\[J = \sqrt{-1}\left[ \begin{array}{c|c}
                                          0 & I_n \\
                                          \hline
                                    - I_n & 0
                                       \end{array}
                                    \right].\]
Set \(\sqq_n = [\qq_n,\qq_n]\).
The loop algebra \(\mathcal{L}(\sqq_n)\) of \(\sqq_n\) is given by
\[\mathcal{L}(\sqq_n) = \sqq_n \otimes \Bbbk[t^{\pm 1}].\]
The \textit{untwisted affine queer Lie superalgebra} \(\qq_n^{(1)}\) is the extension of \(\mathcal{L}(\sqq_n)\) by a derivation:
\[\qq_n^{(1)} = \mathcal{L}(\sqq_n)\oplus \Bbbk.D,\]
where
\[[D,X \otimes t^k] = k X \otimes t^k.\]

Define an automorphism \(\varepsilon\) on \(\sqq_n\) by \(\varepsilon|_{({\sqq_n})_{i}} = (-1)^i \id_{({\sqq_n})_{i}}\), and extend it to \(\qq_n^{(1)}\) by setting
\[\varepsilon(t) = -1, \quad \varepsilon(D) = D.\]
The \textit{twisted affine queer Lie superalgebra} \(\qq_n^{(2)}\) is the quotient of the sub-Lie superalgebra
\[(\qq_n^{(1)})^{\varepsilon} = (\mathcal{L}(\sqq_n))^{\varepsilon} \oplus \Bbbk.D\]
by the ideal \(\sum_{i \neq 0}\Bbbk \otimes t^{2i}\).
Note that \(\qq_n\) and \(\qq_n^{(1)}\) are not Kac--Moody superalgebras, whereas \(\qq_n^{(2)}\) is a non-symmetrizable Kac--Moody superalgebra.

For \(V\), set \(V_z \coloneqq V \otimes \Bbbk[z^{\pm 1}]\), and let \(\qq_n^{(1)}\) and \((\qq_n^{(1)})^{\varepsilon}\) act on \(V_z\) by
\[X \otimes t^k.v \otimes t^l = (X v) \otimes t^{k+l}, \quad D.v \otimes t^l = l v \otimes t^l.\]
Then \(J \otimes 1 \in \End(V_z)\) is an odd involution and supercommutes with the actions of \(\qq_n^{(1)}\) and \((\qq_n^{(1)})^{\varepsilon}\).

\begin{rem}
   Since the ideal \(\sum_{i \neq 0}\Bbbk \otimes t^{2i}\) does not act trivially on \(V_z\), \(V_z\) cannot be regarded as a \(\qq_n^{(2)}\)-module.
\end{rem}

\subsubsection{\(U_q(\widehat{\qq}_n^{\mathsf{tw}})\)}

Fix the standard basis \(e_{-n},\ldots,e_{-1},e_{1},\ldots,e_{n}\) of \(V = \Bbbk^{n|n}\).
The parity assignment is
\[\Par(e_i) = \begin{cases}
   \overline{0} & \text{if $i > 0$,}  \\
   \overline{1} & \text{if $i < 0$.}
\end{cases}\]
With respect to this basis, \(J\) takes the form
\[J = \sum_{i = 1}^n (E_{i,-i} - E_{-i,i}),\]
where \(E_{ij}\) denote the matrix units.

The \textit{Olshanski R-matrix} \(S \in \End(V)^{\otimes 2}\) is defined by
\begin{align*}
   S &= \sum_{i,j = -n}^n q^{(\delta_{i,j} + \delta_{-i,j})(1-2\Par(e_j))}E_{ii} \otimes E_{jj} \\
   &+ (q-q^{-1})\sum_{i > j}(-1)^{\Par(j)} (E_{ij} + E_{-i,-j})\otimes E_{ji}.
\end{align*}
Set
\[S(x,y) = S + \frac{(q-q^{-1})P}{x^{-1}y-1} + \frac{(q-q^{-1})PJ_{1}J_{2}}{xy-1} \in \End(V)^{\otimes 2} \otimes \Bbbk(x,y),\]
where \(P = \sum_{i,j = -n}^{n}(-1)^{\Par(e_j)}E_{ij} \otimes E_{ji}\) denotes the superpermutation operator.
\(S(x,y)\) satisfies the Yang--Baxter equation
\[S_{12}(x,y) S_{13}(x,z) S_{23}(y,z) = S_{23}(y,z) S_{13}(x,z) S_{12}(x,y).\]

We define \(U_q(\widehat{\qq}_n^{\mathsf{tw}})\) as the superalgebra generated by \(\{t_{ij}^{(r)} \mid \cdots\}\) and an even central element \(C\), subject to the following relations:
\[t_{ij}^{(0)} = 0 ~(i > j), \quad t_{ii}^{(0)}t_{-i,-i}^{(0)} = 1 = t_{-i,-i}^{(0)}t_{i,i}^{(0)},\]
\[T_{12}(w)T_{13}(z)S_{23}(C^{-1}w,C^{-1}z) = S_{23}(Cw,Cz)T_{13}(z)T_{12}(w),\]
where
\[T(z) = \sum_{i,j} t_{ij}(z) \otimes E_{ij} \in U_q(\widehat{\qq}_n^{\mathsf{tw}})[[z^{-1}]] \otimes \End(V), \quad t_{ij}(z) = \sum_{r \geq 0} t_{ij}^{(r)} z^{-r}.\]
The quotient \(U_q(\mathcal{L}_{\mathsf{tw}}\qq_n) = U_q(\widehat{\qq}_n^{\mathsf{tw}})/\Bbbk.C\) is called the \textit{twisted quantum loop superalgebra}.

\begin{prop}[Proof of {\cite[Theorem 5.1]{CG}}]
   \(V_z\) admits a \(U_q(\mathcal{L}_{\mathsf{tw}}\qq_n)\)-module structure via the action
   \[\rho(T(w)) = S(z,w) \in \End(V_z) \otimes \End(V)[[w^{-1}]] \otimes \End(V).\]
   Explicitly,
   \[\rho(t_{ij}^{(r)}) = \begin{cases}
      S_{ij} & (r = 0), \\
      (-1)^{\Par(e_i)} \epsilon (E_{ji}w^{-r} + E_{-j,-i}w^{r}) & (r > 0). \\
      \end{cases}\]
\end{prop}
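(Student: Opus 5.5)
We need to show that the assignment \(\rho(T(w)) = S(z,w)\) defines a \(U_q(\mathcal{L}_{\mathsf{tw}}\qq_n)\)-module structure on \(V_z\). The plan is the standard RTT argument. Since \(C\) is sent to \(1\) in the quotient \(U_q(\mathcal{L}_{\mathsf{tw}}\qq_n)\), the defining relation becomes
\[T_{12}(w)T_{13}(w')S_{23}(w,w') = S_{23}(w,w')T_{13}(w')T_{12}(w).\]
Here the first tensor factor is \(U_q(\mathcal{L}_{\mathsf{tw}}\qq_n)\) and the last two are copies of \(\End(V)\).

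First we check that the prescription is legitimate. To do so we rewrite \(S(z,w)\) as an element of \(\End(V_z) \otimes \End(V)[[w^{-1}]]\), which requires expanding the denominators:
\[\frac{1}{z^{-1}w-1} = \sum_{r \geq 1} z^{r}w^{-r}, \qquad \frac{1}{zw-1} = \sum_{r\geq 1} z^{-r}w^{-r}.\]
Both are expansions in \(w^{-1}\). Reading off the coefficient of \(w^{-r}\), with \(z^{\pm r}\) acting on \(V_z\) by multiplication and the \(P\) and \(PJ_1J_2\) terms contributing matrix units \(E_{ji}\) and \(E_{-j,-i}\), recovers the explicit formula for \(\rho(t_{ij}^{(r)})\) stated in the proposition.

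Next we verify the constant-term relations. The coefficient \(\rho(t^{(0)}_{ij})\) is the \((i,j)\)-component \(S_{ij}\) of the Olshanski matrix \(S\), which is upper triangular in the given ordering, so \(t_{ij}^{(0)} \mapsto 0\) for \(i>j\). The diagonal entries are \(q^{\pm 1}\) or \(1\) on \(E_{ii}\) and \(E_{-i,-i}\), and a direct inspection shows \(t^{(0)}_{ii}t^{(0)}_{-i,-i} = 1\).

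The main relation is then the Yang--Baxter equation \(S_{12}(x,y)S_{13}(x,z)S_{23}(y,z) = S_{23}(y,z)S_{13}(x,z)S_{12}(x,y)\), specialized at \(x = z\) (the loop variable) and with \(y,z\) replaced by \(w,w'\). Substituting \(\rho(T_{12}(w)) = S_{12}(z,w)\) and \(\rho(T_{13}(w')) = S_{13}(z,w')\) makes the RTT relation literally this identity in \(\End(V_z)\otimes\End(V)\otimes\End(V)\).

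The step I expect to be the main obstacle is justifying this identification at the level of expansions. The Yang--Baxter identity holds for rational functions, whereas the RTT relation lives in formal series in \(w^{-1}, w'^{-1}\). I would first clear denominators by multiplying both sides by \((z^{-1}w-1)(zw-1)(z^{-1}w'-1)(zw'-1)(w^{-1}w'-1)(ww'-1)\), obtaining a polynomial identity. Both formal expansions are then the unique expansions of the same rational function in the field \(\Bbbk(z)((w^{-1}))((w'^{-1}))\), since each multiplied factor is invertible there. The remaining care is the super-sign convention for embedding \(\End(V_z)\) as the first factor, which is handled by using the graded tensor product \(\iota\) from the preliminaries consistently on both sides.
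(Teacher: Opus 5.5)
Your proposal is correct and matches the argument the paper relies on. The paper gives no proof of its own and defers to the proof of Theorem 5.1 in \cite{CG}. That proof is exactly your route: check the constant-term relations, then obtain the RTT relation (with \(C=1\)) from the Yang--Baxter equation for \(S(x,y)\) specialized at \(x=z\), with the series-expansion bookkeeping you describe.

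One small caveat: reading off the coefficient of \(w^{-r}\) from \(S(z,w)\) as defined gives \(\rho(t_{ij}^{(r)}) = (-1)^{\Par(e_i)}(q-q^{-1})(z^{r}E_{ji} + z^{-r}E_{-j,-i})\). The displayed formula in the statement, with \(\epsilon\) and \(w^{\mp r}\), therefore matches only after correcting what appear to be typos there; your derivation does not recover it literally.
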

Setting \(s \colon \Bbbk[z^{\pm 1}] \to \Bbbk[z^{\pm 1}]; z \mapsto z^{-1}\), the map \(J \otimes s \in \End(V_z)\) is a morphism of \(U_q(\mathcal{L}_{\mathsf{tw}}\qq_n)\)-modules.

\subsection{Schur--Weyl duality for generalized quantum group of type \(A\)}

\begin{dfn}[{\cite[Definition 2.1]{KL4}}]
   The superalgebra \(\mathcal{U}(\epsilon)\) over \(\Bbbk\) is the superalgebra generated by \(k_{\mu}, e_i, f_i~(\mu \in P, i \in I)\) with parities
   \[\Par(k_{\mu}) = 0,~\Par(e_i) = \Par(f_i) = \epsilon_i + \epsilon_{i+1},\]
   subject to the following relations:
   \begin{enumerate}[(i)]
      \item \(k_0 = 1,~k_{\mu + \mu'} = k_{\mu}k_{\mu'}~(\mu,\mu' \in P)\),
      \item \(k_{\mu} e_i k_{\mu}^{-1} = \mathbf{q}(\mu,\alpha_i)e_i,~k_{\mu} f_i k_{\mu}^{-1} = \mathbf{q}(\mu,\alpha_i)^{-1}f_i~(\mu \in P, i \in I)\),
      \item \(e_i f_j - f_j e_i = \delta_{ij}\dfrac{k_{\alpha_i} - k_{\alpha_i}^{-1}}{q_i - q_i^{-1}}~(i,j \in I)\),
      \item \(e_i^2 = f_i^2 = 0~(i \in I_{\mathsf{odd}})\),
      \item \(e_{i}e_{j} - e_{j}e_{i} = f_{i}f_{j} - f_{j}f_{i} = 0~(|i-j| \neq 1)\),
      \item \(
         \begin{aligned}
            e_{i}^2 e_{j} - (-1)^{\epsilon_i}[2]e_{i} e_{j} e_{i} + e_{j} e_{i}^2 = 0 \\
            f_{i}^2 f_{j} - (-1)^{\epsilon_i}[2]f_{i} f_{j} f_{i} + f_{j} f_{i}^2 = 0
         \end{aligned}
         ~(|i-j| = 1, i \in I_{\mathsf{even}})\),
      \item \(
         \begin{aligned}
            e_{i} e_{i-1} e_{i} e_{i+1} &- e_{i} e_{i+1} e_{i} e_{i-1} + e_{i+1} e_{i} e_{i-1} e_{i} \\
            &- e_{i-1} e_{i} e_{i+1} e_{i} + (-1)^{\epsilon_i}[2]e_{i} e_{i-1} e_{i+1} e_{i} = 0 \\
            f_{i} f_{i-1} f_{i} f_{i+1} &- f_{i} f_{i+1} f_{i} f_{i-1} + f_{i+1} f_{i} f_{i-1} f_{i} \\
            &- f_{i-1} f_{i} f_{i+1} f_{i} + (-1)^{\epsilon_i}[2]f_{i} f_{i-1} f_{i+1} f_{i} = 0
         \end{aligned}
         (i \in I_{\mathsf{odd}})\).
   \end{enumerate}
   \(\mathcal{U}(\epsilon)\) is called the \textit{generalized quantum group of affine type \(A\) associated with \(\epsilon\)}.
   From now on, we write \(k_i \coloneqq k_{\alpha_i}\).
\end{dfn}

In what follows, we consider the case \(\epsilon = \epsilon_{M|M}\) and \(n = 2M \geq 4\).

\begin{rem}
   The generalized Schur--Weyl duality between \(\mathcal{U}(\epsilon)\) and quiver Hecke algebras described below was established in \cite{KL4}, where the condition \(M \neq N\) was imposed for the convenience of constructing the normalized R-matrix from a universal R-matrix.
   In \cite{KOS,KY}, the normalized R-matrix on the fundamental module is constructed explicitly under the weaker assumption \(M + N \geq 4\) alone. By appealing to these works, the Schur--Weyl duality may be established even when \(M = N\).
\end{rem}

\begin{rem}
   \(\mathcal{U}(\epsilon)\) is a Hopf superalgebra, whose comultiplication \(\Delta\) is given by
   \begin{align*}
      &\Delta(k_{\mu}) = k_{\mu} \otimes k_{\mu}, \\
      &\Delta(e_i) = 1 \otimes e_i + e_i \otimes k_{\alpha_i}^{-1}, \\
      &\Delta(f_i) = f_i \otimes 1 + k_i \otimes f_i.
   \end{align*}
\end{rem}

Let \(\mathring{\mathcal{U}}(\epsilon)\) denote the subalgebra of \(\mathcal{U}(\epsilon)\) generated by
   \[\mathring{\mathcal{U}}(\epsilon) = \langle k_{\mu},e_i,f_i \mid \mu \in P, i \in I\backslash\{0\}\rangle_{\Bbbk \text{-subalg}}.\]

\begin{dfn}
   For \(V \in \mathcal{U}(\epsilon) \sMod\) and \(\lambda \in P\), the \textit{weight space} \(V_{\lambda}\) of weight \(\lambda\) is defined by
   \[V_{\lambda} = \{v \in V \mid k_{\mu}v = \mathbf{q}(\lambda,\mu)v\}.\]
   The full subcategory \(\mathcal{C}(\epsilon)\) of \(\mathcal{U}(\epsilon) \smod \) consists of those modules \(V\) satisfying
   \[V = \bigoplus_{\lambda \in P_{\geq 0}}V_{\lambda};\]
   the objects of \(\mathcal{C}(\epsilon)\) are called \textit{polynomial modules}.
   The full subcategory \(\mathring{\mathcal{C}}(\epsilon)\) of \(\mathring{\mathcal{U}}(\epsilon) \smod \) is defined analogously.
\end{dfn}

   The set \(\mathbb{Z}_{\geq 0}^{n}(\epsilon)\) is defined by
   \[\mathbb{Z}_{\geq 0}^{n}(\epsilon) = \{\mathbf{m} = (m_1,\ldots,m_n) \in \mathbb{Z}_{\geq 0}^{n} \mid \epsilon_i = 1 \Rightarrow m_i \in \{0,1\}\}.\]
   For \(\mathbf{m} \in \mathbb{Z}_{\geq 0}^{n}(\epsilon)\), set
   \[|\mathbf{m}| = m_1 + \cdots + m_n.\]
   For \(i \in \mathbb{I}\), let
   \[\mathbf{e}_i = (0,\ldots,1,\ldots,0) \in \mathbb{Z}_{\geq 0}^{n}(\epsilon)\]
   denote the vector with \(1\) in the \(i\)-th component and \(0\) elsewhere.
   For \(l \in \mathbb{Z}_{\geq 0}\), the super vector space \(\mathcal{W}_{l,\epsilon}\) is defined by
   \begin{align*}
      \mathcal{W}_{l,\epsilon} &= \bigoplus_{\mathbf{m} \in \mathbb{Z}_{\geq 0}^{n}(\epsilon), |\mathbf{m}| = l}\Bbbk | \mathbf{m} \rangle, \\
      \Par(|\mathbf{m}\rangle) &\equiv m_{M+1} + \cdots + m_n~\bmod 2.
   \end{align*}
   For each scalar \(x \in \Bbbk^{\times}\), \(\mathcal{U}(\epsilon)\) acts on \(\mathcal{W}_{l,\epsilon}\) via
   \begin{align*}
      k_{\mu}|\mathbf{m} \rangle &= \mathbf{q}(\mu,\sum_{i \in \mathbb{I}}m_i \delta_i)|\mathbf{m}\rangle, \\
      e_i |\mathbf{m} \rangle &= \begin{cases}
         x^{\delta_{i0}}[m_{i+1}]|\mathbf{m} + \mathbf{e}_i - \mathbf{e}_{i+1}\rangle & \text{if $\mathbf{m} + \mathbf{e}_i - \mathbf{e}_{i+1} \in \mathbb{Z}_{\geq 0}^{n}(\epsilon)$,} \\
         0 & \text{otherwise;}
      \end{cases} \\
      f_i |\mathbf{m} \rangle &= \begin{cases}
         x^{-\delta_{i0}}[m_{i}]|\mathbf{m} - \mathbf{e}_i + \mathbf{e}_{i+1}\rangle & \text{if $\mathbf{m} - \mathbf{e}_i + \mathbf{e}_{i+1} \in \mathbb{Z}_{\geq 0}^{n}(\epsilon)$,} \\
         0 & \text{otherwise.}
      \end{cases}
   \end{align*}
   This module is denoted \(\mathcal{W}_{l,\epsilon}(x)\); for every \(x \in \Bbbk^{\times}\), we have \(\mathcal{W}_{l,\epsilon}(x) \in \mathcal{C}(\epsilon)\).
   The module \(\mathcal{W}_{l,\epsilon}(x)\) is called the \textit{fundamental module with spectral parameter \(x\)}.
   We write \(\mathcal{W}_{l,\epsilon} = \mathcal{W}_{l,\epsilon}(1)\) for short.
\begin{lem}[{\cite[Section 5.1]{KY}}]
   For \(l,m \in \mathbb{Z}_{\geq 0}\) and \(x,y \in \Bbbk^{\times}\), there is a decomposition of \(\mathring{\mathcal{U}}(\epsilon)\)-modules
   \[\mathcal{W}_{l,\epsilon}(x) \otimes_{\Bbbk} \mathcal{W}_{m,\epsilon}(y) \simeq \bigoplus_{t \in H(l,m)}V_{\epsilon}((l + m - t,t)),\]
   where
   \[H(l,m) = \{t \in \mathbb{Z} \mid 0 \leq t \leq \min\{l,m\},~(l+m-t,t) \in \mathcal{P}_{M|N}\}.\]
\end{lem}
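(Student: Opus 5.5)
The plan is to discard the spectral parameters, identify both factors with supersymmetric powers of the natural representation of a finite-type quantum superalgebra of type $A$, and then apply the super Pieri rule together with complete reducibility of polynomial modules.

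First, restriction to $\mathring{\mathcal{U}}(\epsilon)$ forgets $e_0,f_0$. The parameter $x$ occurs only in the action of $e_0$ and $f_0$, so as a $\mathring{\mathcal{U}}(\epsilon)$-module $\mathcal{W}_{l,\epsilon}(x)\simeq\mathcal{W}_{l,\epsilon}(1)$ for every $x\in\Bbbk^{\times}$. It therefore suffices to treat $x=y=1$.

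Second, I would identify $\mathring{\mathcal{U}}(\epsilon)$ with (a form of) $U_q(\gll_{M|N})$. For the standard sequence $\epsilon=\epsilon_{M|N}$, which is our case $\epsilon_{M|M}$, I expect this to follow directly from the defining relations (i)--(vii) restricted to $i\neq 0$, possibly after the sign twist $q_i=(-1)^{\epsilon_i}q^{(-1)^{\epsilon_i}}$ relating generalized quantum groups to quantum supergroups, as in \cite{KY}. Under this identification, $\mathcal{W}_{1,\epsilon}$ is the natural representation $\Bbbk^{M|N}$. Moreover, $\mathcal{W}_{l,\epsilon}$ is the highest weight module $V_\epsilon((l))$, a $q$-deformed supersymmetric power $S^l(\Bbbk^{M|N})$. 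Concretely, I would check two things on the explicit basis $|\mathbf{m}\rangle$:
\begin{itemize}
\item $|l\mathbf{e}_1\rangle$ is a highest weight vector of weight $l\delta_1$;
\item it generates the whole module, since the coefficients $[m_i]$ are nonzero for generic $q$.
\end{itemize}

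Third comes complete reducibility and characters. Polynomial modules in $\mathring{\mathcal{C}}(\epsilon)$ are semisimple, and their simple objects are $V_\epsilon(\lambda)$ with $\lambda$ in the $(M|N)$-hook $\mathcal{P}_{M|N}$. For this I would use the quantum Sergeev/Berele--Regev Schur--Weyl duality between $U_q(\gll_{M|N})$ and the Iwahori--Hecke algebra of $\mathfrak{S}_d$ on $(\Bbbk^{M|N})^{\otimes d}$, which is semisimple for generic $q$. The tensor product $\mathcal{W}_{l,\epsilon}\otimes\mathcal{W}_{m,\epsilon}$ is polynomial, being a direct summand of $(\Bbbk^{M|N})^{\otimes(l+m)}$, so it decomposes according to its character. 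The character of $V_\epsilon(\lambda)$ is the hook Schur function $hs_\lambda$, and $hs_\lambda=0$ exactly when $\lambda\notin\mathcal{P}_{M|N}$. Hook Schur functions are the image of ordinary Schur functions under a ring homomorphism from symmetric functions. Hence the classical Pieri rule
\[
s_{(l)}s_{(m)}=\sum_{t=0}^{\min\{l,m\}}s_{(l+m-t,t)}
\]
transfers to $hs_{(l)}hs_{(m)}=\sum_{t\in H(l,m)}hs_{(l+m-t,t)}$, where the terms with $(l+m-t,t)\notin\mathcal{P}_{M|N}$ drop out. Comparing characters then gives the asserted isomorphism.

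The main obstacle is the second and third steps together: pinning down the precise isomorphism between $\mathring{\mathcal{U}}(\epsilon)$ and $U_q(\gll_{M|N})$ with its sign conventions, and importing semisimplicity of the polynomial category with the classification of its simples by $\mathcal{P}_{M|N}$. I would first try to reduce both points to \cite{KY}, which treats exactly $\mathring{\mathcal{U}}(\epsilon)$ and the category $\mathring{\mathcal{C}}(\epsilon)$. Once that is in place, the remaining work is only the character identity above.
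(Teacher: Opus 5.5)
The paper does not prove this lemma; it imports it from \cite[\S 5.1]{KY}. Your character-theoretic argument is a sound way to supply a proof, and its concrete steps check out.
\begin{enumerate}[(i)]
\item The parameters $x,y$ enter only through $e_0,f_0$, and $\mathring{\mathcal U}(\epsilon)$ is a Hopf subalgebra, so the restricted tensor product does not depend on them.
\item The character of $\mathcal W_{l,\epsilon}$ is $\sum_{a+b=l}h_a(x_1,\dots,x_M)\,e_b(x_{M+1},\dots,x_n)=hs_{(l)}$.
\item Characters multiply because $\Delta(k_\mu)=k_\mu\otimes k_\mu$.
\item The Pieri rule, the vanishing of $hs_\lambda$ for $\lambda\notin\mathcal P_{M|N}$, and the linear independence of $\{hs_\lambda\}_{\lambda\in\mathcal P_{M|N}}$ together yield exactly the index set $H(l,m)$.
\end{enumerate}
Compared with the bare citation, your route derives the lemma from semisimplicity and hook Schur characters alone. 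It does not give the finer statement the paper quotes from \cite{KY} right afterwards: the normalization of the highest weight vectors $v(l,m,t)$ modulo $q\,\mathcal L_{l,\epsilon}\otimes\mathcal L_{m,\epsilon}$. That requires crystal bases, but it is not part of the lemma.

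One correction to your second step: the identification with $U_q(\mathfrak{gl}_{M|N})$ is not a direct consequence of relations (i)--(vii). For the odd index $i=M$ one has $\mathbf q(\alpha_M,\alpha_M)=q_Mq_{M+1}=-1$. Hence $k_{\alpha_M}$ anticommutes with $e_M$ and $f_M$, relation (iii) is a plain commutator, and $\Delta$ is an algebra map for the unsigned (non-Koszul) product on $\mathcal U(\epsilon)\otimes\mathcal U(\epsilon)$. In $U_q(\mathfrak{gl}_{M|N})$, by contrast, $K_{\alpha_M}$ commutes with $e_M$ (as $(\alpha_M,\alpha_M)=0$), the relation is an anticommutator, and tensor products carry Koszul signs. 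The two algebras match only after adjoining sign operators, i.e.\ a bosonization-type twist, which is the input from \cite{KY} you anticipated.

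Consequently, your claim that $\mathcal W_{l,\epsilon}\otimes\mathcal W_{m,\epsilon}$ is a direct summand of $(\Bbbk^{M|N})^{\otimes(l+m)}$ tacitly needs the twist to intertwine the two coproducts. You can bypass this. The module $\mathcal W_{l,\epsilon}\otimes\mathcal W_{m,\epsilon}$ already lies in $\mathring{\mathcal C}(\epsilon)$ with character $hs_{(l)}hs_{(m)}$. The fact you invoke anyway then finishes the proof: $\mathring{\mathcal C}(\epsilon)$ is semisimple with simple objects $V_\epsilon(\lambda)$, $\lambda\in\mathcal P_{M|N}$, of character $hs_\lambda$. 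That fact concerns single modules, so it transfers along any weight-preserving equivalence of categories with no compatibility with tensor products required.
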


There is a unique highest weight vector \(v(l,m,t) \in V_{\epsilon}((l + m - t,t)) \subset \mathcal{W}_{l,\epsilon}(x) \otimes_{\Bbbk} \mathcal{W}_{m,\epsilon}(y)\) satisfying the following conditions:
   \begin{enumerate}[(i)]
      \item \(v(l,m,t) \in \mathcal{L}_{l,\epsilon} \otimes \mathcal{L}_{m,\epsilon}\),
      \item \(v(l,m,t) \equiv |(l-t)\mathbf{e}_1 + t\mathbf{e}_2 \rangle \otimes |m\mathbf{e}_1 \rangle \bmod q\mathcal{L}_{l,\epsilon} \otimes \mathcal{L}_{m,\epsilon}\),
   \end{enumerate}
   where
   \begin{align*}
      \mathcal{L}_{l,\epsilon} &= \bigoplus_{\mathbf{m} \in \mathbb{Z}_{\geq 0}^{n}(\epsilon), |\mathbf{m}| = l}\mathbb{A}_0 |\mathbf{m}\rangle, \\
      \mathbb{A}_0 &= \{f(q)/g(q) \mid f(q),g(q) \in \mathbb{Q}[q], g(0) \neq 0\}.
   \end{align*}
   A morphism of \(\mathring{\mathcal{U}}(\epsilon)\)-modules
   \[\mathcal{P}_{t}^{l,m} \colon \mathcal{W}_{l,\epsilon} \otimes \mathcal{W}_{m,\epsilon} \to \mathcal{W}_{m,\epsilon} \otimes \mathcal{W}_{l,\epsilon}\]
   is then defined by
   \[\mathcal{P}_{t}^{l,m}(v(l,m,t')) = \delta_{tt'}v(l,m,t).\]
\begin{dfn}
   Let \(V \in \mathcal{C}(\epsilon)\) and let \(z\) be an indeterminate. The space
   \[V_{\text{aff}} = \Bbbk[z^{\pm1}] \otimes V\]
   carries a \(\mathcal{U}(\epsilon)\)-module structure given by
   \begin{align*}
      k_{\mu}.(1 \otimes v) &= 1 \otimes (k_{\mu}v), \\
      e_i.(1 \otimes v) &= z^{\delta_{i,0}} \otimes (e_i v), \\
      f_i.(1 \otimes v) &= z^{-\delta_{i,0}} \otimes (f_i v).
   \end{align*}
   This module is called the \textit{affinization} of \(V\).
\end{dfn}
\begin{dfn}[{\cite[Theorem 3.15]{KL3},\cite[Theorem 5.2]{KY},\cite{KOS}}]
   The map
   \begin{align*}
      R_{l,m}(z_{1}/z_{2}) &\colon \Bbbk(z_1,z_2) \otimes_{\Bbbk[z_1^{\pm1},z_2^{\pm1}]} (\mathcal{W}_{l,\epsilon})_{\text{aff}} \otimes (\mathcal{W}_{m,\epsilon})_{\text{aff}} \\
      &\to \Bbbk(z_1,z_2) \otimes_{\Bbbk[z_1^{\pm1},z_2^{\pm1}]} (\mathcal{W}_{m,\epsilon})_{\text{aff}} \otimes (\mathcal{W}_{l,\epsilon})_{\text{aff}}
   \end{align*}
   defined by
   \begin{align*}
      R_{l,m}(z_1/z_2) &= \sum_{t \in H(l,m)} \rho_{t}(z_1/z_2) \mathcal{P}_{t}^{l,m}, \\
      \rho_t(z) &= \prod_{i=t+1}^{\min\{l,m\}}\dfrac{1 - q^{l+m-2i+2}z}{z - q^{l+m-2i+2}}
   \end{align*}
   is a morphism of \(\Bbbk(z_1,z_2) \otimes \mathcal{U}(\epsilon)\)-modules.
   Here \(\rho_{\min\{l,m\}}(z) = 1\).
   The map is called the \textit{normalized R-matrix} and denoted by \(R_{l,m}^{\mathrm{norm}}\).
\end{dfn}
\begin{rem}[{\cite[Corollary 3.14]{KL3},\cite[Corollary 4.8,\(\S 3.1\)]{KY}}]\label{rem:computation of normalized R-matrix}
   Consider the case \(l = m = 1\).
   Then \(\mathcal{W}_{1,\epsilon} \simeq \Bbbk^{M|M}\), and the \(\mathring{\mathcal{U}}(\epsilon)\)-module decomposition reads
   \begin{align*}
      \mathcal{W}_{1,\epsilon} \otimes \mathcal{W}_{1,\epsilon} \simeq V_{\epsilon}((2)) \oplus V_{\epsilon}((1,1)).
   \end{align*}
   The normalized R-matrix is
   \[R_{1,1}(z) = \dfrac{1 - q^{2}z}{z - q^{2}} \mathcal{P}_{0}^{1,1} + \mathcal{P}_{1}^{1,1},\]
   where
   \begin{align*}
      \mathcal{P}_{0}^{1,1} &\colon \mathcal{W}_{1,\epsilon} \otimes \mathcal{W}_{1,\epsilon} \to V_{\epsilon}((2)), \\
      \mathcal{P}_{1}^{1,1} &\colon \mathcal{W}_{1,\epsilon} \otimes \mathcal{W}_{1,\epsilon} \to V_{\epsilon}((1,1))
   \end{align*}
   are the canonical projections.

   Define a linear map \(\mathcal{R}\) on \(\mathcal{W}_{1,\epsilon} \otimes \mathcal{W}_{1,\epsilon}\) by
   \[\mathcal{R}(| \mathbf{e}_i \rangle \otimes | \mathbf{e}_j \rangle) = \begin{cases}
      q^{-1} q_{i}^{-1} | \mathbf{e}_i \rangle \otimes | \mathbf{e}_i \rangle & \text{if $i = j$,} \\
      q^{-1} | \mathbf{e}_j \rangle \otimes | \mathbf{e}_i \rangle & \text{if $i > j$,} \\
      (q^{-2} - 1)| \mathbf{e}_i \rangle \otimes | \mathbf{e}_j \rangle + q^{-1} | \mathbf{e}_j \rangle \otimes | \mathbf{e}_i \rangle & \text{if $i < j$.}
   \end{cases}\]
   The map \(\mathcal{R}\) is an isomorphism of \(\mathring{\mathcal{U}}(\epsilon)\)-modules satisfying
   \[(\mathcal{R} - q^{-2})(\mathcal{R} + 1) = 0.\]
   Its eigenvalues are \(q^{-2}\) and \(-1\), and the corresponding eigenspace decomposition recovers the direct sum decomposition of \(\mathcal{W}_{1,\epsilon} \otimes \mathcal{W}_{1,\epsilon}\) as a \(\mathring{\mathcal{U}}(\epsilon)\)-module.
   Explicitly, the irreducible decomposition
   \[\mathcal{W}_{1,\epsilon} \otimes \mathcal{W}_{1,\epsilon} = V_{\epsilon}((2)) \oplus V_{\epsilon}((1,1))\]
   identifies each summand with an eigenspace of \(\mathcal{R}\).
   Since \(| \mathbf{e}_1 \rangle \otimes | \mathbf{e}_1 \rangle \in V((2))\), the irreducible summands correspond to the eigenspaces of \(\mathcal{R}\) as follows:
   \begin{align*}
      V((2)) &= \{v \otimes w \mid \mathcal{R}(u \otimes w) = q^{-2} u \otimes v \} \\
      &= \bigoplus_{i = 1}^{M} \Bbbk.| \mathbf{e}_i \rangle \otimes | \mathbf{e}_i \rangle \oplus \bigoplus_{i < j} \Bbbk.(q^{-1}| \mathbf{e}_i \rangle \otimes | \mathbf{e}_j \rangle + | \mathbf{e}_j \rangle \otimes | \mathbf{e}_i \rangle), \\
      V((1,1)) &= \{v \otimes w \mid \mathcal{R}(u \otimes w) = - u \otimes v \} \\
      &= \bigoplus_{i = M+1}^{M+N} \Bbbk.| \mathbf{e}_i \rangle \otimes | \mathbf{e}_i \rangle \oplus \bigoplus_{i < j} \Bbbk.(| \mathbf{e}_i \rangle \otimes | \mathbf{e}_j \rangle -q^{-1} | \mathbf{e}_j \rangle \otimes | \mathbf{e}_i \rangle).
   \end{align*}
   The normalized R-matrix \(R_{1,1}^{\mathrm{norm}}(z)\) is then given by
   \[R_{1,1}(z)^{\mathrm{norm}}(| \mathbf{e}_i \rangle \otimes | \mathbf{e}_j \rangle) = \begin{cases}
      \frac{1-q^{2}z}{z-q^{2}} | \mathbf{e}_i \rangle \otimes | \mathbf{e}_i \rangle & \text{if $i = j \leq M$,} \\
      | \mathbf{e}_i \rangle \otimes | \mathbf{e}_i \rangle & \text{if $i = j > M$,} \\
      \frac{1-q^2}{z-q^2}| \mathbf{e}_i \rangle \otimes | \mathbf{e}_j \rangle + \frac{q(1-z)}{z-q^2}| \mathbf{e}_j \rangle \otimes | \mathbf{e}_i \rangle & \text{if $i < j$,} \\
      \frac{z(1-q^2)}{z-q^2}| \mathbf{e}_i \rangle \otimes | \mathbf{e}_j \rangle + \frac{q(1-z)}{z-q^2}| \mathbf{e}_j \rangle \otimes | \mathbf{e}_i \rangle & \text{if $i > j$.}
   \end{cases}\]
\end{rem}

Set \(I = \mathbb{Z}\), and define a map \(X \colon I \to \Bbbk^{\times}\) by
\[X(i) = q^{-2i}.\]
The ratio \(X(i)/X(i+1)\) coincides with the pole of \(R_{1,1}^{\mathrm{norm}}\).
For \(i,j \in I\), let \(d_{ij}\) denote the multiplicity of \(X(j)/X(i)\) as a root of \(z - q^2\). Setting \(a_{ij} = - d_{ij} - d_{ji}\) recovers the following Cartan matrix of type \(A_{\infty}\):
\[[a_{ij}]_{i,j \in I} = \begin{bmatrix}
         \ddots & \vdots & \vdots & \vdots & \vdots & \rotatebox{70}{$\ddots$} \\
         \cdots & 2 & -1 & 0 & 0 & \cdots \\
         \cdots & -1 & 2 & -1 & 0 & \cdots \\
         \cdots & 0 & -1 & 2 & -1 & \cdots \\
         \cdots & 0 & 0 & -1 & 2 & \cdots \\
         \rotatebox{70}{$\ddots$} & \vdots & \vdots & \vdots & \vdots & \ddots
      \end{bmatrix}\]
The free commutative monoid \(\mathsf{Q}^{+}\) is defined by
   \[\mathsf{Q}^{+} \coloneqq \bigoplus_{i \in I}\mathbb{Z}_{\geq 0}.\alpha_{i},\]
   and a family of polynomials \(\{Q_{ij}(u,v)\}_{i,j \in I}\) by
   \[Q_{ij}(u,v) = \begin{cases}
      0 & \text{if $i = j$,} \\
      (u-v)^{-a_{ij}} & \text{if $i > j$,} \\
      (v-u)^{-a_{ij}} & \text{if $i < j$.} \\
   \end{cases}\]
From these data, we construct the quiver Hecke algebra \(R_{\beta}\) of type \(A_{\infty}\).
Let \(\mathcal{P}_{\beta}\) denote its polynomial representation (cf. \cite[(5.7)]{KL4}).
For \(\nu \in I^{\beta}\), let \(\mathcal{P}_{\nu}\) be the subalgebra of \(\mathcal{P}_{\beta}\) generated by \(e(\nu)\) and \(\{x_i\}_{1 \leq i \leq n}\),
and write \(\mathbb{O}_{\nu}\) for the completion of \(\mathcal{P}_{\nu}\) at \(X\):
\[\begin{array}{ccc}
    \mathcal{P}_{\nu} & \hookrightarrow & \mathbb{O}_{\nu} = \Bbbk[[X_1 - X(\nu_1),\ldots,X_n - X(\nu_n)]]e(\nu) \\
    \rotatebox{90}{$\in$} & & \rotatebox{90}{$\in$} \\
    x_i e(\nu) & \longmapsto & \frac{1}{x(\nu_i)}(X_i - X(\nu_i))e(\nu) \\
   \end{array}\]
Let \(\mathbb{K}_{\nu}\) be the quotient field of \(\mathbb{O}_{\nu}\), and write \(\mathbb{O}_{\beta}\) and \(\mathbb{K}_{\beta}\) for the direct sums over all \(\nu \in I^{\beta}\).
Since \(\mathfrak{S}_n\) acts on \(\mathbb{K}_{\beta}\), the semidirect product algebra \(\mathbb{K}_{\beta} \rtimes \Bbbk[\mathfrak{S}_n]\) is well defined and yields an embedding \(R_{\beta} \hookrightarrow \mathbb{K}_{\beta} \rtimes \Bbbk[\mathfrak{S}_n]\).

For each \(\nu \in I^{\beta}\) and \(1 \leq i \leq n\), set
\[V_{\nu_i} = (\mathcal{W}_{1,\epsilon})_{\text{aff}},\]
and
\begin{align*}
   V_{\nu} &= V_{\nu_1} \otimes \cdots \otimes V_{\nu_n}.
   \end{align*}
Let \(K = \Bbbk[X_1^{\pm1},\ldots,X_n^{\pm1}]\). Identifying \(X_i\) with the indeterminate \(z_i\) of the affinization of \(V_{\nu_i}\), we obtain
\[V_{\nu} \simeq K \otimes_{\Bbbk} (\mathcal{W}_{1,\epsilon})^{\otimes n}.\]
Define
\begin{align*}
   V^{\otimes \beta} &= \bigoplus_{\nu \in J^{\beta}}V_{\nu}e(\nu), \\
   V_{\mathbb{O}}^{\otimes \beta} &= \mathbb{O}_{\beta} \otimes_{K} V^{\otimes \beta}, \\
   V_{\mathbb{K}}^{\otimes \beta} &= \mathbb{K}_{\beta} \otimes_{\mathbb{O}_{\beta}} V_{\mathbb{O}}^{\otimes \beta} = \mathbb{K}_{\beta} \otimes_{K} V^{\otimes \beta}.
\end{align*}
Then \(V_{\mathbb{K}}^{\otimes \beta}\) carries a right action of \(\mathbb{K}_{\beta} \rtimes \Bbbk[\mathfrak{S}_n]\), where \(1 \otimes s_i\) acts by
\[\begin{array}{ccc}
    \mathbb{K}_{\nu}e(\nu) \otimes_{\Bbbk} V_{\nu} & \longrightarrow & \mathbb{K}_{s_k.\nu}e(s_k.\nu) \otimes_{\Bbbk} V_{s_k.\nu} \\
    \rotatebox{90}{$\in$} & & \rotatebox{90}{$\in$} \\
    fe(\nu) \otimes (v_1 \otimes \cdots \otimes v_n) & \longmapsto & (s_{k}.f)e(s_i.\nu) \otimes (\cdots \otimes R_{1,1}^{\mathrm{norm}}(v_i \otimes v_{i+1}) \otimes \cdots) \\
\end{array}\]
This makes \(V_{\mathbb{K}}^{\otimes \beta}\) a right \(R_{\beta}\)-module.
Since \(R_{1,1}^{\mathrm{norm}}\) is a morphism of \(\mathcal{U}(\epsilon)\)-modules, \(V_{\mathbb{K}}^{\otimes \beta}\) is in fact a \((\mathcal{U}(\epsilon),R_{\beta})\)-bimodule.

\begin{thm}[{\cite[Proposition 5.4]{KL4}}]\label{thm:Schur--Weyl subbimod}
 \(V_{\mathbb{O}}^{\otimes \beta}\) is a \((\mathcal{U}(\epsilon),R_{\beta})\)-subbimodule of \(V_{\mathbb{K}}^{\otimes \beta}\).
\end{thm}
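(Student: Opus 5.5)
The plan is to follow the strategy of \cite[Proposition 5.4]{KL4}, which in turn mirrors \cite[Theorem 3.3]{KKK}. Stability of \(V_{\mathbb{O}}^{\otimes \beta}\) under the left \(\mathcal{U}(\epsilon)\)-action is immediate. The action of \(\mathcal{U}(\epsilon)\) on \(V^{\otimes \beta}\) is \(K\)-linear up to the \(z_i\)-twists coming from the affinization, and these only multiply by powers of \(X_i\). These powers are units in \(\mathbb{O}_{\beta}\), since \(X(\nu_i) \neq 0\). Hence \(\mathbb{O}_{\beta} \otimes_K V^{\otimes \beta}\) is stable, and the whole problem reduces to the right action of \(R_{\beta}\).

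For \(R_{\beta}\), stability under \(e(\nu)\) and \(x_i\) is clear, because these act through multiplication by elements of \(\mathbb{O}_{\beta}\). It therefore remains to show that each generator \(\sigma_k e(\nu)\) maps \(\mathbb{O}_{\nu} \otimes V_{\nu}\) into \(\mathbb{O}_{s_k\nu} \otimes V_{s_k\nu}\). Inside \(\mathbb{K}_{\beta} \rtimes \Bbbk[\mathfrak{S}_n]\), the element \(\sigma_k e(\nu)\) has the form \(\bigl(a(x)\, s_k + b(x)\bigr)e(\nu)\), where:
\begin{enumerate}[(i)]
\item if \(\nu_k = \nu_{k+1}\), then \(b = (x_k - x_{k+1})^{-1}\) up to sign, and \(a\) is such that the combination is regular;
\item if \(\nu_k \neq \nu_{k+1}\), then \(b = 0\) and \(a\) is a suitable normalizing factor, e.g.\ a unit times \((x_{k+1}-x_k)\) or \(1\) according to whether \(\nu_{k+1} = \nu_k \pm 1\) or \(|\nu_k - \nu_{k+1}| > 1\).
\end{enumerate}
The right action of \(s_k\) applies \(R^{\mathrm{norm}}_{1,1}(X_k/X_{k+1})\) in the tensor factors \(k,k+1\). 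The key step is a case analysis on \((\nu_k,\nu_{k+1})\) using the explicit formula in Remark \ref{rem:computation of normalized R-matrix}. That formula has a simple pole only at \(z = q^2\), and the poles of the coefficients \(a,b\) are located at \(X_k = X_{k+1}\).

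The cases are as follows.
\begin{enumerate}[(1)]
\item If \(X(\nu_k)/X(\nu_{k+1}) \neq q^2, 1\), then \(R^{\mathrm{norm}}_{1,1}(X_k/X_{k+1})\) is regular on \(\mathbb{O}_{\nu}\), and \(a\) is a polynomial, so stability is clear.
\item If \(X(\nu_k)/X(\nu_{k+1}) = q^2\), i.e.\ \(\nu_{k+1} = \nu_k + 1\), the pole of \(R^{\mathrm{norm}}\) along \(X_k - q^2X_{k+1}\) must be cancelled by the factor \(a\). This factor is a unit multiple of \(x_{k+1}-x_k\), which under the completion map is a unit multiple of \(X_k - q^2 X_{k+1}\). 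This is where the precise normalization \(x_i \mapsto (X_i - X(\nu_i))/X(\nu_i)\) and the choice of \(Q_{ij}\) are used.
\item If \(\nu_k = \nu_{k+1}\), then \(R^{\mathrm{norm}}_{1,1}(1) = \mathcal{P}^{1,1}_1 - \mathcal{P}^{1,1}_0\), up to the scalar \(\rho_0(1) = -1\). One checks that \(R^{\mathrm{norm}}_{1,1}(z) - \mathrm{id}\) vanishes to first order at \(z=1\) in the appropriate sense after composing with the flip. Concretely, \(\bigl((x_k - x_{k+1})^{-1}(f\cdot s_k - f)\bigr)\) applied to \(f \otimes v\) equals a divided-difference term plus \((x_k-x_{k+1})^{-1}\) times \((R^{\mathrm{norm}}(X_k/X_{k+1})\circ\text{flip} - \mathrm{id})\). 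The divided difference is regular, and the latter is regular since \(R^{\mathrm{norm}}_{1,1}(z)\) equals the superpermutation at \(z = 1\), which is read off from the explicit matrix entries \(\frac{q(1-z)}{z-q^2}\), etc.
\end{enumerate}

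I expect the main obstacle to be case (3). There one must verify that \(R^{\mathrm{norm}}_{1,1}(1)\) coincides exactly with the super flip \(P\) with the correct Koszul signs, including on the odd vectors \(|\mathbf{e}_i\rangle\) with \(i > M\), where the diagonal entry is \(1\) rather than \(\frac{1-q^2z}{z-q^2}\). I would check this entrywise from Remark \ref{rem:computation of normalized R-matrix} first, and then conclude as in \cite{KKK} that the difference quotient lands in \(\mathbb{O}_{\beta} \otimes_K V^{\otimes\beta}\).
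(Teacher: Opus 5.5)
The paper gives no proof of its own here; it cites \cite[Proposition 5.4]{KL4}, which runs the argument of \cite{KKK}. That argument checks the generators, cancels the pole of \(R^{\mathrm{norm}}_{1,1}\) at \(z=q^2\) with the factor \(P_{\nu_k,\nu_{k+1}}\), and, for \(\nu_k=\nu_{k+1}\), shows that the numerator vanishes on \(X_k=X_{k+1}\). Your reduction, the \(\mathcal{U}(\epsilon)\)-part, and cases (1)--(2) follow this correctly. One minor slip: for \(\nu_{k+1}=\nu_k-1\) the factor is \(1\), not a multiple of \(x_{k+1}-x_k\), which is harmless for stability.

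The gap is in case (3), the step you flagged, and the identity you plan to verify there is false. In fact \(\rho_0(1)=\frac{1-q^2}{1-q^2}=1\), not \(-1\), so \(R^{\mathrm{norm}}_{1,1}(1)=\mathcal{P}^{1,1}_0+\mathcal{P}^{1,1}_1=\mathrm{id}\), since each \(\mathcal{P}^{1,1}_t\) fixes \(v(1,1,t)\) and kills the other summand. The explicit entries in Remark \ref{rem:computation of normalized R-matrix} confirm this. The coefficient \(\frac{q(1-z)}{z-q^2}\) of the swapped vector vanishes at \(z=1\), and every remaining coefficient becomes \(1\). This includes \(|\mathbf{e}_i\rangle\otimes|\mathbf{e}_i\rangle\) for \(i>M\), where a super flip would give \(-1\). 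So \(R^{\mathrm{norm}}_{1,1}(1)\) is not the superpermutation, and the paper's action of \(s_k\) contains no extra flip.

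If you insert ``\(\circ\,\text{flip}\)'' as in your decomposition, the numerator restricted to \(X_k=X_{k+1}\) is \(f|_{X_k=X_{k+1}}\otimes(\tau v-v)\), where \(\tau\) is the flip of slots \(k,k+1\). This is nonzero, e.g.\ for \(v_k\otimes v_{k+1}=|\mathbf{e}_1\rangle\otimes|\mathbf{e}_2\rangle\). Dividing by \(x_k-x_{k+1}\) then produces a genuine pole, and the argument fails.

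The correct step is simpler. Write \(s_k(f)\otimes R^{\mathrm{norm}}v-f\otimes v=(s_kf-f)\otimes R^{\mathrm{norm}}v+f\otimes(R^{\mathrm{norm}}-\mathrm{id})v\). The first term is a divided difference times a regular element. Every entry of \(R^{\mathrm{norm}}_{1,1}(z)-\mathrm{id}\) is \((1-z)\) times a function regular at \(z=1\), e.g.\ \(\frac{1-q^2z}{z-q^2}-1=\frac{(1+q^2)(1-z)}{z-q^2}\). Moreover, \(1-X_k/X_{k+1}\) is a unit multiple of \(x_k-x_{k+1}\) in \(\mathbb{O}_\nu\) because \(X(\nu_k)=X(\nu_{k+1})\). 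This is exactly the \cite{KKK} input that a normalized R-matrix \(R^{\mathrm{norm}}_{V,V}\) equals the identity at \(z_1=z_2\). With this correction your proof is the cited one.
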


Let \(\mathcal{F}_{\epsilon,\beta}\) denote the functor
\[V_{\mathbb{O}}^{\otimes \beta} \otimes_{R_{\beta}} - \colon R_{\beta} \gMod  \to \mathcal{U}(\epsilon) \sMod ,\]
and set
   \begin{align*}
      \mathcal{F}_{\epsilon,n} &= \bigoplus_{|\beta| = n} \mathcal{F}_{\epsilon,\beta} \colon \bigoplus_{|\beta| = n} R_{\beta} \gMod  \to \mathcal{U}(\epsilon) \sMod,  \\
      \mathcal{F}_{\epsilon} &= \bigoplus_{n \geq 0}\mathcal{F}_{\epsilon,n} \colon \bigoplus_{\beta \in \mathsf{Q}^{+}} R_{\beta} \gMod  \to \mathcal{U}(\epsilon) \sMod.
   \end{align*}

\begin{thm}[{\cite[Theorem 5.5]{KL4}}]
   \(\mathcal{F}_{\epsilon}\) induces an exact monoidal functor
   \[\mathcal{F}_{\epsilon} \colon \bigoplus_{\beta \in \mathsf{Q}^{+}} R_{\beta} \gmod \to \mathcal{C}(\epsilon).\]
\end{thm}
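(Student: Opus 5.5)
The plan follows the strategy of \cite[Section 3]{KKK}, adapted as in \cite{KL4}. It proceeds in three steps: (a) the functor lands in \(\mathcal{C}(\epsilon)\) on finite-dimensional modules, (b) it is exact, and (c) it is monoidal.

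For (a), let \(M \in R_{\beta} \gmod\). Since \(M\) is finite-dimensional and graded, each \(x_i\) acts nilpotently on \(M\). Hence the action of \(R_{\beta}\) on \(M\) extends to the completion \(\widehat{R}_{\beta} = \mathbb{O}_{\beta} \otimes_{\mathcal{P}_{\beta}} R_{\beta}\), which by the PBW basis is \(\bigoplus_{w} \mathbb{O}_{\beta}\sigma_w\) with \(x_i e(\nu)\) corresponding to \((X_i - X(\nu_i))/X(\nu_i)\). Therefore
\[V^{\otimes\beta}_{\mathbb{O}} \otimes_{R_{\beta}} M \simeq V^{\otimes\beta}_{\mathbb{O}} \otimes_{\widehat{R}_{\beta}} M .\]
Since \(M\) is killed by a power of the maximal ideal, \(z_i = X_i\) is effectively specialized near \(X(\nu_i)\). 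The result is then a finite-dimensional subquotient of a sum of \((\mathcal{W}_{1,\epsilon})^{\otimes n}\) twisted by spectral parameters, so all its weights lie in \(P_{\geq 0}\). This gives \(\mathcal{F}_{\epsilon}(M) \in \mathcal{C}(\epsilon)\).

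For (b), it suffices to show that \(V^{\otimes\beta}_{\mathbb{O}}\) is flat as a right \(\widehat{R}_{\beta}\)-module. I would argue as in \cite[Theorem 3.2.1]{KKK}. First, \(V^{\otimes\beta}_{\mathbb{O}}\) is free of finite rank over \(\mathbb{O}_{\beta}\). Then, after base change to \(\mathbb{K}_{\beta}\), the algebra \(\widehat{R}_{\beta}\) becomes \(\mathbb{K}_{\beta} \rtimes \Bbbk[\mathfrak{S}_n]\), which is Morita equivalent to \(\mathbb{K}_{\beta}^{\mathfrak{S}_n}\). The key claim is that \(V^{\otimes\beta}_{\mathbb{O}}\) is a direct summand of a finite direct sum of copies of \(\widehat{R}_{\beta}\), i.e.\ projective. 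I would prove this by exhibiting \(V^{\otimes\beta}_{\mathbb{O}}\) as \(W \otimes_{\mathbb{O}_{\beta}^{\mathfrak{S}_n}} \widehat{R}_{\beta}\)-type data. Concretely, I would check that the natural map
\[\bigl(V^{\otimes\beta}_{\mathbb{O}}\bigr)e(\nu) \otimes_{\mathbb{O}_\nu} \widehat{R}_{\beta} \to V^{\otimes\beta}_{\mathbb{O}}\]
is split surjective. Surjectivity follows because the normalized R-matrix \(R^{\mathrm{norm}}_{1,1}\) has no pole at \(X(\nu_a)/X(\nu_{a+1})\) on the image of the intertwiners. This is exactly the content of Theorem \ref{thm:Schur--Weyl subbimod}, together with the zero/pole bookkeeping of Remark \ref{rem:computation of normalized R-matrix} that defined \(a_{ij}\). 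Flatness then gives exactness on \(R_{\beta} \gmod\).

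For (c), I would construct an isomorphism of \((\mathcal{U}(\epsilon), R_{\beta_1}\otimes R_{\beta_2})\)-bimodules
\[V^{\otimes\beta_1}_{\mathbb{O}} \otimes V^{\otimes\beta_2}_{\mathbb{O}} \xrightarrow{\sim} V^{\otimes(\beta_1+\beta_2)}_{\mathbb{O}}\, e(\beta_1,\beta_2),\]
using \(V_{\nu*\eta} = V_\nu \otimes V_\eta\) and \(\mathbb{O}_{\nu*\eta} = \mathbb{O}_\nu \widehat{\otimes} \mathbb{O}_\eta\). This is compatible with the coproduct because \(R^{\mathrm{norm}}_{1,1}\) is a \(\mathcal{U}(\epsilon)\)-morphism. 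By adjunction it induces a natural map
\[\mathcal{F}_{\epsilon}(M_1) \otimes \mathcal{F}_{\epsilon}(M_2) \to \mathcal{F}_{\epsilon}(M_1 \circ M_2).\]
To show this map is an isomorphism, I would show that
\[V^{\otimes(\beta_1+\beta_2)}_{\mathbb{O}} \otimes_{R_{\beta_1+\beta_2}} R_{\beta_1+\beta_2}e(\beta_1,\beta_2) \simeq V^{\otimes(\beta_1+\beta_2)}_{\mathbb{O}}e(\beta_1,\beta_2),\]
which is tautological. Surjectivity then comes from the coset decomposition of Remark \ref{rem:convolution product of finite dimensional module is finite dimensional}, and a dimension count (via (b), reducing to \(M_i\) one-dimensional by exactness and five-lemma) gives injectivity. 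Compatibility with associativity constraints is formal. I expect the main obstacle to be the projectivity claim in (b), namely controlling the poles of \(R^{\mathrm{norm}}_{1,1}\) so that the intertwiners preserve the integral lattice \(V^{\otimes\beta}_{\mathbb{O}}\) and generate it.
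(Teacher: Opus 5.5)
Step (a) is fine; it is essentially the finite-dimensionality argument in Section~\ref{subsec:Properties of the Schur--Weyl functors} (cf.\ \cite[Theorem 3.4]{KKK}). The paper does not reprove this theorem; it quotes it from \cite{KL4}. The genuine gap is in step (b).

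You reduce exactness to projectivity of $V^{\otimes\beta}_{\mathbb{O}}$ over $\widehat{R}_\beta$, and you want to get this from a split surjection $V^{\otimes\beta}_{\mathbb{O}}e(\nu)\otimes_{\mathbb{O}_\nu}\widehat{R}_\beta\to V^{\otimes\beta}_{\mathbb{O}}$. For a fixed $\nu$ this map is in general not surjective. Take $\beta=\alpha_1+\alpha_2$: then $e(1,2)\widehat{R}_\beta e(2,1)=\mathbb{O}_\beta e(1,2)\sigma_1$. At the point $(X(\nu_1),X(\nu_2))$, the operator by which $\sigma_1$ acts specializes to a degenerate endomorphism of $\mathcal{W}_{1,\epsilon}^{\otimes 2}$. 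In one direction, the simple zero of $Q_{12}$ cancels the pole of $R^{\mathrm{norm}}_{1,1}$ on $V_\epsilon((2))$ but kills $V_\epsilon((1,1))$; in the other, $R^{\mathrm{norm}}_{1,1}$ itself vanishes on $V_\epsilon((2))$. By Nakayama, $V^{\otimes\beta}_{\mathbb{O}}e(1,2)\sigma_1\subsetneq V^{\otimes\beta}_{\mathbb{O}}e(2,1)$.

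Summed over $\nu$, the map is trivially surjective, since the idempotents $e(\nu)$ already hit everything. So the entire content is a right $\widehat{R}_\beta$-linear splitting, and your sketch gives no way to produce one. The pole bookkeeping you invoke (Theorem~\ref{thm:Schur--Weyl subbimod}) only shows that the intertwiners preserve the lattice, i.e.\ that $V^{\otimes\beta}_{\mathbb{O}}$ is an $\widehat{R}_\beta$-module at all. The generic description via $\mathbb{K}_\beta\rtimes\Bbbk[\mathfrak{S}_n]$ cannot see the integrality at the special points, which is where the difficulty lies. Projectivity does hold a posteriori, since $V^{\otimes\beta}_{\mathbb{O}}$ is finitely generated and flat over the Noetherian algebra $\widehat{R}_\beta$. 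A direct proof, however, would require controlling the lattice at every such point.

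The missing idea is the one from \cite{KKK} that the paper reproduces for $RC_\beta$ in Theorem~\ref{thm:exactness of SW functor for queer}: flatness over $R_\beta$ follows from flatness over the polynomial subalgebra, with no further information about the lattice. Apply Lemma~\ref{lem:flatness lemma} to $A=\bigoplus_{\nu\in I^\beta}\Bbbk[x_1,\ldots,x_n]e(\nu)\subset B=R_\beta$:
condition (a) is the PBW decomposition $R_\beta=\bigoplus_{w\in\mathfrak{S}_n}A\sigma_w$;
condition (b) holds because $\Hom_A(R_\beta,A)\simeq R_\beta$ up to grading shifts, by Frobenius reciprocity and the $R_\beta$-analogue of Lemma~\ref{lem:ind and coind are isom} applied blockwise to $Ae(\nu)\simeq R_{\alpha_{\nu_1}}\otimes\cdots\otimes R_{\alpha_{\nu_n}}$;
condition (c) is the finiteness of the global dimension of $R_\beta$.
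Since $V^{\otimes\beta}_{\mathbb{O}}$ is free over $\mathbb{O}_\beta$, and $\mathbb{O}_\beta$ is flat over $A$ (a completion of a Noetherian ring), $V^{\otimes\beta}_{\mathbb{O}}$ is flat over $A$, hence over $R_\beta$. This gives exactness on all of $R_\beta\gMod$.

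A smaller point on your step (c). Once you have $V^{\otimes\beta_1}_{\mathbb{O}}\widehat{\otimes}V^{\otimes\beta_2}_{\mathbb{O}}\simeq V^{\otimes(\beta_1+\beta_2)}_{\mathbb{O}}e(\beta_1,\beta_2)$ as bimodules (the tensor product must be completed for this to be an isomorphism), the isomorphism $\mathcal{F}_{\epsilon}(M_1\circ M_2)\simeq\mathcal{F}_{\epsilon}(M_1)\otimes\mathcal{F}_{\epsilon}(M_2)$ follows directly. Each $M_i$ is killed by a power of the augmentation ideal, so it does not see the completion. The dimension count is therefore unnecessary. The proposed reduction to one-dimensional $M_i$ is also unavailable, since simple objects of $R_\beta\gmod$ need not be one-dimensional (e.g.\ for $\beta=2\alpha_i$).
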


\subsection{Construction of the bimodules \(V'^{\otimes \beta}_{\mathbb{O}}\)}\label{subsec:Construction of the bimodules 1}
   For the data \(I = \mathbb{Z}\) above, consider the direct sum decomposition \(I = I_{\mathsf{even}} = \mathbb{Z}\), and set \(J = I_{\mathsf{even}} \times \{0,1\} = \mathbb{Z} \times \{0,1\}\).
   Based on these data, we construct the quiver Hecke--Clifford superalgebra \(RC_{\beta}\) of type \(A_{\infty}\).
   Define a map \(X' \colon J \to \Bbbk^{\times}\) by
   \[X'(j,\varepsilon) = q^{-2j}.\]

   For each \(\nu \in J^{\beta}\), the superalgebras \(\mathcal{P}'_{\nu}\) and \(\mathbb{O}'_{\nu}\) are defined by
   \begin{align*}
      \mathcal{P}'_{\nu} &= \langle e(\nu),x_i \mid 1 \leq i \leq n \rangle_{\text{subalg}}, \\
      \mathbb{O}'_{\nu} &= \Bbbk[[X_1 - X(\nu_1),\ldots,X_n - X(\nu_n)]]e(\nu).
   \end{align*}
   Here \(\mathcal{P}'_{\nu}\) is the \(RC_{\beta}\)-analogue of the subalgebra \(\mathcal{P}_{\nu}\) of the polynomial representation of \(R_{\beta}\), and \(\mathbb{O}'_{\nu}\) is its completion.
   In analogy with the case of \(R_{\beta}\), the embedding \(\mathcal{P}'_{\nu} \hookrightarrow \mathbb{O}'_{\nu}\) is given by
   \[\begin{array}{ccc}
      \mathcal{P}'_{\nu} & \hookrightarrow & \mathbb{O}'_{\nu} \\
      \rotatebox{90}{$\in$} & & \rotatebox{90}{$\in$} \\
      x_i e(\nu) & \longmapsto & \frac{1}{x(\nu_i)}(X_i - X(\nu_i))e(\nu) \\
   \end{array}\]
   Let \(\mathbb{K}'_{\nu}\) be the quotient field of \(\mathbb{O}'_{\nu}\), and set
   \begin{align*}
      \mathbb{O}'_{\beta} &= \bigoplus_{\nu \in J^{\beta}} \mathbb{O}'_{\nu}, \\
      \mathbb{K}'_{\beta} &= \bigoplus_{\nu \in J^{\beta}} \mathbb{K}'_{\nu}.
   \end{align*}
   For \(\nu \in J^{\beta}\) and \(1 \leq i \leq n\), set
   \[V_{\nu_{i}} = (\mathcal{W}_{1,\epsilon})_{\text{aff}},\]
   and
   \[V_{\nu} = V_{\nu_1} \otimes \cdots \otimes V_{\nu_n}.\]
   Setting \(K = \Bbbk[X_1^{\pm1},\ldots,X_{n}^{\pm1}]\), we naturally have
   \[V_{\nu} \simeq K \otimes_{\Bbbk} \mathcal{W}_{1,\epsilon}^{\otimes n}.\]
   Define
   \begin{align*}
      V'^{\otimes \beta} &= \bigoplus_{\nu \in J^{\beta}}V_{\nu}e(\nu), \\
      V'^{\otimes \beta}_{\mathbb{O}} &= \mathbb{O}'_{\beta} \otimes_{K} V'^{\otimes \beta}, \\
      V'^{\otimes \beta}_{\mathbb{K}} &= \mathbb{K}'_{\beta} \otimes_{\mathbb{O}'_{\beta}} V'^{\otimes \beta}_{\mathbb{O}} = \mathbb{K}'_{\beta} \otimes_{K} V'^{\otimes \beta}.
   \end{align*}
   Let \(P \colon \mathcal{W}_{1,\epsilon} \to \mathcal{W}_{1,\epsilon}\) be an odd linear map with \(P^2 = 1\).
   We define the map \(-.e(\nu)\c_i \colon  V'^{\otimes \beta}_{\mathbb{K}}e(\nu) \longrightarrow V'^{\otimes \beta}_{\mathbb{K}}e(c_{i}(\nu))\) by
   \[f \otimes (v_1 \otimes \cdots \otimes v_n).e(\nu)\c_i = (-1)^{\Par(v_{i}) + \cdots + \Par(v_n)} \sqrt{-1} f \otimes (\cdots \otimes P(v_i) \otimes \cdots)e(c_{i}(\nu)).\]
   We then define the maps \(-.e(\nu)x_i \colon V'^{\otimes \beta}_{\mathbb{K}}e(\nu) \to V'^{\otimes \beta}_{\mathbb{K}}e(\nu)\) and \(-.e(\nu)\tilde{s}_k \colon V'^{\otimes \beta}_{\mathbb{K}}e(\nu) \to V'^{\otimes \beta}_{\mathbb{K}}e(s_{k}(\nu))\).
   When \(\pi_2(\nu_j) = \overline{0}\) for all \(j\), the maps coincide with the right action of \(R_{\beta}\):
   \begin{align*}
      f \otimes (v_1 \otimes \cdots \otimes v_n).e(\nu)x_i &= f x_i \otimes (v_1 \otimes \cdots \otimes v_n)e(\nu), \\
      f \otimes (v_1 \otimes \cdots \otimes v_n).e(\nu)\tilde{s}_k &= \begin{cases}
         s_{k}(f)P_{\pi_{1}(\nu_k),\pi_{1}(\nu_{k+1})}(x_k,x_{k+1}) & \\
         \otimes (\cdots \otimes R_{1,1}^{\mathrm{norm}}(v_{k} \otimes v_{k+1}) \otimes \cdots)e(s_{k}(\nu)) & \text{if $\nu_k \neq \nu_{k+1},$}\\
         (s_{k}(f) \otimes (\cdots \otimes R_{1,1}^{\mathrm{norm}}(v_{k} \otimes v_{k+1}) \otimes \cdots) & \\
         - f \otimes (v_1 \otimes \cdots \otimes v_n))/(x_k - x_{k+1})e(\nu) & \text{if $\nu_k = \nu_{k+1},$}
      \end{cases}
   \end{align*}
   where
   \[P_{\pi_{1}(\nu_k),\pi_{1}(\nu_{k+1})}(x_k,x_{k+1}) = \begin{cases}
      1 & \text{if $\pi_{1}(\nu_k) > \pi_{1}(\nu_{k+1}),$} \\
      \tilde{\mathcal{Q}}_{\pi_{1}(\nu_k),\pi_{1}(\nu_{k+1})}(x_k,x_{k+1}) & \text{if $\pi_{1}(\nu_k) < \pi_{1}(\nu_{k+1}).$}
   \end{cases}\]
   If there exists \(j\) with \(\pi_2(\nu_j) = \overline{1}\), let \(S_\nu\) denote the set of indices \(j\) satisfying \(\pi_2(\nu_j) = \overline{1}\), indexed by the ordering as
   \[S_{\nu} = \{j_1 < \cdots < j_l\}.\]
   The maps are then defined by
   \begin{align*}
      -.e(\nu)x_i &= -.e(\nu)\c_{j_1} \cdots \c_{j_l}.x_i.(-1)^{\delta_{i,j_l}}\c_{j_l} \cdots (-1)^{\delta_{i,j_1}}\c_{j_1}, \\
      -.e(\nu)\tilde{s}_{j_k} &= -.e(\nu)\c_{j_1} \cdots \c_{j_l}. \tilde{s}_{j_k} .\c_{s_{k}(j_l)} \cdots \c_{s_{k}(j_1)}.
   \end{align*}
   Note that the \(x_i\) and \(\tilde{s}_k\) on the right-hand sides actually denote \(-.e(c_{j_l} \cdots c_{j_1}(\nu))x_i\) and \(-.e(c_{j_l} \cdots c_{j_1}(\nu))\tilde{s}_k\). Since \(S_{c_{j_l} \cdots c_{j_1}(\nu)} = \varnothing\), these actions are already defined.
\begin{lem}\label{lem:construction of right action of untwisted version}
   The maps defined above endow \(V'^{\otimes \beta}_{\mathbb{K}}\) with a right \(\mathcal{KS}_{\beta}\)-module structure.
\end{lem}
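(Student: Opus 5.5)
The plan is to check the defining relations of \(\mathcal{KS}_{\beta}\) one by one, after reducing to the ``even'' summands where the action is already known. The reduction goes through the operators \(-.e(\nu)\c_i\). So the first step is the Clifford relations: \((\c_i)^2 = 1\) and \(\c_i\c_j + \c_j\c_i = 0\) for \(i \neq j\), acting on \(V'^{\otimes \beta}_{\mathbb{K}}\).

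For \(\c_i^2\), the operator \(P\) is odd with \(P^2=1\), so \(\Par(P(v_i)) = \Par(v_i)+1\). The two signs \((-1)^{\Par(v_i)+\cdots+\Par(v_n)}\) therefore differ by exactly one factor \(-1\). Combined with \((\sqrt{-1})^2 = -1\), this gives \(+1\).

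For \(i<j\), apply \(\c_i\) then \(\c_j\), and compare with the opposite order. The sign exponent picked up by \(\c_j\) does not see the parity flip at position \(i\). The exponent picked up by \(\c_i\) does see the flip at position \(j\). Hence the two composites differ by \(-1\), which is anticommutation. I will also record that \(\c_i\) maps \(e(\nu)\) to \(e(c_i(\nu))\), matching (RC1), and that it commutes with multiplication by \(f \in \mathbb{K}'_\beta\).

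With the Clifford relations in hand, the operator \(C_\nu := \c_{j_1}\cdots\c_{j_l}\) is invertible, with inverse \(\c_{j_l}\cdots\c_{j_1}\). It gives an isomorphism from \(V'^{\otimes\beta}_{\mathbb{K}}e(\nu)\) onto \(V'^{\otimes\beta}_{\mathbb{K}}e(\nu^{\dagger})\), where \(\nu^\dagger\) has all \(\pi_2 = 0\). On the \(e(\nu^\dagger)\)-summands, the action of \(\mathbb{K}'_\beta\) and \(\tilde s_k\) is the transport of the \(\mathbb{K}_\beta \rtimes \Bbbk[\mathfrak{S}_n]\)-type action from \cite{KL4}, rescaled by \(P_{ij}\). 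I would verify there the relations \(\tilde s_k^2 = R_{k,k+1}\), the braid relations, and \(\tilde s_k a = (s_k.a)\tilde s_k\). These come from three facts:
\begin{enumerate}[(a)]
\item the unitarity \(R^{\mathrm{norm}}_{1,1}(z)R^{\mathrm{norm}}_{1,1}(z^{-1}) = 1\) together with \(P_{ij}(x_k,x_{k+1})P_{ji}(x_{k+1},x_k) = \tilde{\mathcal Q}_{ij}\), which yields \(R_{k,k+1}\) in the case \(\pi_1(\nu_k)\neq\pi_1(\nu_{k+1})\), and a direct computation in the case \(\nu_k=\nu_{k+1}\);
\item the Yang--Baxter equation for \(R^{\mathrm{norm}}_{1,1}\);
\item the fact that \(s_k\) permutes the variables \(X_i\).
\end{enumerate}

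A general summand is then the conjugate \(C_\nu^{-1}(\cdots)C_{\nu'}\) of these even summands. The definition of \(x_i\) inserts \((-1)^{\delta_{i,j}}\). This encodes \(\c_px_q = (-1)^{\delta_{pq}}x_q\c_p\) (RC3), and is consistent with the twisted embedding \(x_ie(\nu)\mapsto (-1)^{\pi_2(\nu_i)}x_i\) used in the affinization. Likewise, \(\tilde s_k\) is conjugated by \(C_\nu\) and \(C_{s_k\nu}\), so the relation \(\tilde s_k\c_p = \c_{s_k(p)}\tilde s_k\) holds by construction.

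Once this is set up, each remaining relation on an arbitrary summand reduces to the same relation on an even summand. The one exception is \(\tilde s_k^2 = R_{k,k+1}\). There \(R_{k,k+1}\) involves the \(\nu_a = c\nu_b\) term with \((x_a+x_b)^{-2}\), and this must be matched with the sign-twisted variable \((-1)^{\pi_2}x\). Under conjugation, \(x_a - x_b\) becomes \(x_a + x_b\) exactly when the parities \(\pi_2(\nu_a)\) and \(\pi_2(\nu_b)\) differ, so this matching should go through.

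The main obstacle I expect is sign bookkeeping. The Koszul signs in \(\c_i\) must be compatible with the reordering of \(\c\)'s in \(C_{s_k\nu}\) versus \(s_k(C_\nu)\), and this compatibility is what makes \(\tilde s_k\c_p=\c_{s_k(p)}\tilde s_k\) hold on the nose rather than up to sign. It also needs \(R^{\mathrm{norm}}_{1,1}\), as an even map, to interact correctly with \(P\otimes 1\) and \(1\otimes P\). I would first test everything for \(n=2\), where all cases can be written explicitly, and only then run the general inductive sign count.
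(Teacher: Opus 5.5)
Your architecture matches the paper's, which also obtains the \(e^{\dagger}(\beta)\mathcal{KS}_{\beta}e^{\dagger}(\beta)\)-action from the Kwon--Lee bimodule and calls the Clifford conjugation routine. Your Clifford relations, the independence of the conjugating monomial, and the sign twist turning \(x_a-x_b\) into \(x_a+x_b\) are all correct. The step that fails is (a) in the case \(\nu_k=\nu_{k+1}\) with all \(\pi_2(\nu_j)=\overline{0}\).

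Let \(\theta_k\) be the \(s_k\)-semilinear map \(f\otimes v\mapsto s_k(f)\otimes(\cdots\otimes R^{\mathrm{norm}}_{1,1}(v_k\otimes v_{k+1})\otimes\cdots)\), and set \(h=x_k-x_{k+1}\). On such a summand the displayed operator is \(T(m)=h^{-1}(\theta_k(m)-m)\). This is not \(\theta_k\) rescaled by a function, so your description of the even action as a rescaled \(\mathbb{K}_{\beta}\rtimes\Bbbk[\mathfrak{S}_n]\)-action does not cover it. Carrying out your direct computation with \(\theta_k^2=1\) (unitarity) and \(s_k(h)=-h\) gives two failures:
\begin{enumerate}[(1)]
\item \(T^2=0\), instead of \(R_{k,k+1}e(\nu)=-h^{-2}e(\nu)\);
\item \(T(x_km)-x_{k+1}T(m)=-m\), so \(x_k\tilde s_k-\tilde s_kx_{k+1}\) acts by \(-1\) instead of \(0\), contradicting \(\tilde s_ka=(s_k.a)\tilde s_k\).
\end{enumerate}
In other words, \(T\) obeys the relations of \(\sigma_k\), namely \(\sigma_k^2e(\nu)=0\) and (RC6). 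It is the action of \(\iota(\sigma_ke(\nu))=(\tilde s_k-h^{-1})e(\nu)\), not of \(\tilde s_k\). Your conjugation then carries the failure to the summands with \(\nu_k=c(\nu_{k+1})\), where you would be matching \(0\) against \(-(x_k+x_{k+1})^{-2}\).

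To close the gap, let \(\tilde s_k\) act on these even summands by \(m\mapsto h^{-1}\theta_k(m)\). Then the displayed operator becomes the action of \(\tilde s_k-h^{-1}=\iota(\sigma_k)\). This is what ``coincides with the right action of \(R_{\beta}\)'' has to mean under \(R_{\beta}\simeq e^{\dagger}(\beta)RC_{\beta}e^{\dagger}(\beta)\xrightarrow{\iota}\mathcal{KS}_{\beta}\).

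With this reading, on every even summand \(\tilde s_k\) is \(\theta_k\) followed by multiplication by a function depending only on \((\nu_k,\nu_{k+1})\), namely \(P_{\pi_1(\nu_k),\pi_1(\nu_{k+1})}\) or \(h^{-1}\). Your facts (a)--(c) then do give \(\tilde s_k^2=R_{k,k+1}\) (e.g.\ \(h^{-1}s_k(h^{-1})=-h^{-2}\)), \(\tilde s_ka=(s_k.a)\tilde s_k\), and the braid relations. The rest of your plan goes through unchanged. The paper's one-line proof is likewise valid only under this corrected reading of the definition.
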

\begin{proof}
   That \(e^{\dagger}(\beta)\mathcal{KS}_{\beta}e^{\dagger}(\beta)\) acts is a consequence of the construction of the Schur--Weyl bimodule for \(\mathcal{U}(\epsilon)\) recalled above.
   Combined with a routine calculation, this shows that the action of \(\mathcal{KS}_{\beta}\) is well defined.
\end{proof}

\begin{rem}
   A direct calculation from the definitions yields
   \[f \otimes (v_1 \otimes \cdots \otimes v_n)e(\nu).e(\nu)x_i = (-1)^{\pi_2(\nu_i)}x_{i}f \otimes (v_1 \otimes \cdots \otimes v_n)e(\nu).\]
\end{rem}

\begin{rem}
   Twisting the action by the antiautomorphism of \(\mathcal{KS}_{\beta}\) yields a left action of \(\mathcal{KS}_{\beta}\) on \(V'^{\otimes \beta}_{\mathbb{K}}\).
   Only the action of the odd generators \(\c_i\) changes:
   \[\c_i e(\nu).f \otimes (v_1 \otimes \cdots \otimes v_n)e(\nu)= (-1)^{\Par(v_{1}) + \cdots + \Par(v_{i-1})}f \otimes (\cdots \otimes P(v_i) \otimes \cdots)e(c_{i}(\nu)).\]
\end{rem}

\begin{thm}\label{thm:construction of bimodule of untwisted version}
   \(V'^{\otimes \beta}_{\mathbb{O}}\) is a right \(RC_{\beta}\)-submodule of \(V'^{\otimes \beta}_{\mathbb{K}}\).
\end{thm}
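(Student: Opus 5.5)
By Lemma \ref{lem:construction of right action of untwisted version}, \(V'^{\otimes \beta}_{\mathbb{K}}\) is already a right \(\mathcal{KS}_{\beta}\)-module. Via the injective homomorphism \(\iota \colon RC_{\beta} \to \mathcal{KS}_{\beta}\) of \cite[Theorem 3.8]{KKT}, it is therefore a right \(RC_{\beta}\)-module. The claim is that the \(\mathbb{O}'_{\beta}\)-lattice \(V'^{\otimes \beta}_{\mathbb{O}}\) is stable under the images of the generators \(e(\nu)\), \(x_i\), \(\c_i\) and \(\sigma_a\), so it suffices to check each generator in turn.

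\textbf{Easy generators.} For \(e(\nu)\) and \(x_i\), stability is immediate: they act by projections and by multiplication by \(\pm x_i \in \mathbb{O}'_{\beta}\) (see the remark following the lemma). For \(\c_i\), the action only applies the odd involution \(P\) to the \(i\)-th tensor factor, up to sign and \(\sqrt{-1}\), and sends \(e(\nu)\) to \(e(c_i(\nu))\). Since \(X'(c(j)) = X'(j)\), we have \(\mathbb{O}'_{\nu} = \mathbb{O}'_{c_i(\nu)}\) as completions, so \(V'^{\otimes\beta}_{\mathbb{O}}e(\nu)\) is carried into \(V'^{\otimes\beta}_{\mathbb{O}}e(c_i(\nu))\).

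\textbf{Reduction for \(\sigma_a\).} The only real work is \(\sigma_a e(\nu)\). I would use the relation \(\sigma_a \c_p = \c_{s_a(p)}\sigma_a\) together with \(\c_p^2 = 1\). Writing \(\c_S = \c_{j_1}\cdots\c_{j_l}\) for \(S = S_\nu\), we get
\[
e(\nu)\sigma_a = \pm\, e(\nu)\,\c_S\,\bigl(e(\nu^{0})\sigma_a\bigr)\,\c_{s_a(S)},
\]
where \(\nu^0 = c_{j_l}\cdots c_{j_1}(\nu)\) has all second coordinates \(0\). Since the \(\c\)'s preserve the lattice, it is enough to show that \(e(\nu^0)\sigma_a\) preserves it.

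Now \(\iota(\sigma_a e(\nu^0))\) is given by the formulas of \cite[Theorem 3.8]{KKT}. For \(\nu^0\), the case \(\nu_a = c(\nu_{a+1})\) cannot occur, because both entries have second coordinate \(0\). Hence \(\iota(\sigma_a e(\nu^0))\) coincides with the image of \(\sigma_a e(\pi_1(\nu^0))\) under the Kang--Kashiwara--Kim embedding \(R_{\beta} \hookrightarrow \mathbb{K}_\beta \rtimes \Bbbk[\mathfrak{S}_n]\). This uses that the two constructions have the same \(\tilde{s}_a\)-action on \(e^{\dagger}\)-components, which holds because the actions were defined to coincide there. Under the identification \(RC^\dagger_\beta \simeq R_\beta\) of Theorem \ref{thm:super morita equivariant for quiver Hecke--Clifford superalgebra}, the subspace \(V'^{\otimes\beta}_{\mathbb{O}}e^{\dagger}(\beta)\) is exactly \(V^{\otimes\beta}_{\mathbb{O}}\) with its \(R_\beta\)-action. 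Theorem \ref{thm:Schur--Weyl subbimod} (\cite[Proposition 5.4]{KL4}) then gives stability.

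\textbf{Main obstacle.} The delicate step is the identification just used. One must verify that the action of \(\tilde{s}_k\) on \(e(\nu^0)\)-components defined before the lemma, including the factor \(P_{\pi_1(\nu_k),\pi_1(\nu_{k+1})}\) and the divided-difference formula when \(\nu_k=\nu_{k+1}\), matches the \(R_\beta\)-action of \cite{KL4} after applying \(\iota\). One must also check that the sign and Clifford conjugation in the definition of \(-.e(\nu)\tilde{s}_{k}\) agree with the conjugation relation above. I would do this by a direct comparison of formulas in the three cases \(\pi_1(\nu_a)\neq\pi_1(\nu_{a+1})\), \(\nu_a=\nu_{a+1}\), and \(\nu_a = c(\nu_{a+1})\) for general \(\nu\). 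For general \(\nu\), the conjugation may turn the second case into the third, and then the term \((x_a+x_{a+1})^{-1}\c_a\c_{a+1}\) must match the \(-(x_a-x_{a+1})^{-1}\) term up to the sign \((-1)^{\pi_2}\) in the \(x\)-action. Once these formulas agree, the pole \((x_a - x_{a+1})^{-1}\) cancels exactly as in \cite{KL4}, and the statement follows.
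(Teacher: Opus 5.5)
Your proposal is correct and follows essentially the same route as the paper. Both arguments use three facts: \(V'^{\otimes\beta}_{\mathbb{O}}\) is closed under the \(\c_i\), because \(P\) is linear and \(\mathbb{O}'_{\nu} = \mathbb{O}'_{c_i(\nu)}\); the \(e^{\dagger}(\beta)\)-part is closed under \(R_{\beta} \simeq RC^{\dagger}_{\beta}\) by \cite[Proposition 5.4]{KL4}; and every element of \(RC_{\beta}\) is built from these two pieces, which you make explicit via \(e(\nu)\sigma_a = \pm\, e(\nu)\c_S\, e(\nu^0)\sigma_a\, \c_{s_a(S)}\).

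The extra formula comparison for general \(\nu\) in your ``main obstacle'' paragraph is not actually needed. That identity holds in \(RC_{\beta}\) itself, and the lemma already gives a right \(\mathcal{KS}_{\beta}\)-module structure. So the only substantive input is matching the \(e^{\dagger}(\beta)\)-block with the action of \cite{KL4}, which the paper also simply asserts.
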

\begin{proof}
   By Theorem \ref{thm:Schur--Weyl subbimod}, \(V'^{\otimes \beta}_{\mathbb{O}}e^{\dagger}(\beta)\) is closed under the action of \(R_{\beta} \simeq e^{\dagger}(\beta)RC_{\beta}e^{\dagger}(\beta)\).
   Since \(P\) is a linear map on \(\mathcal{W}_{1,1}\), the space \(V'^{\otimes \beta}_{\mathbb{O}}\) is closed under the action of \(\c_1,\ldots,\c_n\).
   Every element of \(RC_{\beta}\) acts via compositions involving \(e^{\dagger}(\beta)RC_{\beta}e^{\dagger}(\beta)\) and \(\c_1, \ldots, \c_n\).
   Hence \(V'^{\otimes \beta}_{\mathbb{O}}\) is closed under the action of \(RC_{\beta}\).
\end{proof}

\begin{dfn}\label{dfn:def of Sw bimod for untwisted queer}
   For each \(\beta \in \mathsf{Q}^{+}\), define a subsuperalgebra \(\mathcal{U}_{\beta}(\epsilon)\) of \(\mathcal{U}(\epsilon)\) by
   \[\mathcal{U}_{\beta}(\epsilon) = \{x \in \mathcal{U}(\epsilon) \mid x.(m.a) = (x.m).a~\text{for all}~a \in RC_{\beta}, m \in V'^{\otimes \beta}_{\mathbb{O}}\}.\]
   Let \(\mathcal{U}(\epsilon)^{P}\) denote the subsuperalgebra \(\bigcap_{\beta \in \mathsf{Q^{+}}}\mathcal{U}_{\beta}(\epsilon)\) of \(\mathcal{U}(\epsilon)\). For every \(\beta \in \mathsf{Q}^{+}\), \(V'^{\otimes \beta}_{\mathbb{O}}\) is then a \((\mathcal{U}(\epsilon)^{P}, RC_{\beta})\)-bisupermodule.
   Let \(\mathcal{F}_{\epsilon,\beta}^{P}\) denote the functor \(V'^{\otimes \beta}_{\mathbb{O}} \otimes_{RC_{\beta}} - \colon RC_{\beta} \sgMod \to \mathcal{U}(\epsilon)^{P} \sgMod \),
   and set
   \begin{align*}
      \mathcal{F}_{\epsilon,l}^{P} &= \bigoplus_{\height \beta = l} \mathcal{F}_{\epsilon,\beta}^{P} \colon \bigoplus_{\height \beta = l}RC_{\beta} \sgMod  \to \mathcal{U}(\epsilon)^{P} \sgMod,  \\
      \mathcal{F}_{\epsilon}^{P} &= \bigoplus_{l \geq 0} \mathcal{F}_{\epsilon,l}^{P} \colon \bigoplus_{\beta \in \mathsf{Q}^{+}} RC_{\beta} \sgMod  \to \mathcal{U}(\epsilon)^{P} \sgMod.
   \end{align*}
\end{dfn}

\subsection{Construction of the bimodules \(V^{\mathsf{tw} \otimes \beta}_{\mathbb{O}}\)}\label{subsec:Construction of the bimodules 2}

   For the data \(I = \mathbb{Z}\), consider the direct sum decomposition \(I = I_{\mathsf{even}} = \mathbb{Z}\). Set \(J^{\mathsf{tw}} = J = \mathbb{Z} \times \{0,1\}\), and define a map \(X^{\mathsf{tw}} \colon J^{\mathsf{tw}} \to \Bbbk^{\times}\) by
   \[X^{\mathsf{tw}}(j,\varepsilon) = q^{-(-1)^{\varepsilon}2j}.\]
   This produces the quiver Hecke--Clifford superalgebra \(RC_{\beta}\) of type \(A_{\infty}\).
   For each \(\nu \in J^{\mathsf{tw} \beta}\), the superalgebras \(\mathcal{P}^{\mathsf{tw}}_{\nu}\) and \(\mathbb{O}^{\mathsf{tw}}_{\nu}\) are defined by
   \begin{align*}
      \mathcal{P}^{\mathsf{tw}}_{\nu} &= \langle e(\nu),x_i \mid 1 \leq i \leq n \rangle_{\text{subalg}}, \\
      \mathbb{O}^{\mathsf{tw}}_{\nu} &= \Bbbk[[X_1 - X(\nu_1),\ldots,X_n - X(\nu_n)]]e(\nu).
   \end{align*}
   These are identical to \(\mathcal{P}'_{\nu}\) and \(\mathbb{O}'_{\nu}\).
   In contrast with the case of \(R_{\beta}\), the embedding \(\mathcal{P}^{\mathsf{tw}}_{\nu} \hookrightarrow \mathbb{O}^{\mathsf{tw}}_{\nu}\) (cf. \cite[\(\S 4.5\)]{KKT}) is given by
   \[\begin{array}{ccc}
      \mathcal{P}^{\mathsf{tw}}_{\nu} & \hookrightarrow & \mathbb{O}^{\mathsf{tw}}_{\nu} \\
      \rotatebox{90}{$\in$} & & \rotatebox{90}{$\in$} \\
      x_i e(\nu) & \longmapsto & \frac{X_i - X(\nu_i)}{X_i + X(\nu_i)}e(\nu)\\
   \end{array}\]
   The remaining definitions are analogous.
   Let \(\mathbb{K}^{\mathsf{tw}}_{\nu}\) be the quotient field of \(\mathbb{O}^{\mathsf{tw}}_{\nu}\), and set
   \begin{align*}
      \mathbb{O}^{\mathsf{tw}}_{\beta} &= \bigoplus_{\nu \in J^{\mathsf{tw} \beta}} \mathbb{O}^{\mathsf{tw}}_{\nu},\\
      \mathbb{K}^{\mathsf{tw}}_{\beta} &= \bigoplus_{\nu \in J^{\mathsf{tw} \beta}} \mathbb{K}^{\mathsf{tw}}_{\nu}.
   \end{align*}
   For \(\nu \in J^{\mathsf{tw} \beta}\) and \(1 \leq i \leq n\), set
   \[V_{\nu_{i}} = (\mathcal{W}_{1,\epsilon})_{\text{aff}},\]
   and
   \[V_{\nu} = V_{\nu_1} \otimes \cdots \otimes V_{\nu_n}.\]
   Setting \(K = \Bbbk[X_1^{\pm1},\ldots,X_{n}^{\pm1}]\), we naturally have
   \[V_{\nu} \simeq K \otimes_{\Bbbk} \mathcal{W}_{1,\epsilon}^{\otimes n}.\]
   Define
   \begin{align*}
      V^{\mathsf{tw}\otimes \beta} &= \bigoplus_{\nu \in J^{\mathsf{tw}\beta}}V_{\nu}e(\nu), \\
      V^{\mathsf{tw}\otimes \beta}_{\mathbb{O}} &= \mathbb{O}^{\mathsf{tw}}_{\beta} \otimes_{K} V^{\mathsf{tw} \otimes \beta}, \\
      V^{\mathsf{tw}\otimes \beta}_{\mathbb{K}} &= \mathbb{K}^{\mathsf{tw}}_{\beta} \otimes_{\mathbb{O}^{\mathsf{tw}}_{\beta}} V^{\mathsf{tw} \otimes \beta}_{\mathbb{O}} = \mathbb{K}^{\mathsf{tw}}_{\beta} \otimes_{K} V^{\mathsf{tw} \otimes \beta}.
   \end{align*}

\begin{dfn}
   For \(1 \leq i \leq n\), define a transformation \(T_i\) on \(\mathbb{K}^{\mathsf{tw}}_{\beta}\) by
   \[T_{i}(f(X_1,\ldots,X_n)e(\nu)) = f(X_1,\ldots,X_{i-1},X_{i}^{-1},X_{i+1},\ldots,X_n)e(c_{i}(\nu)).\]
\end{dfn}

\begin{rem}
   Restricting \(T_i\) to \(\mathbb{O}^{\mathsf{tw}}_{\beta}\), we have
   \begin{align*}
      &T_{i}((X_i - X(\nu_i))e(\nu)) \\
      &= (X_i^{-1} - X(\nu_i))e(c_{i}(\nu)) \\
      &= (X_i^{-1} - X((c_{i}(\nu))_i)^{-1})e(c_{i}(\nu)) \\
      &= -\frac{X_i - X((c_{i}(\nu))_i)}{X((c_{i}(\nu))_i) X_{i}}e(c_{i}(\nu)) \\
      &= -\frac{X_i - X((c_{i}(\nu))_i)}{X((c_{i}(\nu))_i)}.\frac{1}{X_i - X((c_{i}(\nu))_i) + X((c_{i}(\nu))_i)}e(c_{i}(\nu)) \\
      &= -\frac{X_i - X((c_{i}(\nu))_i)}{X((c_{i}(\nu))_i)}.(\sum_{j = 0}^{\infty}X((c_{i}(\nu))_i)^{-j-1}(X_i - X((c_{i}(\nu))_i))^j)e(c_{i}(\nu)) \in \mathbb{O}^{\mathsf{tw}}_{c_{i}(\nu)} \subset \mathbb{O}^{\mathsf{tw}}_{\beta},
   \end{align*}
   so that \(T_i\) is well defined as a transformation on \(\mathbb{O}^{\mathsf{tw}}_{\beta}\).
   The same calculation shows that \(T_i\) is well defined on \(\mathbb{K}_{\beta}^{\mathsf{tw}}\) as well.
\end{rem}
   Let \(P \colon \mathcal{W}_{1,\epsilon} \to \mathcal{W}_{1,\epsilon}\) be an odd involution.
   The map \(-.e(\nu)\c_i \colon  V^{\mathsf{tw} \otimes \beta}_{\mathbb{K}}e(\nu) \longrightarrow V^{\mathsf{tw} \otimes \beta}_{\mathbb{K}}e(c_{i}(\nu))\) is given by
   \[f \otimes (v_1 \otimes \cdots \otimes v_n).e(\nu)\c_i = (-1)^{\Par(v_{i}) + \cdots + \Par(v_n)} \sqrt{-1} T_i(fe(\nu)) \otimes (\cdots \otimes P(v_i) \otimes \cdots)e(c_{i}(\nu)).\]
   The maps \(-.e(\nu)x_i \colon V^{\mathsf{tw} \otimes \beta}_{\mathbb{K}}e(\nu) \to V^{\mathsf{tw} \otimes \beta}_{\mathbb{K}}e(\nu)\) and \(-.e(\nu)\tilde{s}_k \colon V^{\mathsf{tw} \otimes \beta}_{\mathbb{K}}e(\nu) \to V^{\mathsf{tw} \otimes \beta}_{\mathbb{K}}e(s_{k}(\nu))\) are defined in analogy with Lemma \ref{lem:construction of right action of untwisted version}.
   When \(\pi_2(\nu_j) = \overline{0}\) for all \(j\), the maps are given by
   \begin{align*}
      f \otimes (v_1 \otimes \cdots \otimes v_n).e(\nu)x_i &= f x_i \otimes (v_1 \otimes \cdots \otimes v_n)e(\nu), \\
      f \otimes (v_1 \otimes \cdots \otimes v_n).e(\nu)\tilde{s}_k &= \begin{cases}
         s_{k}(f)P_{\pi_{1}(\nu_k),\pi_{1}(\nu_{k+1})}(x_k,x_{k+1}) & \\
         \otimes (\cdots \otimes R_{1,1}^{\mathrm{norm}}(v_{k} \otimes v_{k+1}) \otimes \cdots)e(s_{k}(\nu)) & \text{if $\nu_k \neq \nu_{k+1},$}\\
         (s_{k}(f) \otimes (\cdots \otimes R_{1,1}^{\mathrm{norm}}(v_{k} \otimes v_{k+1}) \otimes \cdots) & \\
         - f \otimes (v_1 \otimes \cdots \otimes v_n))/(x_k - x_{k+1})e(\nu) & \text{if $\nu_k = \nu_{k+1}.$}
      \end{cases}
   \end{align*}
   If there exists \(j\) with \(\pi_2(\nu_j) = \overline{1}\), let \(S_\nu\) denote the set of indices \(j\) satisfying \(\pi_2(\nu_j) = \overline{1}\), indexed by the ordering as
   \[S_{\nu} = \{j_1 < \cdots < j_l\}.\]
   The maps are then defined by
   \begin{align*}
      -.e(\nu)x_i &= -.e(\nu)\c_{j_1} \cdots \c_{j_l}.x_i.(-1)^{\delta_{i,j_l}}\c_{j_l} \cdots (-1)^{\delta_{i,j_1}}\c_{j_1}, \\
      -.e(\nu)\tilde{s}_{j_k} &= -.e(\nu)\c_{j_1} \cdots \c_{j_l}. \tilde{s}_{j_k} .\c_{s_{k}(j_l)} \cdots \c_{s_{k}(j_1)}.
   \end{align*}
\begin{lem}
   The maps defined above endow \(V^{\mathsf{tw} \otimes \beta}_{\mathbb{K}}\) with a right \(\mathcal{KS}_{\beta}\)-module structure.
\end{lem}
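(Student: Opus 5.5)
The plan is to verify the defining relations of \(\mathcal{KS}_{\beta}\) on \(V^{\mathsf{tw}\otimes\beta}_{\mathbb{K}}\) in three layers, as in the proof of Lemma \ref{lem:construction of right action of untwisted version}. The first layer is the subspace \(V^{\mathsf{tw}\otimes\beta}_{\mathbb{K}}e^{\dagger}(\beta)\), where all \(\pi_2(\nu_j)=\overline{0}\) and \(X^{\mathsf{tw}}\) agrees with \(X\). There the formulas for \(x_i\) and \(\tilde{s}_k\) are exactly the Kwon--Lee action transported along the isomorphism \(R_{\beta}\simeq RC^{\dagger}_{\beta}\) of Theorem \ref{thm:super morita equivariant for quiver Hecke--Clifford superalgebra}. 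So the relations of \(e^{\dagger}(\beta)\mathcal{KS}_{\beta}e^{\dagger}(\beta)\) hold by the construction underlying Theorem \ref{thm:Schur--Weyl subbimod}: commutativity of the \(x_i\), the braid relations, \(\tilde{s}_k^2=R_{k,k+1}\), and \(\tilde{s}_k a=(s_k.a)\tilde{s}_k\). The one point needing care is that the change of variables \(x_i\mapsto (X_i-X(\nu_i))/(X_i+X(\nu_i))\) differs from the untwisted one. Since the \(\tilde{s}_k\)-relations only involve \(s_k\) permuting variables and \(R_{1,1}^{\mathrm{norm}}\), any \(\mathfrak{S}_n\)-equivariant identification of \(\mathcal{P}_\nu\) inside \(\mathbb{K}_\nu\) gives the same verification. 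I would check that the factors \(P_{\pi_1(\nu_k),\pi_1(\nu_{k+1})}\) and \(R_{k,k+1}\) still match under this substitution.

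The second layer is the Clifford part. I would show that the operators \(-.e(\nu)\c_i\) satisfy \(\c_p\c_q+\c_q\c_p=2\delta_{pq}\), with the sign conventions of the right super-action.
\begin{itemize}
\item For \(p\neq q\): the sign \((-1)^{\Par(v_i)+\cdots+\Par(v_n)}\) changes by \(-1\) because \(P\) is odd, and the two \(T\)'s commute. This gives anticommutation.
\item For \(p=q\): \(T_i^2=\id\) and \(P^2=1\). The factor \((\sqrt{-1})^2=-1\) combines with the sign \(-1\) from the parity flip of \(v_i\), giving \(\c_i^2=1\).
\end{itemize}
Next, the operators \(x_i\) and \(\tilde{s}_k\) on a general block \(e(\nu)\) are defined by conjugating with \(\c_{j_1}\cdots\c_{j_l}\) into the \(e^{\dagger}\)-block. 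Hence the relations of \(\mathcal{K}_\beta\) and the braid and quadratic relations hold on every block, because they hold on the \(e^{\dagger}\)-block. What remains is to check that the conjugated \(x_i\) on \(e(\nu)\) acts as \((-1)^{\pi_2(\nu_i)}x_i\), and that \(\c_p x_q=(-1)^{\delta_{pq}}x_q\c_p\).

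The third layer, which I expect to be the main obstacle, is exactly this mixed relation, because \(T_i\) now acts on scalars. Its effect is \(X_i\mapsto X_i^{-1}\) together with \(X(\nu_i)\mapsto X(c(\nu_i))=X(\nu_i)^{-1}\). I would compute that
\[T_i\Big(\tfrac{X_i-X(\nu_i)}{X_i+X(\nu_i)}\Big)=\tfrac{X_i^{-1}-X(\nu_i)}{X_i^{-1}+X(\nu_i)}=-\tfrac{X_i-X(c(\nu_i))}{X_i+X(c(\nu_i))},\]
so \(T_i\) sends \(x_ie(\nu)\) to \(-x_ie(c_i(\nu))\) and fixes the \(x_q\) with \(q\neq i\). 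This is precisely the reason the Cayley-type embedding is used, and it yields \(\c_p x_q=(-1)^{\delta_{pq}}x_q\c_p\).

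Finally, I would check \(\tilde{s}_k\c_p=\c_{s_k(p)}\tilde{s}_k\). The definition of \(\tilde{s}_{j_k}\) through \(\c_{s_k(j_l)}\cdots\c_{s_k(j_1)}\) reduces this to two facts. The first is that \(T_p\) intertwines \(s_k\) on scalars as \(s_kT_p=T_{s_k(p)}s_k\). The second is that \(P\otimes 1\) and \(1\otimes P\), with Koszul signs, are carried into each other by \(R_{1,1}^{\mathrm{norm}}\) up to the change \(z\mapsto z^{\pm1}\) induced by \(T\). In the twisted case the second fact should follow from \(J\otimes s\) being a module morphism, as recalled in \S\ref{subsec:Odd involutions on the affinization of vector representations}. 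I would verify it directly on the basis \(|\mathbf{e}_i\rangle\otimes|\mathbf{e}_j\rangle\) using the explicit formula for \(R^{\mathrm{norm}}_{1,1}(z)\) in Remark \ref{rem:computation of normalized R-matrix}.
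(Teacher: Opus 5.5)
Your first two layers are sound and match the paper's terse argument (``the \(e^{\dagger}\)-block acts by the Kwon--Lee construction, plus a routine verification''). One caveat there: for \(\nu_k=\nu_{k+1}\) the displayed operator is really \(\sigma_k\), which squares to \(0\), so \(\tilde{s}_k=\sigma_k+(x_k-x_{k+1})^{-1}\) has to be recovered through \(\iota\). Your check of the Clifford relations for the \(\c_i\) is correct.

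The third layer, however, contains two steps that would fail.

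\emph{The sign of \(x_i\).} In the twisted case the conjugated \(x_i\) does not act by \((-1)^{\pi_2(\nu_i)}x_i\). The sign \((-1)^{\pi_2(\nu_i)}=\prod_m(-1)^{\delta_{i,j_m}}\) built into its definition cancels against \(T_i(x_ie(\mu))=-x_ie(c_i(\mu))\). So \(x_i\) acts on every block \(e(\nu)\) as multiplication by \(+\frac{X_i-X^{\mathsf{tw}}(\nu_i)}{X_i+X^{\mathsf{tw}}(\nu_i)}\); already for \(n=1\), \(\nu=(a,1)\), one gets \(+\frac{X_1-q^{2a}}{X_1+q^{2a}}\). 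Your sign is right in the untwisted case, where \(\c_i\) does not touch scalars. Here, combined with your \(T_i\)-computation, it would give \(\c_ix_i=x_i\c_i\), the wrong relation.

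\emph{The \(R\)-matrix compatibility.} For \(\tilde{s}_k\c_p=\c_{s_k(p)}\tilde{s}_k\) you invoke that \(R^{\mathrm{norm}}_{1,1}\) carries \(P\otimes 1\) to \(1\otimes P\) up to \(z\mapsto z^{\pm1}\). This is false for a general odd involution, and \(P\) is arbitrary here. Take \(P\) exchanging \(|\mathbf{e}_i\rangle\) and \(|\mathbf{e}_{M+i}\rangle\). Then \(|\mathbf{e}_1\rangle\otimes|\mathbf{e}_1\rangle\) is an eigenvector of \(R^{\mathrm{norm}}_{1,1}(z)\), whereas for generic \(w\) the map \(R^{\mathrm{norm}}_{1,1}(w)\) sends each of \(|\mathbf{e}_1\rangle\otimes|\mathbf{e}_{M+1}\rangle\) and \(|\mathbf{e}_{M+1}\rangle\otimes|\mathbf{e}_1\rangle\) to a combination of both with nonzero coefficients. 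The fact that \(J\otimes s\) is a \(U_q(\mathcal{L}_{\mathsf{tw}}\qq_n)\)-morphism on \(\Bbbk^{n|n}\otimes\Bbbk[z^{\pm1}]\) says nothing about \(R^{\mathrm{norm}}_{1,1}\) of \(\mathcal{U}(\epsilon)\).

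Neither fact is needed: with the conjugation definitions, both mixed relations follow from the Clifford relations alone. Put \(C_\nu=\c_{j_1}\cdots\c_{j_l}\) and \(C'_\nu=\c_{j_l}\cdots\c_{j_1}\), and let \(s_k\) act on \(\mathcal{C}_n\) by \(s_k(\c_r)=\c_{s_k(r)}\). Reading products as right actions, on \(e(\nu)\) one has \(x_i=(-1)^{\pi_2(\nu_i)}C_\nu x_iC'_\nu\) and \(\tilde{s}_k=C_\nu\tilde{s}_k\,s_k(C'_\nu)\), with the middle factors acting on the \(e^{\dagger}\)-block. In \(\mathcal{C}_n\) one has \(\c_rC_{c_r(\nu)}=\epsilon_rC_\nu\) and \(C'_\nu\c_r=\epsilon_rC'_{c_r(\nu)}\) with the same sign \(\epsilon_r\). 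Taking \(r=s_k(p)\) and applying \(s_k\) to the second identity gives, on \(e(\nu)\),
\[\c_{s_k(p)}\tilde{s}_k=\c_{s_k(p)}C_{c_{s_k(p)}(\nu)}\,\tilde{s}_k\,s_k\bigl(C'_{c_{s_k(p)}(\nu)}\bigr)=C_\nu\,\tilde{s}_k\,s_k(C'_\nu)\,\c_p=\tilde{s}_k\c_p.\]
In the same way \(\c_px_q=(-1)^{\delta_{pq}}x_q\c_p\), since \(\pi_2((c_p(\nu))_q)\) and \(\pi_2(\nu_q)\) differ by \(\delta_{pq}\).

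Conceptually, the structure is
\[V^{\mathsf{tw}\otimes\beta}_{\mathbb{K}}e^{\dagger}(\beta)\otimes_{e^{\dagger}(\beta)\mathcal{KS}_{\beta}e^{\dagger}(\beta)}e^{\dagger}(\beta)\mathcal{KS}_{\beta},\]
transported by \(w\otimes C'_\nu\mapsto w.C'_\nu\), using that \(e^{\dagger}(\beta)\) is a full idempotent of \(\mathcal{KS}_\beta\). So your first two layers are the whole proof, and nothing about \(P\) beyond oddness and \(P^2=1\) enters. \(T_i\) matters only for identifying the \(x_i\)-action with multiplication by an element of \(\mathbb{O}^{\mathsf{tw}}_\beta\); Theorem \ref{thm:construction of bimodule of twisted version} needs that, but this lemma does not.
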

The claim follows by a direct verification of the relations of \(\mathcal{KS}_{\beta}\).

\begin{rem}
   The relation between \(T_i\) and \(x_{i}e(\nu)\) reads
   \begin{align*}
      T_{i}(x_{i}e(\nu)) &= T_{i}(\frac{X_i - X(\nu_i)}{X_i + X(\nu_i)}e(\nu)) \\
      &= \frac{X_i^{-1} - X(\nu_i)}{X_i^{-1}+ X(\nu_i)}e(c_{i}(\nu)) \\
      &= \frac{X_i^{-1} - X((c_{i}(\nu))_i)^{-1}}{X_i^{-1}+ X((c_{i}(\nu))_i)^{-1}}e(c_{i}(\nu)) \\
      &= -\frac{X_i - X((c_{i}(\nu))_i)}{X_i+ X((c_{i}(\nu))_i)}e(c_{i}(\nu)) \\
      &= -x_{i}e(c_i(\nu)).
   \end{align*}
   Consequently,
   \[f \otimes (v_1 \otimes \cdots \otimes v_n)e(\nu).e(\nu)x_i = (-1)^{\pi_2(\nu_i)}x_{i}f \otimes (v_1 \otimes \cdots \otimes v_n)e(\nu)\]
   for every \(\nu\).
\end{rem}

\begin{rem}
   Twisting the action by the antiautomorphism of \(\mathcal{KS}_{\beta}\) yields a left action of \(\mathcal{KS}_{\beta}\) on \(V'^{\otimes \beta}_{\mathbb{K}}\).
   Only the action of the odd generators \(\c_i\) changes:
   \[\c_i e(\nu).f \otimes (v_1 \otimes \cdots \otimes v_n)e(\nu)= (-1)^{\Par(v_{1}) + \cdots + \Par(v_{i-1})} T_{i}(fe(\nu)) \otimes (v_1 \otimes \cdots \otimes P(v_i) \otimes \cdots \otimes v_n).\]
\end{rem}

\begin{thm}\label{thm:construction of bimodule of twisted version}
   \(V^{\mathsf{tw} \otimes \beta}_{\mathbb{O}}\) is a right \(RC_{\beta}\)-submodule of \(V^{\mathsf{tw} \otimes \beta}_{\mathbb{K}}\).
\end{thm}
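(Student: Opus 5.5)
We will follow the proof of Theorem \ref{thm:construction of bimodule of untwisted version}. The new feature in the twisted setting is that the Clifford generators act through the substitution \(T_i\), and not only through \(P\). The first step is to record that \(RC_{\beta}\) is generated, as an algebra, by the corner \(e^{\dagger}(\beta)RC_{\beta}e^{\dagger}(\beta) \simeq R_{\beta}\) (Theorem \ref{thm:super morita equivariant for quiver Hecke--Clifford superalgebra}) together with \(\c_1,\ldots,\c_n\) and the idempotents \(e(\nu)\). Indeed, since \(\c_i^2 = 1\) and \(\c_p e(\nu) = e(c_p(\nu))\c_p\), every generator \(x_k e(\nu)\) or \(\sigma_a e(\nu)\) can be written as \(\c_{j_1}\cdots\c_{j_l}\,y\,\c_{j'_l}\cdots\c_{j'_1}\) with \(y \in e^{\dagger}RC_{\beta}e^{\dagger}\). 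These are precisely the formulas used to define \(-.e(\nu)x_i\) and \(-.e(\nu)\tilde s_{j_k}\) on \(V^{\mathsf{tw}\otimes\beta}_{\mathbb{K}}\). So it is enough to check two things: that \(V^{\mathsf{tw}\otimes\beta}_{\mathbb{O}}\) is stable under each \(\c_i\), and that \(V^{\mathsf{tw}\otimes\beta}_{\mathbb{O}}e^{\dagger}(\beta)\) is stable under the corner algebra.

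Stability under \(\c_i\) follows from the remark preceding the definition of the \(\c_i\)-action. That computation shows \(T_i(\mathbb{O}^{\mathsf{tw}}_{\nu}) \subset \mathbb{O}^{\mathsf{tw}}_{c_i(\nu)}\): the key point is that \(X_i^{-1}\) expands as a power series in \(X_i - X(c_i(\nu)_i)\), because \(X(c_i(\nu)_i) = X(\nu_i)^{-1}\) is nonzero. The factor \(P\) acts only on the finite-dimensional tensor factor, and the sign and \(\sqrt{-1}\) are scalars. Hence \(-.e(\nu)\c_i\) maps \(V^{\mathsf{tw}\otimes\beta}_{\mathbb{O}}e(\nu)\) into \(V^{\mathsf{tw}\otimes\beta}_{\mathbb{O}}e(c_i(\nu))\).

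The main obstacle is the corner. Here the embedding \(\mathcal P^{\mathsf{tw}}_\nu\hookrightarrow\mathbb{O}^{\mathsf{tw}}_\nu\) is \(x_i\mapsto (X_i-X(\nu_i))/(X_i+X(\nu_i))\), not the embedding of \cite{KL4}, so Theorem \ref{thm:Schur--Weyl subbimod} cannot be quoted literally. On \(e^{\dagger}(\beta)\) we have \(X^{\mathsf{tw}}(j,0)=q^{-2j}=X(j)\), so the completed rings coincide with \(\mathbb{O}_{\beta}\). I would therefore verify directly that the relevant operators preserve \(\mathbb{O}\).
\begin{enumerate}[(i)]
\item The image of \(x_i\) is \((X_i-X(\nu_i))(X_i+X(\nu_i))^{-1}\), and \((X_i+X(\nu_i))^{-1}\) is a unit in \(\mathbb{O}_\nu\) since \(2X(\nu_i)\neq 0\).
\item For \(\tilde s_k\) with \(\nu_k\neq\nu_{k+1}\), the only possible denominator is a pole of \(R^{\mathrm{norm}}_{1,1}(X_k/X_{k+1})\) at \(X_k/X_{k+1}=q^2\). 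Following the argument of \cite[Proposition 5.4]{KL4}, this pole is cancelled by the factor \(\tilde{\mathcal Q}(x_k,x_{k+1})\), which is now a unit multiple of \(X_k-q^2X_{k+1}\).
\item For \(\nu_k=\nu_{k+1}\), the operator \(\bigl(s_k(f)R^{\mathrm{norm}}_{1,1}-f\bigr)/(x_k-x_{k+1})\) preserves \(\mathbb{O}\). This holds because \(R^{\mathrm{norm}}_{1,1}(1)\) is the supertwist, so the numerator vanishes on \(X_k=X_{k+1}\), and \(x_k-x_{k+1}\) equals \((X_k-X_{k+1})\) times a unit.
\end{enumerate}
In each case the change of embedding modifies things only by units of \(\mathbb{O}_\nu\), so the argument of \cite{KL4} carries over.

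Once these steps are in place, the conjugation formulas defining the action on components with \(S_\nu\neq\varnothing\) express every generator of \(RC_\beta\) as a composition of \(\c\)'s and corner operators. Each of these preserves \(V^{\mathsf{tw}\otimes\beta}_{\mathbb{O}}\), which proves the theorem.
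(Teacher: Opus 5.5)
Your argument has the same structure as the paper's proof, which just declares the twisted case analogous to Theorem \ref{thm:construction of bimodule of untwisted version}. The corner \(e^{\dagger}(\beta)RC_{\beta}e^{\dagger}(\beta)\simeq R_\beta\) preserves \(V^{\mathsf{tw}\otimes\beta}_{\mathbb{O}}e^{\dagger}(\beta)\). The \(\c_i\) preserve \(V^{\mathsf{tw}\otimes\beta}_{\mathbb{O}}\), here because \(T_i(\mathbb{O}^{\mathsf{tw}}_\nu)\subset\mathbb{O}^{\mathsf{tw}}_{c_i(\nu)}\). These together generate \(RC_\beta\).

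You also observe that Theorem \ref{thm:Schur--Weyl subbimod} cannot be quoted verbatim on the corner, since \(x_i\) is now embedded as \((X_i-X(\nu_i))/(X_i+X(\nu_i))\). The paper passes over this point, and your unit argument handles it correctly. Explicitly, in \(\mathbb{O}^{\mathsf{tw}}_\nu\) one has \(x_k-x_{k+1}=2\bigl(X(\nu_{k+1})X_k-X(\nu_k)X_{k+1}\bigr)/\bigl((X_k+X(\nu_k))(X_{k+1}+X(\nu_{k+1}))\bigr)\), which is a unit times \(X_k-\tfrac{X(\nu_k)}{X(\nu_{k+1})}X_{k+1}\).

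There is one incorrect step, in (iii). You claim the numerator vanishes on \(X_k=X_{k+1}\) because \(R^{\mathrm{norm}}_{1,1}(1)\) is the supertwist. It is not the supertwist. By Remark \ref{rem:computation of normalized R-matrix}, \(R_{1,1}(1)=\mathcal{P}^{1,1}_0+\mathcal{P}^{1,1}_1\) is the identity of \(\mathcal{W}_{1,\epsilon}\otimes\mathcal{W}_{1,\epsilon}\); the explicit matrix coefficients at \(z=1\) confirm this.

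The identity is exactly what the step needs. Since \(s_k(f)\) and \(f\) agree on the diagonal, the numerator there equals \(f|_{X_k=X_{k+1}}\) times \((R(1)-\mathrm{id})(v_k\otimes v_{k+1})\) in positions \(k,k+1\), which is zero. If \(R(1)\) were the supertwist, the numerator would restrict to \(f|_{X_k=X_{k+1}}\bigl((-1)^{\Par(v_k)\Par(v_{k+1})}v_{k+1}\otimes v_k-v_k\otimes v_{k+1}\bigr)\). That is generally nonzero, and the divided difference would leave \(\mathbb{O}\).

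So the conclusion of (iii) is true, but for the opposite reason from the one you give. With that fix the proof goes through.
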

\begin{proof}
   The proof is analogous to that of Theorem \ref{thm:construction of bimodule of untwisted version}.
\end{proof}

\begin{dfn}\label{dfn:def of Sw bimod for twisted queer}
   For each \(\beta \in \mathsf{Q}^{+}\), define a subsuperalgebra \(\mathcal{U}_{\beta}^{\mathsf{tw}}(\epsilon)\) of \(\mathcal{U}(\epsilon)\) by
   \[\mathcal{U}_{\beta}^{\mathsf{tw}}(\epsilon) = \{x \in \mathcal{U}(\epsilon) \mid x.(m.a) = (x.m).a~\text{for all}~a \in RC_{\beta}, m \in V^{\mathsf{tw} \otimes \beta}_{\mathbb{O}}\}.\]
   Let \(\mathcal{U}^{\mathsf{tw}}(\epsilon)^{P}\) denote the subsuperalgebra \(\bigcap_{\beta \in \mathsf{Q^{+}}}\mathcal{U}_{\beta}^{\mathsf{tw}}(\epsilon)\) of \(\mathcal{U}(\epsilon)\). For every \(\beta \in \mathsf{Q}^{+}\), \(V^{\mathsf{tw} \otimes \beta}_{\mathbb{O}}\) is then a \((\mathcal{U}^{\mathsf{tw}}(\epsilon)^{P}, RC_{\beta})\)-bisupermodule.
   Let \(\mathcal{F}_{\epsilon,\beta}^{\mathsf{tw} P}\) denote the functor \(V^{\mathsf{tw} \otimes \beta}_{\mathbb{O}} \otimes_{RC_{\beta}} - \colon RC_{\beta} \sgMod \to \mathcal{U}^{\mathsf{tw}}(\epsilon)^{P} \sgMod \),
   and set
   \begin{align*}
      \mathcal{F}_{\epsilon,n}^{\mathsf{tw} P} &= \bigoplus_{\height \beta = n} \mathcal{F}_{\epsilon,\beta}^{\mathsf{tw} P} \colon \bigoplus_{\height \beta = n}RC_{\beta} \sgMod  \to \mathcal{U}^{\mathsf{tw}}(\epsilon)^{P} \sgMod,  \\
      \mathcal{F}_{\epsilon}^{\mathsf{tw} P} &= \bigoplus_{n \geq 0} \mathcal{F}_{\epsilon,n}^{\mathsf{tw} P} \colon \bigoplus_{\beta \in \mathsf{Q}^{+}} RC_{\beta} \sgMod  \to \mathcal{U}^{\mathsf{tw}}(\epsilon)^{P} \sgMod.
   \end{align*}
\end{dfn}

\subsection{Properties of the Schur--Weyl functors}\label{subsec:Properties of the Schur--Weyl functors}

\begin{prop}[cf. {\cite[Theorem 3.4]{KKK}}]
   The functors \(\mathcal{F}_{\epsilon,\beta}^{P}\) and \(\mathcal{F}_{\epsilon,\beta}^{\mathsf{tw} P}\) restrict to functors
   \begin{align*}
      \mathcal{F}_{\epsilon,\beta}^{P} &\colon RC_{\beta} \sgmod  \to \mathcal{U}(\epsilon)^{P} \sgmod,  \\
      \mathcal{F}_{\epsilon,\beta}^{\mathsf{tw} P} &\colon RC_{\beta} \sgmod  \to \mathcal{U}^{\mathsf{tw}}(\epsilon)^{P} \sgmod;
   \end{align*}
   that is, both \(\mathcal{F}_{\epsilon,\beta}^{P}\) and \(\mathcal{F}_{\epsilon,\beta}^{\mathsf{tw} P}\) send finite-dimensional modules to finite-dimensional modules.
\end{prop}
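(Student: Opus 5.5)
Our plan is to reduce to the quiver Hecke algebra case of \cite[Theorem 3.4]{KKK}/\cite{KL4} via the Morita superequivalence of Theorem \ref{thm:super morita equivariant for quiver Hecke--Clifford superalgebra}. For \(M \in RC_{\beta} \sgmod\), the idempotent \(e^{\dagger}(\beta)\) is full (\(RC_{\beta}e^{\dagger}(\beta)RC_{\beta} = RC_{\beta}\)), so by Example \ref{ex:full even idempotent} there is a natural isomorphism \(M \simeq RC_{\beta}e^{\dagger}(\beta) \otimes_{RC^{\dagger}_{\beta}} e^{\dagger}(\beta)M\). Hence
\[V'^{\otimes \beta}_{\mathbb{O}} \otimes_{RC_{\beta}} M \simeq V'^{\otimes \beta}_{\mathbb{O}}e^{\dagger}(\beta) \otimes_{RC^{\dagger}_{\beta}} F_{\beta}(M)\]
as super vector spaces. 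We only need finite-dimensionality, so the \(\mathcal{U}(\epsilon)^{P}\)-action plays no role; it suffices to bound the dimension of the right-hand side.

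Next we identify \(V'^{\otimes \beta}_{\mathbb{O}}e^{\dagger}(\beta)\) as a right module over \(RC^{\dagger}_{\beta} \simeq R_{\beta}\). By construction, \(V'^{\otimes\beta}_{\mathbb{O}}e^{\dagger}(\beta) = \bigoplus_{\nu} \mathbb{O}'_{\nu^{\dagger}} \otimes_K V_{\nu^{\dagger}}\) with \(\nu\) running over \(I^{\beta}\), and on these summands the \(x_i\) and \(\tilde{s}_k\) act by exactly the formulas defining the Kwon--Lee bimodule (the case \(\pi_2(\nu_j) = \overline{0}\) for all \(j\)). Thus, as a right \(R_{\beta}\)-module, it is isomorphic to \(V^{\otimes\beta}_{\mathbb{O}}\) of Theorem \ref{thm:Schur--Weyl subbimod}. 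We also need \(F_{\beta}(M) \in R_{\beta}\sgmod\), i.e. it is a direct sum \(M_{\overline{0}}\oplus M_{\overline{1}}\) of two finite-dimensional graded \(R_{\beta}\)-modules. Therefore
\[\mathcal{F}^{P}_{\epsilon,\beta}(M) \simeq \mathcal{F}_{\epsilon,\beta}(F_{\beta}(M)_{\overline{0}}) \oplus \mathcal{F}_{\epsilon,\beta}(F_{\beta}(M)_{\overline{1}})\]
as vector spaces. By \cite[Theorem 5.5]{KL4}, whose proof follows \cite[Theorem 3.4]{KKK}, each summand lies in \(\mathcal{C}(\epsilon)\) and so is finite-dimensional.

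The twisted case follows the same pattern. The only change is that \(\mathbb{O}^{\mathsf{tw}}_{\nu}\) embeds \(x_i\) via \((X_i - X(\nu_i))/(X_i + X(\nu_i))\) rather than \((X_i - X(\nu_i))/X(\nu_i)\). On \(e^{\dagger}(\beta)\) this is still a change of formal coordinate of the form \(x_i = u\cdot(X_i - X(\nu_i))\) with \(u\) a unit in \(\mathbb{O}^{\mathsf{tw}}_{\nu}\). Hence \(\mathbb{O}^{\mathsf{tw}}_{\nu}\) is the \(x\)-adic completion of \(\mathcal{P}^{\mathsf{tw}}_{\nu}\), and the KKK argument goes through verbatim.

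The step we expect to be the main obstacle is this twisted case, if one wants to avoid quoting KL4 as a black box. There we would rerun the KKK argument directly: \(V_{\mathbb{O}} \otimes_{R_{\beta}} N\) for finite-dimensional \(N\) is killed by a power of the ideal \((x_1,\ldots,x_n)\) of \(\mathbb{O}\). Indeed, \(N\) is annihilated by a power of each \(x_i\), since the \(x_i\) act nilpotently on finite-dimensional graded modules because they have positive degree. Thus \(V_{\mathbb{O}} \otimes_{R_{\beta}} N\) is a quotient of \(\bigl(V_{\mathbb{O}}/(x)^{m}V_{\mathbb{O}}\bigr) \otimes_{\Bbbk} N\), and \(V_{\mathbb{O}}/(x)^m V_{\mathbb{O}}\) is finite-dimensional: it is a finitely generated \(\mathbb{O}/(x)^m\)-module, \(V_{\nu}\) being free of finite rank over \(K\). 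This argument is insensitive to the choice of embedding \(\mathcal{P}\hookrightarrow\mathbb{O}\), so it covers both functors simultaneously and could even replace the Morita reduction.
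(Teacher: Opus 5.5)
Your proof is correct, but its main line is not the paper's.

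\textbf{The paper's route.} The paper never passes through the Morita equivalence. It takes the surjection \(V'^{\otimes\beta}_{\mathbb{O}} \otimes_{\mathcal{P}'_{\beta}} W \twoheadrightarrow V'^{\otimes\beta}_{\mathbb{O}} \otimes_{RC_{\beta}} W\) and rewrites the source as \(\bigoplus_{\nu \in J^{\beta}} \mathcal{W}_{1,\epsilon}^{\otimes n} \otimes \mathbb{O}'_{\nu} \otimes_{\mathcal{P}'_{\nu}} W\). It then kills \((X_i - X(\nu_i))^{k_i} \otimes w\) by moving it across the tensor sign as a scalar multiple of \(x_i^{k_i}\), which annihilates \(W\) for degree reasons. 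The twisted case differs only by the unit \((X_i + X(\nu_i))^{-1}\). This is precisely the argument of your last paragraph, run directly over \(RC_{\beta}\) on all summands \(\nu \in J^{\beta}\). The sign \((-1)^{\pi_2(\nu_i)}\) in the right action of \(x_i\) is harmless.

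\textbf{Your primary route.} You instead identify \(V'^{\otimes\beta}_{\mathbb{O}}e^{\dagger}(\beta)\) with the Kwon--Lee bimodule (via \(X'(j,0) = X(j)\)) and quote \cite[Theorem 5.5]{KL4}. This gives more than the statement. Naturally in \(M\), it expresses \(\mathcal{F}^{P}_{\epsilon,\beta}(M)\) through Kwon--Lee's functor applied to the two parity components of \(F_{\beta}(M)\). For the untwisted functor, that would also give exactness essentially for free, bypassing the flatness-lemma verification the paper carries out later.

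\textbf{Costs of your route.} First, it relies on \cite[Theorem 5.5]{KL4} as a black box, and for \(M = N\) the paper covers that case only through its remark citing \cite{KOS,KY}. Second, it does not reach the twisted functor. There the embedding \(\mathcal{P}^{\mathsf{tw}}_{\nu} \hookrightarrow \mathbb{O}^{\mathsf{tw}}_{\nu}\) differs from Kwon--Lee's, so the \(e^{\dagger}(\beta)\)-part is not literally their bimodule. That is exactly why you needed the direct argument anyway.

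For this particular statement, the direct argument is the more economical proof. It is uniform in both functors and independent of the Morita reduction.
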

\begin{proof}
   We treat the case of \(\mathcal{F}_{\epsilon,\beta}^{P}\).
   Define the superalgebra \(\mathcal{P}'_{\beta}\) by
   \begin{align*}
      \mathcal{P}'_{\beta} =  \langle e(\nu), x_i \in RC_{\beta} \mid \nu \in J^{\beta},~ 1 \leq i \leq n \rangle  \hookrightarrow \mathbb{O}'_{\beta}.
   \end{align*}
   For \(W \in RC_{\beta} \sgmod \), the surjection
   \[V_{\mathbb{O}}'^{\otimes \beta} \otimes_{\mathcal{P}'_{\beta}} W \twoheadrightarrow V_{\mathbb{O}}'^{\otimes \beta} \otimes_{RC_{\beta}} W = \mathcal{F}_{\epsilon,\beta}^{P}(W)\]
   reduces the problem to showing that \(V_{\mathbb{O}}'^{\otimes \beta} \otimes_{\mathcal{P}'_{\beta}} W\) is finite-dimensional.
   This can be rewritten as
   \begin{align*}
      V_{\mathbb{O}}'^{\otimes \beta} \otimes_{\mathcal{P}'_{\beta}} W &\simeq \bigoplus_{\nu \in J^{\beta}} (\mathbb{O}'_{\nu} \otimes_{K} K \otimes_{\Bbbk} \mathcal{W}_{1,\epsilon}^{\otimes n}) \otimes_{\mathcal{P}'_{\nu}} W \\
      &\simeq \bigoplus_{\nu \in J^{\beta}} \mathcal{W}_{1,\epsilon}^{\otimes n} \otimes_{\Bbbk} \mathbb{O}'_{\nu} \otimes_{\mathcal{P}'_{\nu}} W.
   \end{align*}
   Since \(W\) is \(\mathbb{Z}\)-graded and finite-dimensional, \(W_k = 0\) for sufficiently large \(k \in \mathbb{Z}\).
   As \(x_i e(\nu) \in \mathcal{P}'_{\nu}\) has \(\mathbb{Z}\)-degree \(2\), every polynomial \(f \in \Bbbk[x_1,\ldots,x_n] \subset \mathcal{P}'_{\beta}\) of sufficiently high degree satisfies
   \[f.V = 0.\]
   Likewise, for sufficiently large \(k_i \in \mathbb{Z}\),
   \begin{align*}
      \bigoplus_{\nu \in J^{\beta}} \mathcal{W}_{1,\epsilon}^{\otimes n} \otimes_{\Bbbk} \mathbb{O}'_{\nu} \otimes_{\mathcal{P}'_{\nu}} W &\ni (v_1 \otimes \cdots \otimes v_n) \otimes (X_i - X(\nu_i))^{k_i}e(\nu) \otimes w \\
      &= (v_1 \otimes \cdots \otimes v_n) \otimes e(\nu) \otimes X(\nu_i)^{k_i} x_i^{k_i} e(\nu).w \\
      &= 0.
   \end{align*}
   Hence \(\mathcal{F}_{\epsilon,\beta}^{P}(W)\) is finite-dimensional.
   The case of \(\mathcal{F}_{\epsilon,\beta}^{\mathsf{tw} P}\) proceeds similarly: for sufficiently large \(k_i\),
   \begin{align*}
      \bigoplus_{\nu \in J^{\mathsf{tw} \beta}} \mathcal{W}_{1,\epsilon}^{\otimes n} \otimes_{\Bbbk} \mathbb{O}_{\nu}^{\mathsf{tw}} \otimes_{\mathcal{P}_{\nu}^{\mathsf{tw}}} W &\ni (v_1 \otimes \cdots \otimes v_n) \otimes (X_i - X(\nu_i))^{k_i}e(\nu) \otimes w \\
      &= (v_1 \otimes \cdots \otimes v_n) \otimes (X_i + X(\nu_i))^{N_i}e(\nu) \otimes  x_i^{k_i} e(\nu).w \\
      &= 0.
   \end{align*}
   Hence \(\mathcal{F}_{\epsilon,\beta}^{\mathsf{tw} P}(W)\) is finite-dimensional.
\end{proof}
\begin{lem}[{\cite[Proposition 3.7]{KKK}}]\label{lem:flatness lemma}
   Let \(A \to B\) be an algebra homomorphism satisfying:
   \begin{enumerate}[(a)]
      \item \(B\) is finitely generated and projective as an \(A\)-module;
      \item \(\Hom_{A}(B,A)\) is projective as a \(B\)-module;
      \item the global dimension of \(B\) is finite.
   \end{enumerate}
   Then any \(B\)-module that is flat as an \(A\)-module is also flat as a \(B\)-module.
\end{lem}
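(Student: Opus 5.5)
The plan is to show directly that \(\operatorname{Tor}^{B}_{k}(N,M)=0\) for every \(k \geq 1\) and every right \(B\)-module \(N\), where \(M\) is a left \(B\)-module that is flat over \(A\). Since the global dimension \(d\) of \(B\) is finite by (c), these groups already vanish for \(k>d\). So it suffices to run a \emph{descending} induction on \(k\). The induction step will be a dimension-shift along an embedding of \(N\) into a right \(B\)-module that is acyclic for \(-\otimes_B M\).

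\textbf{Step 1 (induced modules are acyclic).} For a right \(A\)-module \(L\), I will show \(\operatorname{Tor}^{B}_{k}(L\otimes_A B, M)=0\) for \(k\geq 1\). Take a projective resolution \(P_\bullet \to M\) of left \(B\)-modules. By (a), \(B\) is projective over \(A\), so each \(P_i\) is also projective as an \(A\)-module, and \(P_\bullet\) is an \(A\)-projective resolution of \(M\). Since \((L\otimes_A B)\otimes_B P_\bullet \simeq L\otimes_A P_\bullet\), we get \(\operatorname{Tor}^B_k(L\otimes_A B,M)\simeq \operatorname{Tor}^A_k(L,M)\), which vanishes because \(M\) is \(A\)-flat. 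Direct summands of finite direct sums of such modules are then acyclic as well.

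\textbf{Step 2 (the coinduced embedding).} For a right \(B\)-module \(N\), consider the right \(B\)-module \(C(N)=\Hom_A(B,N)\) and the map \(N\to C(N)\), \(n\mapsto(b\mapsto nb)\). This map is \(B\)-linear and injective, since evaluating at \(b=1\) recovers \(n\). Because \(B\) is finitely generated projective over \(A\) by (a), there is a natural isomorphism \(\Hom_A(B,N)\simeq N\otimes_A\Hom_A(B,A)\) of right \(B\)-modules. By (b), \(\Hom_A(B,A)\) is a projective \(B\)-module, hence a direct summand of some \(B^{\oplus m}\) (take \(m\) possibly infinite if needed; Tor commutes with direct sums). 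Therefore \(C(N)\) is a direct summand of \((N\otimes_A B)^{\oplus m}\), and Step 1 gives \(\operatorname{Tor}^B_k(C(N),M)=0\) for \(k\geq1\).

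\textbf{Step 3 (descending induction).} Assume \(\operatorname{Tor}^B_{j}(-,M)=0\) on all right \(B\)-modules for every \(j>k\), where \(k \geq 1\). Apply the long exact sequence to \(0\to N\to C(N)\to C(N)/N\to 0\). This gives \(\operatorname{Tor}^B_{k+1}(C(N)/N,M)\to\operatorname{Tor}^B_k(N,M)\to\operatorname{Tor}^B_k(C(N),M)\). The outer terms vanish, the first by the induction hypothesis and the last by Step 2. Hence \(\operatorname{Tor}^B_k(N,M)=0\), and the induction descends from \(k=d+1\) down to \(k=1\), proving flatness.

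The main obstacle I expect is Step 2: checking carefully that the isomorphism \(\Hom_A(B,N)\simeq N\otimes_A \Hom_A(B,A)\) is compatible with the right \(B\)-actions. This requires keeping track of which side of \(B\) is used as an \(A\)-module in (a) and (b). I would first verify it for \(B\) free of finite rank over \(A\) and then pass to direct summands.
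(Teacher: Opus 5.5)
\textbf{Gap in Step 2.} The step that fails is ``therefore \(C(N)\) is a direct summand of \((N\otimes_A B)^{\oplus m}\).'' To push a splitting through \(N\otimes_A(-)\), it must be a splitting of \((A,B)\)-bimodules. On \(B^{\vee}=\Hom_A(B,A)\), \(A\) acts through the target copy of \(A\); on \(B^{\oplus m}\) it acts by left multiplication through \(\iota\colon A\to B\). Hypothesis (b) only gives a splitting of right \(B\)-modules. The left \(A\)-action on \(B^{\vee}\) is by right \(B\)-endomorphisms, so after choosing \(B^{\vee}\oplus Q\simeq B^{\oplus m}\) it is given by some homomorphism \(A\to\End_B(B^{\oplus m})\). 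That homomorphism need not be the diagonal one coming from \(\iota\).

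For instance, suppose \(B^{\vee}\simeq B\) as right \(B\)-modules. Then the transported action is left multiplication through some other homomorphism \(\theta\colon A\to B\), so \(C(N)\simeq N\otimes_{A,\theta}B\). Applying Step 1 to this module would require \(M\) to be flat over \(A\) via \(\theta\), which is not your hypothesis. The two actions do agree when \(A\) is commutative and \(\iota(A)\) is central in \(B\). They do not agree in the intended application \(\mathcal{P}'_{\beta}\subset\mathcal{A}'_{\beta}\subset RC_{\beta}\). The compatibility you flagged, right \(B\)-linearity of \(\Hom_A(B,N)\simeq N\otimes_A B^{\vee}\), is actually harmless; the summand claim is the real problem.

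\textbf{How to fix it.} The paper gives no proof and defers to \cite[Proposition 3.7]{KKK}. The standard argument there dimension-shifts in the \emph{second} variable, where one-sided projectivity suffices.
\begin{enumerate}[(1)]
   \item Read (a) and (b) with \(B\) as a left \(A\)-module, and with \(B^{\vee}=\Hom_A(B,A)\) (left \(A\)-linear maps) as a left \(B\)-module via \((b\varphi)(b')=\varphi(b'b)\).
   \item For an \(A\)-flat left \(B\)-module \(M\), the map \(M\to\Hom_A(B,M)\simeq B^{\vee}\otimes_A M\), \(m\mapsto(b\mapsto bm)\), is \(B\)-linear and has the \(A\)-linear retraction \(f\mapsto f(1)\).
   \item By (a) and (b), \(B^{\vee}\) is \(A\)-projective, so \(B^{\vee}\otimes_A M\) is \(A\)-flat. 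Since the sequence is \(A\)-split, the cokernel \(M''\) is also \(A\)-flat.
   \item For a projective resolution \(P_\bullet\to N\), we have \(P_\bullet\otimes_B(B^{\vee}\otimes_A M)\simeq(P_\bullet\otimes_B B^{\vee})\otimes_A M\). This complex has no higher homology, because \(B^{\vee}\) is \(B\)-flat and \(M\) is \(A\)-flat. Hence \(\mathrm{Tor}^B_k(N,B^{\vee}\otimes_A M)=0\) for \(k\geq 1\).
   \item Run your Step 3 as a descending induction simultaneously over all \(A\)-flat \(B\)-modules, using the exact sequence \(\mathrm{Tor}^B_{k+1}(N,M'')\to\mathrm{Tor}^B_k(N,M)\to\mathrm{Tor}^B_k(N,B^{\vee}\otimes_A M)\).
\end{enumerate}
This finishes the proof, and your Step 1 is no longer needed.
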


\begin{thm}\label{thm:exactness of SW functor for queer}
   The functors \(\mathcal{F}_{\epsilon,\beta}^{P}\) and \(\mathcal{F}_{\epsilon,\beta}^{\mathsf{tw} P}\) are exact.
\end{thm}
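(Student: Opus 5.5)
Exactness of \(\mathcal{F}_{\epsilon,\beta}^{P} = V'^{\otimes \beta}_{\mathbb{O}} \otimes_{RC_{\beta}} -\) amounts to flatness of \(V'^{\otimes \beta}_{\mathbb{O}}\) as a right \(RC_{\beta}\)-supermodule, which by the discussion of flat supermodules in Section \ref{sec:preliminaries} is the same as flatness over \(|RC_{\beta}|\). We will get this from Lemma \ref{lem:flatness lemma}, applied to \(A = \mathcal{P}'_{\beta}\) and \(B = RC_{\beta}\) (used as right modules, or via the antiautomorphism \(\psi\) as left modules). The twisted case \(\mathcal{F}_{\epsilon,\beta}^{\mathsf{tw} P}\) is identical after replacing \(\mathcal{P}'_{\beta}, \mathbb{O}'_{\beta}\) by \(\mathcal{P}^{\mathsf{tw}}_{\beta}, \mathbb{O}^{\mathsf{tw}}_{\beta}\).

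The first step is to show that \(V'^{\otimes \beta}_{\mathbb{O}}\) is flat over \(A = \mathcal{P}'_{\beta} = \bigoplus_{\nu} \Bbbk[x_1,\ldots,x_n]e(\nu)\). As in the proof of the preceding proposition,
\[V'^{\otimes \beta}_{\mathbb{O}} \simeq \bigoplus_{\nu \in J^{\beta}} \mathcal{W}_{1,\epsilon}^{\otimes n} \otimes_{\Bbbk} \mathbb{O}'_{\nu}.\]
Each \(\mathbb{O}'_{\nu}\) is the completion of the polynomial ring \(\mathcal{P}'_{\nu}\) at a maximal ideal, since \(x_i e(\nu)\) corresponds to a unit multiple of \(X_i - X(\nu_i)\), or to \((X_i - X(\nu_i))/(X_i + X(\nu_i))\) in the twisted case, which is again a uniformizer. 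A completion of a Noetherian ring is flat, so this step is standard.

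One point needs care: \(x_i\) acts on \(V'^{\otimes\beta}_{\mathbb{O}}e(\nu)\) with a sign \((-1)^{\pi_2(\nu_i)}\) (by the Remark after Lemma \ref{lem:construction of right action of untwisted version}). This only composes the module structure with the automorphism \(x_i \mapsto -x_i\) of \(\Bbbk[x]e(\nu)\), and flatness is preserved under this twist.

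Next we verify (a)–(c) for \(A \to B\). For (a), Remark \ref{rem:PBW for quiver Hecke--Clifford} shows that \(RC_{\beta}\) is free over \(\mathcal{P}'_{\beta}\) with finite basis \(\{\c^{\eta}\sigma_w e(\nu)\}\), on either side, using (RC3)–(RC7) to move the \(x_i\) past the basis elements up to lower terms. For (b) and (c), we transport the corresponding facts for \(R_{\beta}\) from \cite[Proposition 3.7 and its proof]{KKK} through the Morita superequivalence of Theorem \ref{thm:super morita equivariant for quiver Hecke--Clifford superalgebra}. Global dimension is Morita invariant, so \(\mathrm{gldim}\, RC_{\beta} = \mathrm{gldim}\, R_{\beta} < \infty\) (for \(R_{\beta}\) with \(\Bbbk\) a field this is known, e.g. via \cite{KKK}). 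For (b), \(\Hom_{A}(B,A)\) is the coinduced-type module. We would show that it is isomorphic to \(B\) up to a grading shift, which is the analogue of the Frobenius property behind Lemma \ref{lem:ind and coind are isom}, with \(\mathcal{P}'_{\beta}\) playing the role of the smaller algebra. The isomorphism sends \(b\) to the functional "coefficient of \(\sigma_{w_0}\c_1\cdots\c_n\)", with the nondegeneracy checked via \(\iota\) into \(\mathcal{KS}_{\beta}\).

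The main obstacle will be (b): making the \(\mathcal{P}'_{\beta}\)-dual of \(RC_{\beta}\) projective over \(RC_{\beta}\) with the Clifford generators and signs. If the direct Frobenius argument is messy, the fallback is to apply Lemma \ref{lem:flatness lemma} instead to \(A = \mathcal{P}_{\beta} \to R_{\beta} \simeq RC^{\dagger}_{\beta}\), exactly as in \cite{KKK, KL4}. This gives flatness of \(V'^{\otimes\beta}_{\mathbb{O}}e^{\dagger}(\beta)\) over \(RC^{\dagger}_{\beta}\); note that the \(\nu^{\dagger}\)-part of \(V'^{\otimes \beta}_{\mathbb{O}}\) is the Kwon–Lee bimodule. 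Since \(e^{\dagger}(\beta)\) is a full even idempotent (Example \ref{ex:full even idempotent}), we then have
\[V'^{\otimes\beta}_{\mathbb{O}} \otimes_{RC_{\beta}} M \simeq V'^{\otimes\beta}_{\mathbb{O}}e^{\dagger}(\beta) \otimes_{RC^{\dagger}_{\beta}} e^{\dagger}(\beta)M.\]
Here we use \(V'^{\otimes\beta}_{\mathbb{O}} \simeq V'^{\otimes\beta}_{\mathbb{O}}e^{\dagger}(\beta)\otimes_{RC^{\dagger}_{\beta}} e^{\dagger}(\beta)RC_{\beta}\), which holds because \(RC_{\beta}e^{\dagger}(\beta)RC_{\beta} = RC_{\beta}\) and the \(\c_i\) act invertibly. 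So \(\mathcal{F}^{P}_{\epsilon,\beta}\) is the composite of the exact functor \(F_{\beta}\) with the exact Kwon–Lee-type functor, hence exact.
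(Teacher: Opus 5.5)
Your proof is correct, but the argument that actually closes is your fallback. It runs along a different line from the paper's proof.

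\emph{The paper's route.} The paper does not apply Lemma \ref{lem:flatness lemma} once to $\mathcal{P}'_\beta \to RC_\beta$. It applies it twice, along $\mathcal{P}'_\beta \hookrightarrow \mathcal{A}'_\beta \hookrightarrow RC_\beta$. The point of the factorization is that condition (b) then never needs the explicit Frobenius form you leave unchecked:
\begin{itemize}
\item[(1)] At the first step, $\mathcal{A}'_\beta \simeq \mathcal{P}'_\beta \otimes \mathcal{C}_n$, and its $\mathcal{P}'_\beta$-dual is computed by hand.
\item[(2)] At the second step, $\HOM_{\mathcal{A}'_\beta}(RC_\beta,\mathcal{A}'_\beta)$ is identified with $RC_\beta$ block by block over the $e'(\nu)$. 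This uses Frobenius reciprocity and the $\Ind \simeq \Coind$ isomorphism of Lemma \ref{lem:ind and coind are isom}.
\item[(3)] Finite global dimension comes from the full idempotent $e^{\dagger}(\beta)$ for $\mathcal{A}'_\beta$, and from the Morita equivalence with $R_\beta$ for $RC_\beta$.
\end{itemize}

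\emph{Your primary route.} Here (b) remains a claim. The pairing ``coefficient of $\sigma_{w_0}\c_1\cdots\c_n$'' is plausible, but it is only twisted-linear on the right, because $\c_i$ anticommutes with $x_i$. Its nondegeneracy would need an actual triangularity argument.

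\emph{Your fallback.} It uses Morita invariance of flatness once and for all.
\begin{itemize}
\item[(1)] Since $e^{\dagger}(\beta)$ is a full even idempotent, $V'^{\otimes\beta}_{\mathbb{O}}\otimes_{RC_\beta}M \simeq V'^{\otimes\beta}_{\mathbb{O}}e^{\dagger}(\beta)\otimes_{RC^{\dagger}_\beta}e^{\dagger}(\beta)M$. Fullness alone gives this; the invertibility of the $\c_i$ is not needed.
\item[(2)] The KKK verification of (a)--(c) for $\mathcal{P}_\beta \to R_\beta$ then applies verbatim. It only requires $V'^{\otimes\beta}_{\mathbb{O}}e^{\dagger}(\beta)$ to be flat over $\mathcal{P}_\beta$, which your completion argument supplies.
\item[(3)] The sign twist $(-1)^{\pi_2(\nu_i)}$ does not even occur on this summand.
\end{itemize}
This is shorter, and it avoids redoing (b) and the global-dimension argument for the Clifford-enlarged algebras. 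The Morita superequivalence absorbs all the Clifford bookkeeping.

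\emph{What each buys.} The paper's route stays inside $RC_\beta$ and gives a genuine super analogue of KKK's Proposition 3.7 setup, with hypotheses (a)--(c) verified along $\mathcal{P}'_\beta \hookrightarrow \mathcal{A}'_\beta \hookrightarrow RC_\beta$. Your route instead transports KKK's result for $R_\beta$ through the KKT isomorphism $R_\beta \simeq RC^{\dagger}_\beta$.

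\emph{Twisted case.} $V^{\mathsf{tw}\otimes\beta}_{\mathbb{O}}e^{\dagger}(\beta)$ is not literally the Kwon--Lee module, because the $x_i$ act by $(X_i - X(\nu_i))/(X_i + X(\nu_i))$. So you should run the flatness lemma for $\mathcal{P}_\beta \to R_\beta$ rather than cite Kwon--Lee's exactness theorem. Your uniformizer remark already gives the flatness over $\mathcal{P}_\beta$ this requires.
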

\begin{proof}
   We establish the claim for \(\mathcal{F}_{\epsilon}^{P}\).
   Consider the subsuperalgebras \(\mathcal{P}'_{\beta}\) and \(\mathcal{A}'_{\beta} = \langle e(\nu), x_i, \c_i \mid \nu \in J^{\beta}, 1 \leq i \leq n \rangle\) of \(RC_{\beta}\).
   We verify conditions (a), (b), and (c) of Lemma \ref{lem:flatness lemma} for each of the inclusions \(\mathcal{P}'_{\beta} \hookrightarrow \mathcal{A}'_{\beta} \hookrightarrow RC_{\beta}\).
   \begin{enumerate}[(i)]
      \item Case \(\mathcal{P}'_{\beta} \hookrightarrow \mathcal{A}'_{\beta}\):
      \begin{enumerate}[(a)]
         \item Since
         \[\{x_{1}^{a_1} \cdots x_{n}^{a_n}e(\nu)\c_{1}^{\eta_1} \cdots \c_{n}^{\eta_n} \mid a_i \in \mathbb{Z}_{\geq 0},~\eta_i \in \mathbb{Z}/2\mathbb{Z},~\nu \in J^{\beta}\}\]
         is a basis of \(\mathcal{A}'_{\beta}\), the \(\mathcal{P}'_{\beta}\)-module decomposition
         \[\mathcal{A}'_{\beta} = \bigoplus_{\eta \in \mathbb{Z}/2\mathbb{Z}^n} \mathcal{P}'_{\beta}\c_{1}^{\eta_1} \cdots \c_{n}^{\eta_n}\]
         holds. Hence \(\mathcal{A}'_{\beta}\) is finitely generated and projective over \(\mathcal{P}'_{\beta}\).
         \item From the chain of \(\mathcal{P}'_{\beta}\)-module isomorphisms
         \begin{align*}
            \HOM_{\mathcal{P}'_{\beta}}(\mathcal{A}'_{\beta},\mathcal{P}'_{\beta}) &\simeq \HOM_{\mathcal{P}'_{\beta}}(\mathcal{P}'_{\beta} \otimes_{\Bbbk} \mathcal{C}_{n},\mathcal{P}'_{\beta}) \\
            &\simeq \HOM_{\mathcal{P}'_{\beta}}(\mathcal{P}'_{\beta} , \Hom_{\Bbbk}(\mathcal{C}_{n},\mathcal{P}'_{\beta})) \\
            &\simeq \Hom_{\Bbbk}(\mathcal{C}_{n},\mathcal{P}'_{\beta}) \\
            &\simeq \mathcal{C}_{n}^{*} \otimes \mathcal{P}'_{\beta} \\
            &\simeq \mathcal{C}_{n} \otimes \mathcal{P}'_{\beta} \simeq \mathcal{A}'_{n},
         \end{align*}
         \(\HOM_{\mathcal{P}'_{\beta}}(\mathcal{A}'_{\beta},\mathcal{P}'_{\beta})\) is projective as an \(\mathcal{A}'_{\beta}\)-module.
         \item The idempotent
         \[e^{\dagger}(\beta) = \sum_{i,\pi_2(\nu_i) = 0} e(\nu) \in \mathcal{A}_{\beta}\]
         is a full even idempotent of \(\mathcal{A}'_{\beta}\); by Example \ref{ex:full even idempotent}, \(\mathcal{A}'_{\beta}\) and \(e^{\dagger}(\beta)\mathcal{A}'_{\beta}e^{\dagger}(\beta)\) are therefore Morita superequivalent.
         By the basis theorem for \(\mathcal{A}'_{\beta}\),
         \[e^{\dagger}(\beta)\mathcal{A}'_{\beta}e^{\dagger}(\beta) = \mathcal{P}'_{\beta}.\]
         Since \(\mathcal{P}'_{\beta}\) is a finite direct product of polynomial rings in finitely many variables, its global dimension is finite by Hilbert's syzygy theorem.
         Consequently, the global dimension of \(\mathcal{A}'_{\beta}\) is finite.
      \end{enumerate}
      \item Case \(\mathcal{A}'_{\beta} \hookrightarrow RC_{\beta}\):
      \begin{enumerate}[(a)]
         \item By Remark \ref{rem:PBW for quiver Hecke--Clifford},
         \[\{x_1^{a_1} \cdots x_{n}^{a_n}\c_{1}^{\eta_1} \cdots \c_{n}^{\eta_n}e(\nu)\sigma_{w} \mid a_i \in \mathbb{Z}_{\geq 0},~\eta_i \in \mathbb{Z}/2\mathbb{Z},~\nu \in J^{\beta},~w \in \mathfrak{S}_{n}\}\]
         forms a basis of \(RC_{\beta}\), giving the \(\mathcal{A}'_{\beta}\)-module decomposition
         \[RC_{\beta} = \bigoplus_{w \in \mathfrak{S}_{n}}\mathcal{A}'_{\beta}\sigma_{w}.\]
         Hence \(RC_{\beta}\) is finitely generated and projective over \(\mathcal{A}'_{\beta}\).
         \item For \(\nu \in J^{\beta}\), define the central idempotent \(e'(\nu)\) of \(\mathcal{A}'_{\beta}\) by
         \[e'(\nu) = \sum_{\pi_1(\mu) = \nu} e(\mu),\]
         where \(\pi_1(\mu) = (\pi_1(\mu_1),\ldots,\pi_1(\mu_n)) \in J^{\beta}\).
         The identity
         \[1 = \sum_{\nu \in J^{\beta}}e'(\nu) \in \mathcal{A}'_{\beta}\]
         yields a central idempotent decomposition of \(\mathcal{A}'_{\beta}\); accordingly, any \(\mathcal{A}'_{\beta}\)-module \(M\) decomposes as
         \[M = \bigoplus_{\nu \in J^{\beta}}e'(\nu)M.\]
         This yields the following isomorphisms of \(\mathcal{A}'_{\beta}\)-modules:
         \begin{align*}
            \HOM_{\mathcal{A}'_{\beta}}(RC_{\beta},\mathcal{A}'_{\beta}) &= \bigoplus_{\nu \in J^{\beta}} \HOM_{\mathcal{A}'_{\beta}}(e'(\nu)RC_{\beta},e'(\nu)\mathcal{A}'_{\beta}) \\
            &= \bigoplus_{\nu \in J^{\beta}} \HOM_{\mathcal{A}'_{\beta} e'(\nu)}(e'(\nu)RC_{\beta},e'(\nu)\mathcal{A}'_{\beta}).
         \end{align*}
         The block \(\mathcal{A}'_{\beta}e'(\nu) = e'(\nu)\mathcal{A}'_{\beta}\) associated with \(e'(\nu)\) is isomorphic, as a superalgebra, to \(RC_{\alpha_{\nu_1}} \otimes \cdots \otimes RC_{\alpha_{\nu_n}}\).
         Repeated application of Frobenius reciprocity (Proposition \ref{prop:hom-tensor duality for quiver Hecke--Clifford superalgebra}) together with Lemma \ref{lem:ind and coind are isom} then yields the following isomorphisms (ignoring \(\mathbb{Z}\)-degree):
         \begin{align*}
            &\HOM_{\mathcal{A}'_{\beta} e'(\nu)}(e'(\nu)RC_{\beta},e'(\nu)\mathcal{A}'_{\beta}) \\
            &= \HOM_{RC_{\alpha_{\nu_1}} \otimes \cdots \otimes RC_{\alpha_{\nu_l}}}(\Res_{\nu_1,\ldots,\nu_n}^{\beta} RC_{\beta}, RC_{\alpha_{\nu_1}} \otimes \cdots \otimes RC_{\alpha_{\nu_l}}) \\
            &\simeq \HOM_{RC_{\beta}}(RC_{\beta},\Coind_{\nu_1,\ldots,\nu_n}^{\beta} RC_{\alpha_{\nu_1}} \otimes \cdots \otimes RC_{\alpha_{\nu_l}}) \\
            &\simeq \HOM_{RC_{\beta}}(RC_{\beta},\Ind_{\nu_n,\ldots,\nu_1}^{\beta} RC_{\alpha_{\nu_n}} \otimes \cdots \otimes RC_{\alpha_{\nu_1}}) \\
            &\simeq RC_{\alpha_{\nu_n}} \circ \cdots \circ RC_{\alpha_{\nu_1}}.
         \end{align*}
         Here we have used the fact that \(RC_{\beta} \in RC_{\beta} \sgMod_{\mathsf{L}}\), which is a consequence of Remark \ref{rem:PBW for quiver Hecke--Clifford}.
         Therefore,
         \begin{align*}
            \HOM_{\mathcal{A}'_{\beta} e'(\nu)}(e'(\nu)RC_{\beta},e'(\nu)\mathcal{A}'_{\beta})
            &\simeq \bigoplus_{\nu \in J^{\beta}} RC_{\alpha_{\nu_n}} \circ \cdots \circ RC_{\alpha_{\nu_1}} \\
            &\simeq \bigoplus_{\nu \in J^{\beta}} RC_{\beta}e(\nu_n,\ldots,\nu_1) \\
            &\simeq RC_{\beta}.
         \end{align*}
         Hence \(\Hom_{\mathcal{A}'_{\beta}}(RC_{\beta},\mathcal{A}'_{\beta})\) is projective as an \(RC_{\beta}\)-module.
         \item
         By the Morita equivalence with \(R_{\beta}\) (Theorem \ref{thm:super morita equivariant for quiver Hecke--Clifford superalgebra}), it suffices to verify that the global dimension of \(R_{\beta}\) is finite, which is established in \cite[Corollary 2.9]{K2}, \cite[Theorem 4.6]{KL1}, etc.
      \end{enumerate}
   \end{enumerate}
   Lemma \ref{lem:flatness lemma} now implies that any right \(RC_{\beta}\)-module that is flat as a right \(\mathcal{P}'_{\beta}\)-module is also flat as a right \(RC_{\beta}\)-module.
   Since \(\mathbb{O}'_{\beta}\) is flat as a right \(\mathcal{P}'_{\beta}\)-module by virtue of the flatness of the completion of a Noetherian ring,
   \[V_{\mathbb{O}}'^{\otimes \beta} = \mathbb{O}'_{\beta} \otimes \mathcal{W}_{1,\epsilon}^{\otimes n} \simeq \mathbb{O}_{\beta}'^{\oplus (M+N-1)^n}\]
   is flat as a right \(\mathcal{P}'_{\beta}\)-module as well.
   Hence \(V_{\mathbb{O}}'^{\otimes \beta}\) is flat as a right \(RC_{\beta}\)-module, which entails the exactness of \(\mathcal{F}_{\epsilon}^{P}\).
   The same conclusion holds for \(\mathcal{F}_{\epsilon}^{\mathsf{tw} P}\).
\end{proof}

\begin{ex}\label{ex:segment module}
   The length-\(1\) segment representation \(L(a) = \mathcal{C}_1.e(a,0) \in RC_{\alpha_a} \sgmod ~(a \in \mathbb{Z})\) of the quiver Hecke--Clifford superalgebra of type \(A_{\infty}\) is defined by
   \begin{align*}
      x_i.\c^j e(a,0) &= 0,~\sigma_k.\c^j e(a,0) = 0,~\c_1.\c^j e(a,0) = c^{j+1} e(a,0), \\
      e(a,\varepsilon).\c^j e(a,0) &= \delta_{\varepsilon + j,0}\c^j e(a,b).
   \end{align*}
   Its image under the Schur--Weyl functor is computed below.
   Setting \(\beta = \alpha_a\), we have
   \begin{align*}
      V_{\mathbb{O}}'^{\otimes \beta} &= \Bbbk[[z - q^{-2a}]]\otimes_{\Bbbk[z^{\pm 1}]}(\mathcal{W}_{1,\epsilon})_{\text{aff}}.e(a,0) \oplus \Bbbk[[z - q^{-2a}]]\otimes_{\Bbbk[z^{\pm 1}]}(\mathcal{W}_{1,\epsilon})_{\text{aff}}.e(a,1), \\
      V_{\mathbb{O}}^{\mathsf{tw} \otimes \beta} &= \Bbbk[[z - q^{-2a}]]\otimes_{\Bbbk[z^{\pm 1}]}(\mathcal{W}_{1,\epsilon})_{\text{aff}}.e(a,0) \oplus \Bbbk[[z - q^{2a}]]\otimes_{\Bbbk[z^{\pm 1}]}(\mathcal{W}_{1,\epsilon})_{\text{aff}}.e(a,1).
   \end{align*}
   Since \(RC_{\beta} = \langle x, \c, e(a,0), e(a,1) \rangle\), defining the subalgebra \(C_{\beta}\) by
   \[C_{\beta} = \langle \c,e(a,0),e(a,1)\rangle,\]
   we have \(L(a) = C_{\beta}.e(a,0)\), and the following isomorphisms of super vector spaces hold:
   \begin{align*}
      &\mathcal{F}_{\epsilon,\beta}^{P}(L(a)) \\
      &= (\Bbbk[[z - q^{-2a}]]\otimes_{\Bbbk[z^{\pm 1}]}(\mathcal{W}_{1,\epsilon})_{\text{aff}}.e(a,0) \\
      &\oplus \Bbbk[[z - q^{-2a}]]\otimes_{\Bbbk[z^{\pm 1}]} (\mathcal{W}_{1,\epsilon})_{\text{aff}}.e(a,1)) \otimes_{RC_{\beta}} L(a) \\
      &\simeq (\mathcal{W}_{1,\epsilon}.e(a,0) \oplus \mathcal{W}_{1,\epsilon}.e(a,1)) \otimes_{C_{\beta}} C_{\beta}.e(a,0) \\
      &\simeq \mathcal{W}_{1,\epsilon}, \\
      &\mathcal{F}_{\epsilon,\beta}^{\mathsf{tw} P}(L(a)) \\
      &= (\Bbbk[[z - q^{-2a}]]\otimes_{\Bbbk[z^{\pm 1}]}(\mathcal{W}_{1,\epsilon})_{\text{aff}}.e(a,0) \\
      &\oplus \Bbbk[[z - q^{2a}]]\otimes_{\Bbbk[z^{\pm 1}]}(\mathcal{W}_{1,\epsilon})_{\text{aff}}.e(a,1)) \otimes_{RC_{\beta}} L(a) \\
      &\simeq (\mathcal{W}_{1,\epsilon}.e(a,0) \oplus \mathcal{W}_{1,\epsilon}.e(a,1)) \otimes_{C_{\beta}} C_{\beta}.e(a,0)\\
      &\simeq \mathcal{W}_{1,\epsilon}.
   \end{align*}
\end{ex}

\subsection*{Acknowledgements}
The author is deeply grateful to his supervisor, Hironori Oya, for his devoted and sustained support and for numerous invaluable comments.
The author also wishes to thank Katsuyuki Naoi for providing various lectures on quiver Hecke algebras, the insights from which are reflected in the content of Section \ref{sec:schur--weyl duality via quiver hecke--clifford superalgebra}.
The author is further indebted to Hideya Watanabe for helpful comments on this work.


\begin{thebibliography}{99}
   \bibitem{BK}Brundan, J.; Kleshchev, A.:
   {\em Blocks of cyclotomic Hecke algebras and Khovanov-Lauda algebras}.
   Invent. Math. \textbf{178} (2009), no. 3, 451--484.
   \bibitem{BE}Brundan, J.; Ellis, A. P.:
   {\em Monoidal supercategories}.
   Comm. Math. Phys. \textbf{351} (2017), no. 3, 1045--1089.
   \bibitem{CG}Chen, H.; Guay, N.:
   {\em Twisted affine Lie superalgebra of type $Q$ and quantization of its enveloping superalgebra}.
   Math. Z. \textbf{272} (2012), no. 1--2, 317--347.
   \bibitem{CW}Cheng, S.-J.; Wang, W.:
   {\em Dualities and representations of Lie superalgebras}.
   Grad. Stud. Math., 144
   American Mathematical Society, Providence, RI, 2012. xviii+302 pp.
   \bibitem{GS}Gorelik, Maria; Serganova, Vera:
   {\em On representations of the affine superalgebra $\mathfrak{q}(n)_2$}.
   Mosc. Math. J. \textbf{8} (2008), no. 1, 91--109.
   \bibitem{GJKK}Grantcharov, D.; Jung, J. H.; Kang, S.-J.; Kim, M.:
   {\em Highest weight modules over quantum queer superalgebra $U_{q}(\mathfrak{q}_n)$}.
   Comm. Math. Phys. \textbf{296} (2010), no. 3, 827--860.
   \bibitem{HS}Hoyt, C.; Serganova, V.:
   {\em Classification of finite-growth general Kac-Moody superalgebras}.
   Comm. Algebra \textbf{35} (2007), no. 3, 851--874.
   \bibitem{K1}Kac, V. G.:
   {\em Lie superalgebras}.
   Advances in Math. \textbf{26} (1977), no. 1, 8--96.
   \bibitem{KKK}Kang, S.-J.; Kashiwara, M.; Kim, M.:
   {\em Symmetric quiver Hecke algebras and R-matrices of quantum affine algebras}.
   Invent. Math. \textbf{211} (2018), no. 2, 591--685.
   \bibitem{KKKO1}Kang, S.-J.; Kashiwara, M.; Kim, M.; Oh, S.-J.:
   {\em Simplicity of heads and socles of tensor products}.
   Compos. Math. \textbf{151} (2015), no. 2, 377--396.
   \bibitem{KKKO2}Kang, S.-J.; Kashiwara, M.; Kim, M.; Oh, S.-J.:
   {\em Monoidal categorification of cluster algebras}.
   J. Amer. Math. Soc. \textbf{31} (2018), no. 2, 349--426.
   \bibitem{KKO1}Kang, S.-J.; Kashiwara, M.; Oh, S.-j.:
   {\em Supercategorification of quantum Kac-Moody algebras}.
   Adv. Math. \textbf{242} (2013), 116--162.
   \bibitem{KKO2}Kang, S.-J.; Kashiwara, M.; Oh, S.-j.:
   {\em Supercategorification of quantum Kac-Moody algebras II}.
   Adv. Math. \textbf{265} (2014), 169--240.
   \bibitem{KKT}Kang, S.-J.; Kashiwara, M.; Tsuchioka, S.:
   {\em Quiver Hecke superalgebras}.
   J. Reine Angew. Math. \textbf{711} (2016), 1--54.
   \bibitem{KP}Kashiwara, M.; Park, E.:
   {\em Affinizations and R-matrices for quiver Hecke algebras}.
   J. Eur. Math. Soc. (JEMS) \textbf{20} (2018), no. 5, 1161--1193.
   \bibitem{K2}Kato, S.:
   {\em Poincaré--Birkhoff--Witt bases and Khovanov--Lauda--Rouquier algebras}.
   Duke Math. J. \textbf{163} (2014), no. 3, 619--663.
   \bibitem{KL1}Khovanov, M.; Lauda, A. D.:
   {\em A diagrammatic approach to categorification of quantum groups I}.
   Represent. Theory \textbf{13} (2009), 309--347.
   \bibitem{KL2}Khovanov, M.; Lauda, A. D.:
   {\em A diagrammatic approach to categorification of quantum groups II.}
   Trans. Amer. Math. Soc. \textbf{363} (2011), no. 5, 2685--2700.
   \bibitem{K2}Kleshchev, A.:
   {\em Linear and projective representations of symmetric groups}.
   Cambridge Tracts in Math., 163.
   Cambridge University Press, Cambridge, 2005. xiv+277 pp.
   \bibitem{KL3}Kleshchev, A. S.; Livesey, M.:
   {\em RoCK blocks for double covers of symmetric groups and quiver Hecke superalgebras}.
   Mem. Amer. Math. Soc. \textbf{309} (2025), no. 1564, v+182 pp.
   \bibitem{KOS}Kuniba, A.; Okado, M.; Sergeev, S.:
   {\em Tetrahedron equation and generalized quantum groups}.
   J. Phys. A \textbf{48} (2015), no. 30, 304001, 38 pp.
   \bibitem{KL4}Kwon, J.-H., Lee, S.-M.:
   {\em Super duality for quantum affine algebras of type A}.
   Int. Math. Res. Not. IMRN 2022, no. 23, 18446--18525.
   \bibitem{KY}Kwon, J.-H.; Yu, J.:
   {\em R matrix for generalized quantum group of type A}.
   J. Algebra \textbf{566} (2021), 309--341.
   \bibitem{LV}Lauda, A. D.; Vazirani, M.:
   {\em Crystals from categorified quantum groups}.
   Adv. Math. \textbf{228} (2011), no. 2, 803--861.
   \bibitem{L}Lenzen, F.:
   {\em Clifford-symmetric polynomials}.
   Comm. Algebra \textbf{51} (2023), no. 9, 3981--4011.
   \bibitem{LMZ}Liu, M.; Molev, A.; Zhang, J.:
   {\em Central elements and evaluation map for the quantum queer superalgebras}.
   J. Math. Phys. \textbf{67} (2026), no. 4, Paper No. 041701.
   \bibitem{M1}Mathieu, O.:
   {\em Classification of simple graded Lie algebras of finite growth}.
   Invent. Math. \textbf{108} (1992), no. 3, 455--519.
   \bibitem{M2}McNamara, P. J.:
   {\em Representations of Khovanov-Lauda-Rouquier algebras III: symmetric affine type}.
   Math. Z. \textbf{287} (2017), no. 1-2, 243--286.
   \bibitem{O}Olshanski, G. I.:
   {\em Quantized universal enveloping superalgebra of type $Q$ and a super-extension of the Hecke algebra}.
   Lett. Math. Phys. \textbf{24} (1992), no. 2, 93--102.
   \bibitem{R}Rouquier, R.:
   {\em 2-Kac-Moody algebras},   arXiv:0812.5023v1.
   \bibitem{S}Sergeev, A. N.:
   {\em Tensor algebra of the identity representation as a module over the Lie superalgebras $\mathfrak{Gl}(n,m)$ and $Q(n)$}.
   Mat. Sb. (N.S.) \textbf{123(165)} (1984), no. 3, 422--430.
\end{thebibliography}
\end{document}